\newtheorem{Theorem}{Theorem}[section]
\newtheorem{Proposition}[Theorem]{Proposition}
\newtheorem{Lemma}[Theorem]{Lemma}
\newtheorem{Corollary}[Theorem]{Corollary}
\theoremstyle{definition}
\newtheorem{Definition}[Theorem]{Definition}
\newtheorem{Remark}[Theorem]{Remark}
\newcommand{\bProposition}[1]{
\begin{Proposition} \label{P#1}}
\newcommand{\eP}{\end{Proposition}}
\newcommand{\bLemma}[1]{
\begin{Lemma} \label{L#1} }
\newcommand{\eL}{\end{Lemma}}
\newcommand{\BTm}{\mathfrak{B}(\OTm)}
\newcommand{\BTN}{\mathfrak{B}}
\newcommand{\bTheorem}[1]{
\begin{Theorem} \label{T#1} }
\newcommand{\eT}{\end{Theorem}}
\newcommand{\OTp}{\Omega^{[T, \infty)}}
\newcommand{\OTm}{\Omega^{[0,T]}}
\newcommand{\OTN}{\Omega}
\newcommand{\bCorollary}[1]{
\begin{Corollary} \label{C#1} }
\newcommand{\eC}{\end{Corollary}}
\newcommand{\bRemark}[1]{
\begin{Remark} \label{R#1} }
\newcommand{\eR}{\end{Remark}}
\newcommand{\bDefinition}[1]{
\begin{Definition} \label{D#1} }
\newcommand{\eD}{\end{Definition}}
\newcommand{\dif}{\mathrm{d}}
\newcommand{\Del}{\Delta_x}
\newcommand{\mf}{\mathfrak{F}}
\newcommand{\mB}{\mathfrak{B}}
\newcommand{\prst}{\mathcal{P}}
\newcommand{\p}{\mathcal{P}}
\newcommand{\stred}{\mathbb{E}}
\newcommand{\mn}{\mathbb{N}}
\newcommand{\mt}{\mathbb{T}^N}
\newcommand{\Q}{\mathbb{T}^N}
\newcommand{\bq}{\mathbf q}
\newcommand{\ups}{\partial_t\bfxi^3}
\newcommand{\bfu}{\mathbf{u}}
\newcommand{\bfv}{\mathbf{v}}
\newcommand{\bfV}{\mathbf{V}}
\newcommand{\bfU}{\mathbf{U}}
\newcommand{\bfq}{\mathbf{q}}
\newcommand{\bfy}{\mathbf{y}}
\newcommand{\bfz}{\mathbf{z}}
\newcommand{\bfF}{\mathbf{F}}
\newcommand{\bfG}{\mathbf{G}}
\newcommand{\bfphi}{\boldsymbol{\varphi}}
\newcommand{\bfxi}{\boldsymbol{\xi}}
\newcommand{\bfvarphi}{\boldsymbol{\varphi}}
\newcommand{\bFormula}[1]{
\begin{equation} \label{#1}}
\newcommand{\eF}{\end{equation}}
\newcommand{\PO}{{\rm Prob}[\Omega]}
\newcommand{\CPOa}{\mathrm{Comp}\big({\rm Prob}[\Omega_{X}^{[0,\infty)}]\big)}
\newcommand{\CC}{\mathcal{C}}
\newcommand{\torN}{\mathbb{T}^N}
\newcommand{\8}{\infty}
\newcommand{\diver}{\mathrm{div}}
\newcommand{\pas}{$\mathcal{P}$-a.s.}
\newcommand{\N}{\mathbb{N}}
\newcommand{\ve}{\varepsilon}
\newcommand{\Ov}[1]{\overline{#1}}
\newcommand{\aleq}{\lesssim}
\newcommand{\vr}{\varrho}
\newcommand{\vu}{\vc{u}}
\newcommand{\vc}[1]{{\bf #1}}
\newcommand{\Div}{{\rm div}}
\newcommand{\Grad}{\nabla}
\newcommand{\dd}{{\rm d}}
\newcommand{\dW}{\,{\rm d} {W}}
\newcommand{\dx}{\,{\rm d} {x}}
\newcommand{\dt}{\,{\rm d} t }
\newcommand{\dtau}{\,{\rm d} \tau }
\newcommand{\dxt}{\dx  \dt}
\newcommand{\intO}[1]{\int_{\Omega} #1 \ \dx}
\newcommand{\intQ}[1]{\int_{\torN} #1 \ \dx}
\newcommand{\intTN}[1]{\int_{\torN} #1 \ \dx}
\newcommand{\D}{{\rm d}}
\newcommand{\ep}{\varepsilon}
\newcommand{\R}{\mathbb{R}}
\newcommand{\E}{\mathbb{E}}
\newcommand{\expe}[1]{ \mathbb{E} \left[ #1 \right] }
\newcommand{\OTX}{\Omega^{[T, \infty)}_X}
\newcommand{\ONTX}{\Omega^{[0,T]}_X}
\newcommand{\ONX}{\Omega^{[0, \infty)}_X}
\definecolor{Cgrey}{rgb}{0.85,0.85,0.85}
\definecolor{Cblue}{rgb}{0.50,0.85,0.85}
\definecolor{Cred}{rgb}{1,0,0}
\definecolor{fancy}{rgb}{0.10,0.85,0.10}
\newcommand\Cbox[2]{%
    \newbox\contentbox%
    \newbox\bkgdbox%
    \setbox\contentbox\hbox to \hsize{%
        \vtop{
            \kern\columnsep
            \hbox to \hsize{%
                \kern\columnsep%
                \advance\hsize by -2\columnsep%
                \setlength{\textwidth}{\hsize}%
                \vbox{
                    \parskip=\baselineskip
                    \parindent=0bp
                    #2
                }%
                \kern\columnsep%
            }%
            \kern\columnsep%
        }%
    }%
    \setbox\bkgdbox\vbox{
        \color{#1}
        \hrule width  \wd\contentbox %
               height \ht\contentbox %
               depth  \dp\contentbox
        \color{black}
    }%
    \wd\bkgdbox=0bp%
    \vbox{\hbox to \hsize{\box\bkgdbox\box\contentbox}}%
    \vskip\baselineskip%
}
\date{}
\begin{document}


\title{Markov selection for the stochastic compressible Navier--Stokes system}

\author{Dominic Breit}
\address[D. Breit]{Department of Mathematics, Heriot-Watt University, Riccarton Edinburgh EH14 4AS, UK}
\email{d.breit@hw.ac.uk}

\author{Eduard Feireisl}
\address[E.Feireisl]{Institute of Mathematics AS CR, \v{Z}itn\'a 25, 115 67 Praha 1, Czech Republic}
\email{feireisl@math.cas.cz}
\thanks{The research of E.F. leading to these results has received funding from
the Czech Sciences Foundation (GA\v CR), Grant Agreement
18--05974S. The Institute of Mathematics of the Academy of Sciences of
the Czech Republic is supported by RVO:67985840.}

\author{Martina Hofmanov\'a}
\address[M. Hofmanov\'a]{Fakult\"at f\"ur Mathematik, Universit\"at Bielefeld, D-33501 Bielefeld, Germany}
\email{hofmanova@math.uni-bielefeld.de}

\begin{abstract}
We analyze the Markov property of solutions to the compressible Navier--Stokes system perturbed by a general multiplicative stochastic forcing.
We show the existence of an almost sure Markov selection to the associated martingale problem. Our proof is based on the abstract framework introduced in [F. Flandoli, M. Romito: Markov selections for the 3D stochastic Navier--Stokes
equations. Probab. Theory Relat. Fields 140, 407--458. (2008)]. A major difficulty arises from the fact, different from the incompressible case, that the velocity field is not continuous in time. In addition, it cannot be recovered from the variables whose time evolution is described by the Navier--Stokes system, namely, the density and the momentum. We overcome this issue by introducing an auxiliary variable into the Markov selection procedure.
\end{abstract}

\subjclass[2010]{60H15, 60H30, 35Q30,
76M35, 76N10}
\keywords{Markov selection, compressible Navier--Stokes system, martingale solution, stochastic forcing}

\date{\today}

\maketitle



\section{Introduction}
In this paper we are concerned with the problem of Markov selection for the \emph{compressible Navier--Stokes system} driven by a stochastic forcing
\begin{equation} \label{E1}
\D \vr + \Div (\vr \vu) \dt = 0,
\end{equation}
\begin{equation} \label{E2}
\D (\vr \vu) + \Div (\vr \vu \otimes \vu) \dt + \Grad p(\vr) \dt = \Div \,\mathbb{S}(\Grad \vu) \dt +
 \mathbb{G} (\vr,\vr\vu)\dW,
\end{equation}
\begin{equation} \label{E3}
\mathbb{S}(\Grad \vu) = \mu \left( \Grad \vu + \Grad^t \vu - \frac{2}{N} \Div \vu \mathbb{I} \right)
+ \lambda \Div \vu \mathbb{I},\quad \mu > 0, \ \lambda \geq 0,
\end{equation}
supplemented with space-periodic boundary conditions, that is, the spatial variable $x$ belongs to the flat torus $
\torN = \left( [-1,1]|_{\{ -1, 1 \}} \right)^N$, $N=2,3.$
This system governs the time evolution of density $\vr$ and velocity $\vu$ of a compressible viscous fluid, $p(\vr)$ denotes the pressure and $\mu,\lambda$ are viscosity coefficients. The system is perturbed by a stochastic forcing driven by a cylindrical Wiener process $W$ and a possibly nonlinear dependence on the density $\vr$ and momentum $\vr\vu$, cf. Section \ref{ss:force} for details.
A significant progress has been made recently on the system \eqref{E1}--\eqref{E3} and we refer the reader to the monograph \cite{BFHbook} for a detailed exposition and further references. Here, we would only like to give a brief account of the current state of art, which has led us to writing the present article.

Many fundamental problems in modern continuum mechanics remain largely open and the situation is not different when it comes to the {\em compressible} Navier--Stokes system. In fact, in contrast to the {\em incompressible} counterpart the situation is even more challenging as uniqueness is unknown already in space dimension 2. The only available framework for global existence of \eqref{E1}--\eqref{E3} is the concept of the so-called dissipative martingale solutions established in \cite{BH,BFH2}. These solutions are weak in both PDE and probabilistic sense and in addition they satisfy a suitable version of energy inequality. This way they preserve an important part of information that would be otherwise lost within the construction of ordinary weak solutions. The energy inequality is the cornerstone for further applications and in particular it allows to prove weak--strong uniqueness, see \cite{BFH2}. In \cite{BFH18} it was shown that strong solutions exist locally in time. As these solutions possess sufficient space regularity, they are  unique and as a consequence they are also strong in the probabilistic sense.
Nevertheless, there is still a significant gap in the above theory, namely, the global existence of unique solutions is still missing. The situation is the same in the deterministic setting (see \cite{Li2,FNP}) and, as a matter of fact, also for  the {\em incompressible} Navier--Stokes system in space dimension 3.

An important feature of systems with uniqueness is their memoryless property called Markovianity: Letting the system run from time $0$ to time $s$ and then restarting and letting it run from time $s$ to time $t$ gives the same outcome as letting it run directly from time $0$ to time $t$. In other words, the knowledge of the whole past up to time $s$ provides no more useful information about the outcome at time $t$ than knowing the state of the system at time $s$ only. For systems where the uniqueness is unknown, a natural question is whether there exists a Markov selection. Roughly speaking, for every initial condition the system possesses possibly multiple solutions and each of them generates a probability measure on the space of trajectories, the associated law. Markov selection then chooses one law for every initial condition in such a way that the above explained ``gluing'' property holds. In this way, it is a step in the direction of uniqueness but it shall be stressed that uniqueness  still remains out of reach (see the discussion in \cite{FlaRom,StVa}).

It is worth noting that this approach can be applied also to the standard deterministic fluid model without explicit stochastic terms. The associated measures are then supported on the set 
of all global solutions emanating from given initial data and Markovianity may be interpreted in the same way as above.\\

Existence of a Markov selection for a class of stochastic differential equations has been given by Krylov \cite{Kr}. The crucial observation is that
that Markovianity can be deduced from disintegration property (stability with respect to building conditional expectations) and reconstruction property (stability with respect to ``gluing'' together) of a family of probability laws.
The method has been presented by Stroock--Varadhan \cite{StVa} and generalized to an infinite dimensional setting by Flandoli--Romito \cite{FlaRom} and further by Goldys--R\"ockner--Zhang \cite{GRZ}. Application to a surface growth model has been given by Bl\"omker--Flandoli--Romito \cite{BFR}. In particular, the work by Flandoli--Romito \cite{FlaRom} established the existence of a Markov selection for the 3D incompressible Navier--Stokes system under general additive noise perturbation. In addition, the strong Feller property was shown under stronger assumptions on the noise. Regularity with respect to  initial conditions was proved by Flandoli--Romito \cite{FR2}. Another approach towards existence of Markov solutions and ergodicity for the 3D incompressible Navier--Stokes system based on Galerkin approximations has beed presented by Da~Prato--Debussche \cite{DaPDe1} and Debussche--Odasso \cite{DeOd}.

\medskip

Our paper follows the approach of \cite{FlaRom} and we show the existence of a Markov selection for the system \eqref{E1}--\eqref{E3} (in fact, we have to use the generalization from \cite{GRZ} to Polish spaces due to the complicated structure of the compressible system). Even though the overall structure of the proof is rather similar, we have discovered several interesting challenges along the way. They originate in the significantly more involved structure of the {\em compressible} model \eqref{E1}--\eqref{E3} in comparison to the {\em incompressible} one considered in \cite{FlaRom}. The most striking point with various unpleasant consequences is that \eqref{E1}--\eqref{E3} is a mixed system  whose  solution consists of a couple of density and velocity $[\vr,\vu]$, but the time evolution is only described for density and momentum $[\vr,\vr\vu]$. Furthermore, since the so-called vacuum regions, where the density vanishes, cannot be excluded, it is impossible to gain any information on the time regularity of the velocity. As a consequence, it is only a class of equivalence in time and not a stochastic process in the classical sense.

Therefore, it seems that the natural variables for the desired Markov property is the couple of density and momentum. However, and again due to the presence of the vacuum states, the velocity {\em cannot} be recovered from these variables. In other words, the velocity is {\em not} a measurable function of $[\vr,\vr\vu]$. This fact has already been observed in the proof of existence in \cite{BH}, where the filtration associated to a martingale solution was generated by the density and the velocity. This is  sufficient to recover the momentum $\vr\vu$ as it is a measurable function of $\vr$ and $\vu$. Let us point out that if the equation was deterministic, that is the forcing was of the form $\vr \mathbf{f}\,\dd t$ for some deterministic function $\mathbf{f}$, then (at least under certain boundary conditions) the velocity is a measurable function of $[\vr,\vr\vu]$. In fact, all the terms on the left hand side of the momentum equation \eqref{E2} as well as the forcing can be written as functions of $[\vr,\vr\vu]$ and, as a consequence, also the dissipative term on the right hand side is a function of $[\vr,\vr\vu]$. Nevertheless, under the presence of the stochastic perturbation we can only deduce that the right-hand side of \eqref{E2}, i.e. the sum of the dissipative and the stochastic term, is a measurable function of $[\vr,\vr\vu]$. This is not enough in order to recover the structure of the stochastic integral.

In order to overcome this issue, we introduce an auxiliary variable $\bfU$ together with  an auxiliary equation
$$
\dd\bfU=\bfu \,\dd t,\qquad \bfU(0)=\bfU_{0},
$$
and we establish the existence of a Markov selection for the triple $[\vr,\vr\vu,\bfU]$. Note that this step indeed solves the problem discussed above: since the velocity $\vu$ belongs a.s. to $L^{2}_{\rm{loc}}(0,\infty;W^{1,2}(\mt))$, the new variable $\bfU$ is a continuous stochastic process with trajectories a.s. in $W^{1,2}_{\rm{loc}}(0,\infty;W^{1,2}(\mt))$. In addition, $\bfu$ is a measurable function of $\bfU$ and thus we recover all the quantities in our system from the knowledge of $[\vr,\vr\vu,\bfU]$. Under suitable boundary conditions  we may have alternatively included an auxiliary variable corresponding to the stochastic integral, which would also provide us with the missing piece of information.
Nevertheless, we shall mention that the initial condition $\bfU_{0}$ is rather superfluous. More precisely, for the Markov selection it is necessary to vary the initial condition for the whole triple $[\vr,\vr\vu,\bfU]$ and that is the reason why we included an arbitrary initial condition $\bfU_{0}$. However, for the recovery of $\bfu$, this is not needed and, on the other hand, $\bfU$ is not a function of $\vu$ due to the missing initial datum. Hence the mapping $\bfU\mapsto \vu$ is not injective.

We remark that as an alternative one may establish the existence of a Markov selection for the couple $[\vr,\bfU]$ which would in turn imply the same result for $[\vr,\vr\bfu,\bfU]$ since for a.e. time the momentum can be recovered from $[\vr,\bfU]$. However, this would require a modified definition of a solution to the martingale problem and the proofs would not simplify. Therefore we chose to work directly with the triple $[\vr,\vr\bfu,\bfU]$.

The paper is organized as follows.
In Section \ref{N} we collect some known concepts for probability measures on Polish spaces.
The bulk is the abstract Markov selection in Theorem \ref{thm:2.8}. It is a slight modification of the Markov selection for Polish spaces from \cite{GRZ}. Section
\ref{sec:NS} is concerned with martingale solutions to the compressible Navier--Stokes system \eqref{E1}--\eqref{E3}. We show the equivalence of the concept of dissipative martingale solutions (existence of which has been shown in \cite{BH} and \cite{BFH2}) and a solution to the associated martingale problem. The latter one is a probability law on the space of trajectories, cf. Definition \ref{D} for the precise formulation.
Our main result is contained in Section \ref{s:main}: In Theorem \ref{thm:main} we show the existence of a Markov selection for the system \eqref{E1}--\eqref{E3}.

\section{Probability framework}
\label{N}

Let $X$ be a topological space. The symbol $\mathfrak{B}(X)$ denotes the $\sigma$-algebra of Borel subsets of $X$.
If $\mathcal{U}$ is a Borel measure on $X$, we denote by $\Ov{\mathfrak{B}(X)}$ the $\sigma$-algebra of all Borel subsets of $X$ augmented by all zero measure sets.
The symbol ${\rm Prob}[X]$ denotes the set of all Borel probability measures on a topological space $X$.
In addition,
$
( [0,1], \Ov{\mathfrak{B}[0,1]}, \mathfrak{L} )
$
denotes the \emph{standard probability space}, where $\mathfrak{L}$ is the Lebesgue measure.

\subsection{Trajectory spaces}

Let $(X,d_X)$ be a Polish space. For $T>0$ we introduce the trajectory spaces
\[
\ONTX = C( [0, T]  ; X ),\qquad
\OTX = C_{{\rm loc}}( [T, \infty); X ),\qquad
\ONX = C_{{\rm loc}}( [0, \infty); X ),
\]
and denote $\mathfrak{B}_{T}=\mathfrak{B}(\Omega_{X}^{[0,T]})$.
Note that all the above trajectory spaces are Polish as long as $X$ is Polish.
For $\xi \in \OTX$ we define a time shift,
\[
\mathcal{S}_\tau : \ \OTX \to \Omega^{[T +\tau; \infty)}_X,\
\mathcal{S}_\tau [\xi]_t = \xi_{t - \tau}, \ t \geq T+\tau.
\]
Obviously, the mapping $\mathcal{S}_\tau$ is an isometry.
For a Borel measure $\mathcal V$ on $\OTX$, the time shift
$\mathcal{S}_{{-\tau}}$ is a Borel measure on the space $\Omega^{[T-\tau, \infty)}_X$ given by
\[
\mathcal{S}_{{-\tau}} [\mathcal V] (B) = \mathcal{V} (\mathcal{S}_\tau(B)),\
B \in \mathfrak{B} \big(\Omega^{[T-\tau, \infty)}_X \big).
\]

\subsection{Disintegration}\label{subsecT1}
A conditional probability corresponds to disintegration of a probability measure with respect to a $\sigma$-field.
We report the following result, cf. \cite[Theorem 1.1.6]{StVa}.

\begin{Theorem} \label{T1}

Let $X$ be a Polish space.
Let $\mathcal U \in {\rm Prob}[\ONX]$ and $T \geq 0$.
Then there exists a unique family of probability measures
\[
\mathcal U|^{\tilde \omega }_{\mathfrak{B}_T}  \in {\rm Prob}[\OTX] \
\mbox{for}\ \mathcal U\mbox{-a.a.} \ \tilde \omega
\]
such that the mapping
\[
\ONX \ni\tilde{\omega}  \mapsto \mathcal U|^{\tilde\omega}_{\mathfrak{B}_T} \in {\rm Prob}[\OTX]
\]
is $\mathcal U$-measurable and
the following properties hold:
\begin{enumerate}[(a)]
\item For $\omega \in \OTX$ we have $\mathcal U|^{\tilde\omega}_{\mathfrak{B}_T}$-a.s.
\[
\omega(T) = \tilde{\omega}(T);
\]
\item
For any Borel set ${A} \subset \ONTX$ and any Borel set ${B} \subset \OTX$,
\[
\mathcal U \left( \omega|_{[0,T]} \in {A}, \ \omega|_{[T, \infty)} \in {B} \right) =
\int_{\tilde{\omega} \in {A}} \mathcal U|^{\tilde\omega}_{\mathfrak{B}_T} ( B ) \ {\rm d} \mathcal U(\tilde \omega).
\]
\end{enumerate}
\end{Theorem}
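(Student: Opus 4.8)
\medskip\noindent
\textbf{Strategy of proof.} The plan is to realize the claimed family as a regular conditional probability of $\mathcal U$ and then push it forward onto the future. Introduce the past and future restriction maps
\[
r_T\colon \ONX\to\ONTX,\quad r_T[\omega]=\omega|_{[0,T]},\qquad \rho_T\colon \ONX\to\OTX,\quad \rho_T[\omega]=\omega|_{[T,\infty)},
\]
which are both continuous for the topology of locally uniform convergence, and set $\mathcal G_T:=r_T^{-1}(\mathfrak B_T)\subseteq\mathfrak B(\ONX)$ --- the sub-$\sigma$-algebra of events depending only on the trajectory on $[0,T]$. Since $X$ is Polish the trajectory space $\ONX$ is Polish, hence a standard Borel space, so the classical existence theorem for regular conditional probabilities (\cite[Theorem 1.1.6]{StVa}) yields a map $\tilde\omega\mapsto\mu_{\tilde\omega}\in{\rm Prob}[\ONX]$, which is $\mathcal G_T$-measurable up to $\mathcal U$-null sets and satisfies
\[
\mathcal U(C\cap G)=\int_G\mu_{\tilde\omega}(C)\,\dd\mathcal U(\tilde\omega)\qquad\text{for all }C\in\mathfrak B(\ONX),\ G\in\mathcal G_T.
\]

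Next I would use that $\mathfrak B_T$, and hence $\mathcal G_T$, is countably generated --- the Polish space $\ONTX$ being second countable --- to pass to a \emph{proper} version of the conditional probability: for $\mathcal U$-a.e. $\tilde\omega$ one has $\mu_{\tilde\omega}(G)=\ind_G(\tilde\omega)$ simultaneously for all $G\in\mathcal G_T$, so that $\mu_{\tilde\omega}$ is concentrated on the fibre $\{\omega\in\ONX:\ \omega|_{[0,T]}=\tilde\omega|_{[0,T]}\}$. I then define
\[
\mathcal U|^{\tilde\omega}_{\mathfrak B_T}:=(\rho_T)_\ast\mu_{\tilde\omega}\in{\rm Prob}[\OTX].
\]
Because push-forward along the continuous map $\rho_T$ is continuous on spaces of probability measures, $\tilde\omega\mapsto\mathcal U|^{\tilde\omega}_{\mathfrak B_T}$ inherits $\mathcal U$-measurability, and since $\mu_{\tilde\omega}$ is $\mathcal G_T$-measurable it depends on $\tilde\omega$ only through $\tilde\omega|_{[0,T]}$, consistent with the notation $\mathcal U|^{\tilde\omega}_{\mathfrak B_T}$.

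The remaining steps are routine verifications. For (a) I would note that, by properness, $\mu_{\tilde\omega}$-a.s. $\omega(T)=\tilde\omega(T)$; since $\omega(T)$ depends on $\omega$ only through $\rho_T[\omega]$, the identity $\omega(T)=\tilde\omega(T)$ holds $(\rho_T)_\ast\mu_{\tilde\omega}$-a.s. For (b) I would observe $\{\omega|_{[0,T]}\in A\}=r_T^{-1}(A)\in\mathcal G_T$ and $\{\omega|_{[T,\infty)}\in B\}=\rho_T^{-1}(B)$, so applying the defining identity of $\mu_{\tilde\omega}$ with $C=\rho_T^{-1}(B)$ and $G=r_T^{-1}(A)$ gives
\[
\mathcal U\big(\omega|_{[0,T]}\in A,\ \omega|_{[T,\infty)}\in B\big)=\int_{r_T^{-1}(A)}\mu_{\tilde\omega}\big(\rho_T^{-1}(B)\big)\,\dd\mathcal U(\tilde\omega)=\int_{\tilde\omega\in A}\mathcal U|^{\tilde\omega}_{\mathfrak B_T}(B)\,\dd\mathcal U(\tilde\omega),
\]
using $\mu_{\tilde\omega}(\rho_T^{-1}(B))=(\rho_T)_\ast\mu_{\tilde\omega}(B)=\mathcal U|^{\tilde\omega}_{\mathfrak B_T}(B)$ and reading $\int_{\tilde\omega\in A}$ as $\int_{r_T^{-1}(A)}$. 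Uniqueness up to $\mathcal U$-null sets then follows by the standard argument: fixing a countable algebra that generates $\mathfrak B(\OTX)$, property (b) characterises $\tilde\omega\mapsto\mathcal U|^{\tilde\omega}_{\mathfrak B_T}(B)$ as a version of $\mathbb E^{\mathcal U}[\ind_{\rho_T^{-1}(B)}\,|\,\mathcal G_T]$ for each such $B$, which is $\mathcal U$-a.s. unique; intersecting these countably many null sets and invoking that two probability measures coinciding on a generating algebra are equal finishes the argument. The one genuinely non-formal ingredient --- and hence the main obstacle in a self-contained treatment --- is the properness (fibre-concentration) of the conditional probability, which is exactly where the Polish, i.e. second-countable, structure of the trajectory spaces is used; everything else is bookkeeping with the restriction maps.
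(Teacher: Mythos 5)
The paper gives no argument for Theorem \ref{T1} beyond the citation of \cite[Theorem 1.1.6]{StVa}, and your proposal is exactly the standard derivation of the stated form from that result: existence of a regular conditional probability on the Polish space $\ONX$ given $\mathcal G_T=r_T^{-1}(\mathfrak B_T)$, passage to a proper version concentrated on the fibres of $r_T$ (legitimate because $\mathfrak B_T$ is countably generated and these fibres are precisely the atoms of $\mathcal G_T$), and pushforward under the continuous restriction $\rho_T$; the verifications of (a) and (b) are correct, so the existence part is sound and follows the same route the paper intends.

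One caveat concerns uniqueness. You identify $\tilde\omega\mapsto \mathcal U|^{\tilde\omega}_{\mathfrak B_T}(B)$ as a version of $\mathbb E^{\mathcal U}\big[\mathbf 1_{\rho_T^{-1}(B)}\mid\mathcal G_T\big]$, but the identity in (b) only forces this once the function is known to be $\mathcal G_T$-measurable (mod $\mathcal U$-null sets): two merely $\mathcal U$-measurable functions with equal integrals over every set of a sub-$\sigma$-algebra need not agree a.e. In fact, with the hypotheses read literally (``$\mathcal U$-measurable'' plus (a)--(b)) uniqueness fails: the family $\tilde\omega\mapsto\delta_{\tilde\omega|_{[T,\infty)}}$ is Borel, satisfies (a) and (b), and differs a.s. from the conditional law when, say, $\mathcal U$ is Wiener measure. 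The statement, as in \cite{StVa}, must be understood with the kernel required to be $\mathfrak B_T$-measurable, i.e. a function of $\tilde\omega|_{[0,T]}$ only --- a property your construction has, as you observe --- and within that class your conditional-expectation argument does close the uniqueness proof. So the proposal is essentially correct and matches the paper's (cited) approach, but the uniqueness step should be stated explicitly within the $\mathfrak B_T$-measurable class.
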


\subsection{Reconstruction}\label{subsecT2}
Reconstruction can be understood as the inverse procedure to disintegration, some sort of ``gluing together'' procedure.
We report the following result, see Lemma
6.1.1 and Theorem 6.1.2 in \cite{StVa}.

\begin{Theorem} \label{T2}

Let $X$ be a Polish space.
Let $\mathcal U \in {\rm Prob}[\ONX]$.
Suppose that $Q_\omega$ is a family of probability measures, such that
\[
\ONX\ni\omega  \mapsto Q_{\omega} \in {\rm Prob}[\OTX],
\]
is $\mathcal U$-measurable.
Then there exists a unique probability measure $\mathcal U \otimes_T Q$ such that
\begin{enumerate}[(a)]
\item For any Borel set ${A} \subset \ONTX$ we have
\[
(\mathcal U \otimes_T Q) (A ) = \mathcal U (A ) ;
\]
\item For $\tilde{\omega} \in \OTN$ we have $\mathcal U$-a.s.
\[
(\mathcal U \otimes_T Q)|^{\tilde \omega}_{\mathcal{B}_T} = Q_{\tilde \omega}.
\]

\end{enumerate}

\end{Theorem}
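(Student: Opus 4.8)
The plan is to construct $\mathcal U \otimes_T Q$ as the ``Markov composition'' of $\mathcal U$ with the kernel $Q$, and then to identify its disintegration by invoking the uniqueness statement in Theorem~\ref{T1}. First I identify $\ONX$ with the closed (hence Polish) subset of the product $\ONTX \times \OTX$ consisting of the compatible pairs $(\eta,\zeta)$ with $\eta(T)=\zeta(T)$, via $\omega \leftrightarrow (\omega|_{[0,T]}, \omega|_{[T,\infty)})$; under this identification the sets $\{\omega : \omega|_{[0,T]} \in A,\ \omega|_{[T,\infty)} \in B\}$ with $A \in \mathfrak{B}_T$ and $B \in \mathfrak{B}(\OTX)$ form a $\pi$-system generating $\mathfrak{B}(\ONX)$. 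Reading the measurability hypothesis on $\omega \mapsto Q_\omega$ as $\mathfrak{B}_T$-measurability up to $\mathcal U$-null sets, so that $Q_\omega$ depends $\mathcal U$-a.s.\ only on $\omega|_{[0,T]}$ and is $\mathcal U$-a.s.\ concentrated on the continuations $\{\zeta : \zeta(T)=\omega(T)\}$, the natural candidate is the measure prescribed on these generating sets by
\[
(\mathcal U \otimes_T Q)\big(\omega|_{[0,T]} \in A,\ \omega|_{[T,\infty)} \in B\big) = \int_{\{\omega|_{[0,T]} \in A\}} Q_\omega(B)\, \dd\mathcal U(\omega).
\]

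The first task is to verify that this prescription defines a Borel probability measure on $\ONX$. Realised on the product $\ONTX \times \OTX$, this is the standard fact that a probability measure composed with a measurable transition kernel yields a well-defined joint law; the two points requiring attention are the Borel measurability of $\omega \mapsto Q_\omega(B)$ for every Borel $B$ (obtained by a monotone-class argument from a generating algebra on which measurability is immediate) and the $\sigma$-additivity of the rectangle set function (a dominated-convergence argument, using that each $Q_\omega$ is countably additive). Taking $A = \ONTX$, $B = \OTX$ gives total mass $\int 1\,\dd\mathcal U = 1$; since $Q_\omega$ is $\mathcal U$-a.s.\ concentrated on the continuations of $\omega|_{[0,T]}$, the compatible pairs carry full mass and $\mathcal U \otimes_T Q$ indeed lives on $\ONX$. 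Property~(a) is then immediate: for $A \in \mathfrak{B}_T$, taking $B = \OTX$ above yields $(\mathcal U \otimes_T Q)(A) = \int_{\{\omega|_{[0,T]} \in A\}} Q_\omega(\OTX)\,\dd\mathcal U(\omega) = \mathcal U(A)$.

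For property~(b) I would avoid computing the conditional measure directly and instead appeal to the uniqueness clause of Theorem~\ref{T1}, applied with $\mathcal U \otimes_T Q$ in place of $\mathcal U$: it is enough to check that the family $\tilde\omega \mapsto Q_{\tilde\omega}$ satisfies properties (a) and (b) of that theorem relative to $\mathcal U \otimes_T Q$. Property (a) there is the compatibility $\omega(T) = \tilde\omega(T)$ holding $Q_{\tilde\omega}$-a.s., which is precisely the concentration hypothesis on $Q$; property (b) there asks, for Borel $A \subset \ONTX$ and $B \subset \OTX$,
\[
(\mathcal U \otimes_T Q)\big(\omega|_{[0,T]} \in A,\ \omega|_{[T,\infty)} \in B\big) = \int_{\{\tilde\omega|_{[0,T]} \in A\}} Q_{\tilde\omega}(B)\, \dd(\mathcal U \otimes_T Q)(\tilde\omega),
\]
and since the integrand is $\mathfrak{B}_T$-measurable while, by property~(a), $\mathcal U \otimes_T Q$ agrees with $\mathcal U$ on $\mathfrak{B}_T$, the right-hand side equals $\int_{\{\tilde\omega|_{[0,T]} \in A\}} Q_{\tilde\omega}(B)\,\dd\mathcal U(\tilde\omega)$, which is exactly the defining formula for the left-hand side; hence $(\mathcal U \otimes_T Q)|^{\tilde\omega}_{\mathfrak{B}_T} = Q_{\tilde\omega}$ for $\mathcal U$-a.a.\ $\tilde\omega$. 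Uniqueness of $\mathcal U \otimes_T Q$ follows from the same accounting: any probability measure satisfying (a) and (b) must, by Theorem~\ref{T1}(b), assign each generating rectangle the value $\int_{\{\omega|_{[0,T]} \in A\}} Q_\omega(B)\,\dd\mathcal U(\omega)$, and two Borel probability measures agreeing on a generating $\pi$-system coincide. The one genuinely technical step, and where I expect to spend effort, is the measure-theoretic verification mentioned above (Borel measurability of $\omega \mapsto Q_\omega(B)$ and $\sigma$-additivity of the rectangle set function); the remainder is formal manipulation of conditional measures, and the statement is, in the end, a mild repackaging of Lemma 6.1.1 and Theorem 6.1.2 of \cite{StVa}.
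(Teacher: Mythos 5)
Your proof is correct and takes essentially the route of the source the paper cites for this statement (the paper gives no proof of Theorem \ref{T2}, merely reporting Lemma 6.1.1 and Theorem 6.1.2 of \cite{StVa}): compose $\mathcal U$ with the kernel $Q$ on the generating rectangles, check (a) directly, and deduce (b) and uniqueness from the uniqueness of the disintegration in Theorem \ref{T1} plus a $\pi$-system argument. Your explicit reading of the hypotheses --- $\mathfrak{B}_T$-measurability of $\omega\mapsto Q_\omega$ and concentration of $Q_\omega$ on $\{\zeta:\,\zeta(T)=\omega(T)\}$ --- supplies exactly what the statement tacitly assumes (and what holds in its application in Definition \ref{def:2.5}), so making it explicit is appropriate rather than a gap.
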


\subsection{Markov processes}
\label{subsec:markov}

In this subsection we present the abstract framework of almost sure Markov processes as well as the Markov selection theorem.
We  follow the framework of \cite{GRZ} which generalizes the theory from \cite{FlaRom} to Polish spaces. Let $(X,d_X)$ and $(H,d_H)$ be two Polish space,
where the embedding $H\hookrightarrow X$ is continuous and dense. Furthermore, let $Y$ be a Borel subset of $H$. As $(Y,d_H)$ is not necessarily complete and the embedding $Y\hookrightarrow X$ is not assumed to be dense the situation sightly differs form \cite{GRZ}.
A family of probability measures $\{\mathcal U_{y}\}_{y\in Y}$ on $\Omega_{X}^{[0,\infty)}$ is called Markovian if we have for any $y\in Y$ that
\begin{align*}
\mathcal U_{\omega(\tau)}=\mathcal S_{-\tau}  \mathcal U_y|_{\mathfrak B_\tau}^\omega\quad\text{for $\mathcal U_y$-a.a. }\omega\in\Omega_{X}^{[0,\infty)}.
\end{align*}
The following definition is inspired by \cite[Def. 2.3]{GRZ}. It is concerned with probability measures which are supported only on a certain subset of a Polish space.
\begin{Definition}
Let $Y$ be a Borel subset of $H$ and let $\mathcal U\in{\rm Prob}[\Omega_{X}^{[0,\infty)}]$. We say that $\mathcal U$ is concentrated on the paths with values in $Y$ if there is
some $A\in \mathfrak B(\Omega_{X}^{[0,\infty)})$ such that $\mathcal U(A)=1$ and $A\subset \{\omega\in\Omega_{X}^{[0,\infty)}:\,\omega(\tau)\in Y\,\forall \tau\geq0\}$. We write $\mathcal U\in\mathrm{Prob}_Y[\Omega_{X}^{[0,\infty)}]$.
\end{Definition}

The following definition is inspired by \cite[Def. 2.4]{FlaRom} (see also \cite{GRZ} for a version on Polish spaces). It generalizes the classical Markov process
to the situation, where the Markov property only holds for a.e. time-point. It has been introduced for the Navier--Stokes system, where the energy inequality does not hold for all times.
\begin{Definition}[Almost sure Markov property]\label{def:2.4}
Let $y\mapsto \mathcal U_y$ be a measurable map defined on a measurable subset $Y\subset H$ with values in ${\rm Prob}_Y[\Omega_{X}^{[0,\infty)}]$. The family $\{\mathcal U_y\}_{y\in Y}$ has the almost sure Markov property if for each $y\in Y$ there is a set $\mathfrak T\subset(0,\infty)$ with zero Lebesgue measure such that
\begin{align*}
\mathcal U_{\omega(\tau)}=\mathcal S_{-\tau}  \mathcal U_y|_{\mathfrak B_\tau}^\omega\quad\text{for $\mathcal U_y$-a.a. }\omega\in\Omega_{X}^{[0,\infty)}
\end{align*}
for all $\tau\notin \mathfrak T$.
\end{Definition}

The following definition is inspired by \cite[Definition 2.5]{FlaRom} (see also \cite{GRZ} for a version on Polish spaces). It is motivated by the crucial observation by Krylov \cite{Kr}
that Markovianity can be deduced from disintegration and reconstruction of a family of probability laws.
\begin{Definition}[Almost sure pre-Markov family]\label{def:2.5} Let $Y$ be a Borel subset of $H$.
Let $\mathcal C:Y\rightarrow \CPOa\cap \mathrm{Prob}_Y[\Omega_{X}^{[0,\infty)}]$ be a measurable map. The family
$\{\mathcal C(y)\}_{y\in Y}$ is almost surely pre-Markov if for each
$y\in Y$ and $\mathcal U \in \CC(y)$ there is a set $\mathfrak T\subset(0,\infty)$ with zero Lebesgue measure such that the following holds for all $\tau\notin\mathfrak T$
\begin{enumerate}
\item The disintegration property holds, i.e. we have
\begin{align*}
\mathcal S_{-\tau}  \mathcal U|_{\mathfrak B_\tau}^\omega\in \mathcal C(\omega(\tau))\quad\text{for $\mathcal U$-a.a. }\omega\in\Omega_{X}^{[0,\infty)};
\end{align*}
\item The reconstruction property holds, i.e. for each $\mathfrak B_\tau$-measurable map $\omega\mapsto Q_\omega:\Omega_{X}^{[0,\infty)}\rightarrow \mathrm{Prob}(\Omega_{X}^{[\tau,\infty)})$ with
\begin{align*}
\mathcal S_{-\tau} Q_\omega\in \mathcal C(\omega(\tau))\quad\text{for $\mathcal U$-a.a. }\omega\in\Omega_{X}^{[0,\infty)};
\end{align*}
we have $P\otimes_\tau Q\in\mathcal C(y)$.
\end{enumerate}
\end{Definition}

The following theorem states the existence of a Markov selection. It is a slight modification of \cite[Theorem 2.7]{GRZ} which in turn originates from \cite[Theorem 2.8]{FlaRom}.

\begin{Theorem}\label{thm:2.8}
Let $Y$ be a Borel subset of $H$.
Let $\{\mathcal C(y)\}_{y\in Y}$ be an almost sure pre-Markov family (as defined in Definition \ref{def:2.5}) with non-empty convex
values. Then there is a measurable map $y\mapsto \mathcal U_y$ defined on $Y$
with values in ${\rm Prob}_{Y}[\Omega_{X}^{[0,\infty)}]$ such that $\mathcal U_y\in\mathcal C(y)$ for all $y\in Y$ and $\{\mathcal U_y\}_{y\in Y}$ has the almost sure Markov property (as defined in Definition~\ref{def:2.5}).
\end{Theorem}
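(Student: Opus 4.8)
The plan is to follow the iterative selection scheme of Flandoli--Romito \cite{FlaRom} in the Polish--space formulation of \cite{GRZ}, the only new point being that the set $Y\subset H$ is not assumed dense or complete, which does not affect the argument since everything takes place on $\ONX$ among measures in $\mathrm{Prob}_Y[\ONX]$. For $f\in C_b(X)$ and $\lambda>0$ put
\[
\Phi_{f,\lambda}(\omega)=\int_0^\infty e^{-\lambda t}f(\omega(t))\,\dd t,\qquad \Lambda_{f,\lambda}(\mathcal U)=\int_{\ONX}\Phi_{f,\lambda}\,\dd\mathcal U .
\]
Since trajectories are continuous and $f$ is bounded, $\Phi_{f,\lambda}\in C_b(\ONX)$, so $\Lambda_{f,\lambda}$ is an affine, weakly continuous functional on $\mathrm{Prob}[\ONX]$. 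Given an almost sure pre-Markov family $\{\mathcal C(y)\}$ with non-empty, convex and compact values, define its \emph{refinement}
\[
\mathcal C_{f,\lambda}(y)=\Big\{\,\mathcal U\in\mathcal C(y)\ :\ \Lambda_{f,\lambda}(\mathcal U)=M_{f,\lambda}(y)\,\Big\},\qquad M_{f,\lambda}(y):=\sup_{\mathcal V\in\mathcal C(y)}\Lambda_{f,\lambda}(\mathcal V).
\]
As $\mathcal C(y)$ is compact the supremum is attained, so $\mathcal C_{f,\lambda}(y)\neq\emptyset$; it is convex because $\Lambda_{f,\lambda}$ is affine, and compact as a closed subset of $\mathcal C(y)$. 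Measurability of $y\mapsto M_{f,\lambda}(y)$ and of $y\mapsto\mathcal C_{f,\lambda}(y)$ follows from the measurability of $y\mapsto\mathcal C(y)$ by the measurable maximum theorem, and $y\mapsto\mathcal C_{f,\lambda}(y)$ then admits a measurable selection by Kuratowski--Ryll-Nardzewski.

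The heart of the matter is that $\{\mathcal C_{f,\lambda}(y)\}$ is again almost surely pre-Markov. Fix $y$ and $\mathcal U\in\mathcal C_{f,\lambda}(y)$, let $\mathfrak T$ be the null set coming from the pre-Markov property of $\mathcal C$ at $(y,\mathcal U)$, and fix $\tau\notin\mathfrak T$. Splitting the time integral at $\tau$ and disintegrating via Theorem \ref{T1} gives
\[
\Lambda_{f,\lambda}(\mathcal U)=\mathbb{E}^{\mathcal U}\Big[\int_0^\tau e^{-\lambda t}f(\omega(t))\,\dd t\Big]+\mathbb{E}^{\mathcal U}\Big[e^{-\lambda\tau}\,\Lambda_{f,\lambda}\big(\mathcal S_{-\tau}\mathcal U|_{\mathfrak B_\tau}^\omega\big)\Big],
\]
and by Theorem \ref{T2} the same decomposition holds for $\mathcal U\otimes_\tau Q$, with the first term unchanged (the two measures agree on $[0,\tau]$) and $\mathcal S_{-\tau}\mathcal U|_{\mathfrak B_\tau}^\omega$ replaced by $\mathcal S_{-\tau}Q_\omega$ in the second. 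From $\mathcal C$ pre-Markov, $\mathcal S_{-\tau}\mathcal U|_{\mathfrak B_\tau}^\omega\in\mathcal C(\omega(\tau))$, so $\Lambda_{f,\lambda}(\mathcal S_{-\tau}\mathcal U|_{\mathfrak B_\tau}^\omega)\le M_{f,\lambda}(\omega(\tau))$ $\mathcal U$-a.s.; if this were strict on a set of positive $\mathcal U$-measure, then choosing $Q_\omega$ with $\mathcal S_{-\tau}Q_\omega=R_{\omega(\tau)}$, where $z\mapsto R_z$ is a measurable selection of $\mathcal C_{f,\lambda}(z)$, the reconstruction property of $\mathcal C$ would produce $\mathcal U\otimes_\tau Q\in\mathcal C(y)$ with $\Lambda_{f,\lambda}(\mathcal U\otimes_\tau Q)>\Lambda_{f,\lambda}(\mathcal U)$, contradicting $\mathcal U\in\mathcal C_{f,\lambda}(y)$; this proves the disintegration property for $\mathcal C_{f,\lambda}$. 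For the reconstruction property, if $\mathcal S_{-\tau}Q_\omega\in\mathcal C_{f,\lambda}(\omega(\tau))$, the second term of the decomposition for $\mathcal U\otimes_\tau Q$ equals $\mathbb{E}^{\mathcal U}[e^{-\lambda\tau}M_{f,\lambda}(\omega(\tau))]$, which by the disintegration property just shown is exactly the second term for $\mathcal U$; hence $\Lambda_{f,\lambda}(\mathcal U\otimes_\tau Q)=\Lambda_{f,\lambda}(\mathcal U)=M_{f,\lambda}(y)$, i.e. $\mathcal U\otimes_\tau Q\in\mathcal C_{f,\lambda}(y)$.

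Now fix a countable family $\{f_i\}\subset C_b(X)$ separating Borel probability measures on $X$, a countable dense set $\{\lambda_j\}\subset(0,\infty)$, enumerate the pairs $(f_i,\lambda_j)$ as $(g_n,\mu_n)_{n\in\N}$, and iterate: $\mathcal C_0=\mathcal C$, $\mathcal C_n=(\mathcal C_{n-1})_{g_n,\mu_n}$. By the previous step each $\{\mathcal C_n(y)\}$ is almost surely pre-Markov with non-empty convex compact values, and $\mathcal C_\infty(y):=\bigcap_{n}\mathcal C_n(y)$ is non-empty (a nested intersection of compact sets), convex, and again almost surely pre-Markov: for each $(y,\mathcal U)$ the exceptional time set is the countable union of those of the $\mathcal C_n$, hence Lebesgue null, and both disintegration and reconstruction pass to the intersection. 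Moreover $\mathcal C_\infty(y)$ is a singleton: any two $\mathcal U_1,\mathcal U_2\in\mathcal C_\infty(y)$ satisfy $\Lambda_{g_n,\mu_n}(\mathcal U_1)=\Lambda_{g_n,\mu_n}(\mathcal U_2)$ for all $n$, so for each $i$ the bounded continuous functions $t\mapsto\mathbb{E}^{\mathcal U_k}[f_i(\omega(t))]$ have equal Laplace transforms on the dense set $\{\lambda_j\}$, hence coincide for every $t$; since the $f_i$ separate measures, $\mathcal U_1$ and $\mathcal U_2$ have identical one-point time marginals --- and this holds with $y$ replaced by any $z\in Y$. Feeding this back into the disintegration identity of Theorem \ref{T1} and inducting on the number of time points (using that all members of $\mathcal C_\infty(z)$ share their lower-order marginals), one obtains equality of all finite-dimensional marginals, whence $\mathcal U_1=\mathcal U_2$ as measures on $\ONX$.

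Writing $\mathcal C_\infty(y)=\{\mathcal U_y\}$, the map $y\mapsto\mathcal U_y$ is single-valued with measurable graph (the intersection of the measurable graphs of the $\mathcal C_n$), hence measurable into $\mathrm{Prob}[\ONX]$, with values in $\mathrm{Prob}_Y[\ONX]$ and $\mathcal U_y\in\mathcal C(y)$ for every $y$. Finally, since $\{\mathcal C_\infty(y)\}$ is almost surely pre-Markov with singleton values, for each $y$ the disintegration property gives, for $\tau$ outside a Lebesgue null set, $\mathcal S_{-\tau}\mathcal U_y|_{\mathfrak B_\tau}^\omega\in\mathcal C_\infty(\omega(\tau))=\{\mathcal U_{\omega(\tau)}\}$ for $\mathcal U_y$-a.a.\ $\omega$, i.e. $\mathcal S_{-\tau}\mathcal U_y|_{\mathfrak B_\tau}^\omega=\mathcal U_{\omega(\tau)}$, which is precisely the almost sure Markov property of Definition \ref{def:2.4}. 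I expect the main obstacle to be the core step of the second paragraph --- showing that the maximizing refinement preserves the pre-Markov structure --- together with the attendant measurability of $M_{f,\lambda}$ and the measurable selection of maximizers used in the contradiction argument.
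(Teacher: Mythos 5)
Your proposal is correct in substance but takes a genuinely different route from the paper. The paper's own proof is a two-line reduction: it extends the family to all of $H$ by setting $\tilde{\mathcal{C}}(h)=\{\delta_h\}$ (Dirac mass on the constant trajectory) for $h\notin Y$, observes that disintegration and reconstruction are trivial at such points, invokes the Polish-space selection theorem \cite[Thm.~2.7]{GRZ} as a black box, and restricts the resulting selection to $Y$. You instead reprove that underlying theorem: Krylov's maximization scheme with the functionals $\Lambda_{f,\lambda}$, the key lemma that the argmax refinement of an almost sure pre-Markov family is again almost surely pre-Markov, iteration over a countable separating family, and identification of the intersection as a singleton. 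This works directly on $Y$ precisely because, as you note, neither completeness of $(Y,d_H)$ nor density of $Y\hookrightarrow X$ enters the scheme; what the paper's route buys is brevity and a clean delegation of the long abstract argument to the literature, while yours buys a self-contained proof. Two spots in your sketch need more care if it is to stand on its own: (i) in the singleton step, the inclusion $\mathcal{S}_{-\tau}\,\mathcal{U}_k|^{\omega}_{\mathfrak{B}_\tau}\in\mathcal{C}_\infty(\omega(\tau))$ is the almost sure disintegration property, available only for $\tau$ outside a Lebesgue-null set depending on $(y,\mathcal{U}_k)$, so the induction yields equality of finite-dimensional marginals only when the conditioning times avoid these null sets; one then concludes $\mathcal{U}_1=\mathcal{U}_2$ because evaluations at a dense set of times generate the Borel $\sigma$-field of $\ONX$ (continuity of trajectories), a step you should make explicit; (ii) the ``common lower-order marginal'' functions fed back into the induction must be given measurable versions, e.g.\ via a measurable selection of $\mathcal{C}_\infty$. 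Both are standard repairs and do not affect the validity of your approach.
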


\begin{proof}
If $Y=H$ the statement is exactly \cite[Thm. 2.7]{GRZ}. We aim to reduce the general situation to this case. Define the map $\tilde{\mathcal{C}}:H\rightarrow\CPOa$ by
\begin{align*}
\tilde{\mathcal{C}}(h):=\begin{cases} \mathcal C(h),\quad h\in Y\\
\{\delta_h\},\quad h\notin Y
\end{cases}.
\end{align*}
Obviously, the map $\tilde{\mathcal{C}}$ has the disintegration and reconstruction property (it is assumed if $y\in Y$ and trivial otherwise). So, we can apply
\cite[Thm. 2.7]{GRZ} to get an almost sure Markov selection $\{\mathcal U_h\}_{h\in H}$ which yields an almost sure Markov selection $\{\mathcal U_y\}_{y\in Y}$ simply by restricting to $Y$.
\end{proof}

\subsection{Almost sure supermartingales}
In this subsection we collect some results on almost sure supermartingales (the supermartingale property only holds for a.a. time-point, see Definition \ref{def:3.2} below) from
\cite{FlaRom}, where $(\Omega,\mathfrak{B},(\mathfrak{B}_{t})_{t\geq0},\mathcal{U})$ denotes a stochastic basis. Almost sure supermartingales  have been invented in \cite{FlaRom} in order to deal with the energy balance of the Navier--Stokes system (which is only known to hold for a.a. time-point).
The following statements are generalizations of well-known statements for supermartingales (see, e.g, \cite{StVa}).

\begin{Definition}[\cite{FlaRom}, Def. 3.2]\label{def:3.2}
Let $\theta$ be an $(\mB_t)$-adapted real-valued stochastic process on $\Omega$.
We call $\theta$ an almost sure $((\mathfrak B_t)_{t\geq 0},\mathcal U)$-supermartingale if we have
\begin{align}\label{eq:def3.2}
\E^{\mathcal U}[\theta_t\mathbf 1_A]\leq \E^\mathcal U[\theta_s\mathbf 1_A]
\end{align}
for a.a. $s\geq0$, all $t\geq s$ and all $A\in\mB_s$. The time-points $s$ for which \eqref{eq:def3.2} holds are called regular times of $\theta$. The time-points $s$ for which \eqref{eq:def3.2} does not hold are called exceptional times of $\theta$.
\end{Definition}

The following two propositions are crucial for the behaviour of almost sure supermartingales when it comes to disintegration and reconstruction of the underlying probability measure.

\begin{Proposition}[\cite{FlaRom}, Prop. B.1]\label{prop:B1}
Let $\theta$ and $\zeta$ be two real-valued continuous and $(\mB_{t})$-adapted stochastic processes on $\Omega$ and let $t_0\geq0$. The following conditions are equivalent.
\begin{enumerate}
\item[(i)] $(\theta_{t})_{t\geq0}$ is a $((\mB_{t})_{t\geq0},\mathcal U)$-square
integrable martingale with quadratic variation $(\zeta_{t})_{t\geq 0}$;
\item[(ii)] For $\mathcal U$-a.a. $\omega\in\Omega$ the stochastic process $(\theta_{t})_{t\geq t_{0}}$ is a $((\mB_{t})_{t\geq t_{0}},\mathcal U|_{\mathfrak B_{t_{0}}}^\omega)$-square integrable martingale wit quadratic variation $(\xi_t)_{t\geq t_0}$ and we have $\E^{\mathcal U}\Big[\E^{\mathcal U|_{\mB_{t_{0}}}^{\cdot}}[\xi_t]\Big]<\infty$ for all $t\geq t_0$.
\end{enumerate}
\end{Proposition}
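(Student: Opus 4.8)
The plan is to reduce both implications to the test-set characterisation of martingales together with the defining property of the regular conditional probabilities $\mathcal U|_{\mathfrak B_{t_0}}^\omega$ furnished by Theorem~\ref{T1}. Recall that a continuous $(\mathfrak B_t)$-adapted process $M$ is a square-integrable $((\mathfrak B_t)_{t\ge t_0},\mathcal Q)$-martingale with quadratic variation $\langle M\rangle$ if and only if $\E^{\mathcal Q}[M_t^2+|\langle M\rangle_t|]<\infty$ for all $t\ge t_0$ and, for each of the two processes $P\in\{M,\,M^2-\langle M\rangle\}$, one has $\E^{\mathcal Q}[(P_t-P_s)\mathbf 1_A]=0$ for all $t_0\le s\le t$ and all $A\in\mathfrak B_s$. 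Since $(\mathfrak B_t)$ is countably generated — as for the canonical filtration on a trajectory space over a Polish space, which is the situation of interest — we may fix for each rational $s$ a countable $\pi$-system $\mathcal A_s$ generating $\mathfrak B_s$; by path-continuity of $\theta$ and $\zeta$, by the uniform integrability on compact time intervals of $L^1$-bounded martingales and $L^2$-bounded submartingales, and by a Dynkin argument, it suffices to verify the test-set identities only for rational $t_0\le s\le t$ and $A\in\mathcal A_s$ — countably many data, whose associated $\mathcal U$-exceptional sets can be gathered into a single $\mathcal U$-null set. One bookkeeping point: under disintegration at $t_0$ the natural quadratic variation of $(\theta_t)_{t\ge t_0}$ is the restarted process $\xi_t:=\zeta_t-\zeta_{t_0}$, and since $\zeta_{t_0}\in\mathfrak B_{t_0}$ it is $\mathcal U|_{\mathfrak B_{t_0}}^\omega$-a.s. equal to the constant $\zeta_{t_0}(\omega)$ (property~(a) of Theorem~\ref{T1}), so $\theta^2-\xi$ is a $\mathcal U|_{\mathfrak B_{t_0}}^\omega$-martingale on $[t_0,\infty)$ precisely when $\theta^2-\zeta$ is.

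For (i)$\Rightarrow$(ii), fix rational $t_0\le s\le t$, a set $A\in\mathcal A_s$ and $M\in\{\theta,\ \theta^2-\zeta\}$, and consider the $\mathfrak B_{t_0}$-measurable map $\omega\mapsto\Phi(\omega):=\E^{\mathcal U|_{\mathfrak B_{t_0}}^\omega}[(M_t-M_s)\mathbf 1_A]$. For every $B\in\mathfrak B_{t_0}$, property~(b) of Theorem~\ref{T1} gives $\int_B\Phi\,\mathrm{d}\mathcal U=\E^{\mathcal U}[(M_t-M_s)\mathbf 1_{A\cap B}]$, and this vanishes because $A\cap B\in\mathfrak B_s$ and $M$ is a $\mathcal U$-martingale; hence $\Phi=0$ $\mathcal U$-a.s. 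For integrability the same identity yields $\E^{\mathcal U}[\E^{\mathcal U|_{\mathfrak B_{t_0}}^{\cdot}}[\theta_t^2+|\xi_t|]]=\E^{\mathcal U}[\theta_t^2+|\xi_t|]<\infty$, which shows both that $\E^{\mathcal U|_{\mathfrak B_{t_0}}^\omega}[\theta_t^2+|\xi_t|]<\infty$ for $\mathcal U$-a.a. $\omega$ and that the asserted global bound $\E^{\mathcal U}[\E^{\mathcal U|_{\mathfrak B_{t_0}}^{\cdot}}[\xi_t]]<\infty$ holds. Collecting the countably many null sets, using the constancy of $\zeta_{t_0}$ under $\mathcal U|_{\mathfrak B_{t_0}}^\omega$, and upgrading from rational times and from $\mathcal A_s$ to all of $\mathfrak B_s$ as described above, one obtains (ii).

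For (ii)$\Rightarrow$(i), fix rational $t_0\le s\le t$, $A\in\mathcal A_s$ and $M\in\{\theta,\ \theta^2-\xi\}$. By hypothesis $\E^{\mathcal U|_{\mathfrak B_{t_0}}^\omega}[(M_t-M_s)\mathbf 1_A]=0$ for $\mathcal U$-a.a. $\omega$, so integrating in $\omega$ and again using property~(b) of Theorem~\ref{T1} gives $\E^{\mathcal U}[(M_t-M_s)\mathbf 1_A]=0$; a Dynkin argument (the admissible $A$ form a $\lambda$-system, since $M_t-M_s$ is $\mathcal U$-integrable) extends this to all $A\in\mathfrak B_s$, and the passage to all real $t_0\le s\le t$ is as above. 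Because $\theta^2-\zeta=(\theta^2-\xi)-\zeta_{t_0}$ with $\zeta_{t_0}\in\mathfrak B_{t_0}\subset\mathfrak B_s$, this also shows that $\theta^2-\zeta$ is a $\mathcal U$-martingale on $[t_0,\infty)$; together with $\E^{\mathcal U}[\theta_t^2]=\E^{\mathcal U}[\theta_{t_0}^2]+\E^{\mathcal U}[\E^{\mathcal U|_{\mathfrak B_{t_0}}^{\cdot}}[\xi_t]]<\infty$ (the stated bound plus square-integrability of $\theta_{t_0}$) we conclude that $(\theta_t)_{t\ge t_0}$ is a square-integrable $\mathcal U$-martingale with quadratic variation $\zeta$, which is statement (i) for the process on $[t_0,\infty)$.

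I expect no deep difficulty: the entire substance is the disintegration identity of Theorem~\ref{T1}. The one genuinely delicate point is the exchange of quantifiers — producing a single $\mathcal U$-null set outside of which the martingale identities hold simultaneously for all $s\le t$ and all $A\in\mathfrak B_s$ — which is exactly where countable generation of $(\mathfrak B_t)$, path-continuity, uniform integrability, and the monotone-class argument enter, and where one must keep track of the harmless shift $\xi=\zeta-\zeta_{t_0}$.
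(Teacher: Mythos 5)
The paper does not actually prove Proposition \ref{prop:B1}: it is imported essentially verbatim from \cite{FlaRom}, Proposition B.1, and used as a black box in Propositions \ref{prop:4.4} and \ref{prop:4.5}, so there is no internal proof to compare against. Your argument is the standard regular-conditional-probability proof and is, in substance, the one behind the cited result: encode ``square-integrable martingale with quadratic variation'' through the test identities $\mathbb{E}[(P_t-P_s)\mathbf{1}_A]=0$ for $P\in\{\theta,\theta^2-\zeta\}$ over a countable family of rational times and generating sets, transfer these between $\mathcal U$ and $\mathcal U|^{\omega}_{\mathfrak B_{t_0}}$ via the disintegration identity of Theorem \ref{T1} (which, strictly speaking, is stated only for product sets and must first be extended by a monotone-class argument to general integrable functionals of the whole path, the conditional law being viewed as concentrated on paths agreeing with $\omega$ on $[0,t_0]$), collect the countably many null sets, and upgrade to all real times and all of $\mathfrak B_s$ by path continuity, uniform integrability and a $\pi$--$\lambda$ argument. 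Your reading of the misprinted $\xi$ as the restarted variation $\zeta-\zeta_{t_0}$, whose $\mathfrak B_{t_0}$-measurable shift is $\mathcal U|^{\omega}_{\mathfrak B_{t_0}}$-a.s.\ constant, is the right one.

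Two caveats, both really defects of the transcribed statement, which you only partially flag and should make explicit. First, statement (i) must be read as concerning $(\theta_t)_{t\geq t_0}$, not $(\theta_t)_{t\geq 0}$: condition (ii) carries no information about $[0,t_0]$, and this restricted reading is exactly how the paper uses the converse in Proposition \ref{prop:4.5}(f), where the $[0,T]$ part is patched separately; you acknowledge this only in your final sentence. Second, in (ii)$\Rightarrow$(i) you invoke ``square-integrability of $\theta_{t_0}$'', which is not among the printed hypotheses. With the integrability side condition imposed only on $\xi_t$, condition (ii) controls increments only, via $\mathbb{E}^{\mathcal U|^{\omega}_{\mathfrak B_{t_0}}}[(\theta_t-\theta_{t_0})^2]=\mathbb{E}^{\mathcal U|^{\omega}_{\mathfrak B_{t_0}}}[\xi_t]$; taking $\theta_t\equiv\theta_{t_0}$ a $\mathfrak B_{t_0}$-measurable variable outside $L^2(\mathcal U)$ and $\zeta\equiv\zeta_{t_0}$ satisfies (ii) but not (i). Hence the converse needs the additional assumption $\theta_{t_0}\in L^2(\mathcal U)$ (equivalently, given the conditional martingale property, the side condition may be imposed on $\mathbb{E}^{\mathcal U}\big[\mathbb{E}^{\mathcal U|^{\cdot}_{\mathfrak B_{t_0}}}[\theta_t^2]\big]$ instead of on $\xi_t$). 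This is harmless where the paper applies the converse, since in the reconstruction step the square integrability at time $T$ of $\mathscr M(\bfphi)$ under $U\otimes_T Q$ is supplied by the martingale property of $\mathscr M(\bfphi)$ under $U$ on $[0,T]$; but a self-contained proof should state this hypothesis rather than use it tacitly.
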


\begin{Proposition}[\cite{FlaRom}, Prop. B.4]\label{prop:B2}
Let $\alpha$ and $\beta$ be two real-valued adapted processes on $\Omega$ such that $\beta$ is non-decreasing and $\theta=\alpha-\beta$ is left lower semi-continuous.
Let  $t_0\geq0$. The following conditions are equivalent.
\begin{enumerate}
\item[(i)] $(\theta_t)_{t\geq t_0}$ is an almost sure $((\mathfrak B_t)_{t\geq t_{0}},\mathcal U)$-supermartingale and we have
$\E^{\mathcal U}[\alpha_t+\beta_t]<\infty$
for all $t\geq t_0$;
\item[(ii)] For $\mathcal U$-a.a. $\omega\in\Omega$ the process  $(\theta_t)_{t\geq t_0}$ is an almost sure $((\mathfrak B_t)_{t\geq0},\mathcal U|_{\mathfrak B_{t_0}}^\omega)$-supermartingale and we have
$$\E^{\mathcal U|_{\mathfrak B_{t_0}}^\omega}[\alpha_t+\beta_t]<\infty,\quad \E^{\mathcal U}\Big[\E^{\mathcal U|_{\mathfrak B_{t_0}}^\cdot}[\alpha_t+\beta_t]\Big]<\infty,$$
for all $t\geq t_0$.
\end{enumerate}
\end{Proposition}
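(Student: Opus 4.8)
The statement is a disintegration/reconstruction principle for almost sure supermartingales, and the plan is to imitate the classical conditioning arguments for genuine supermartingales (see \cite{StVa}, and Proposition \ref{prop:B1} for the martingale analogue) while tracking the two features special to this setting: the inequality \eqref{eq:def3.2} is only imposed at \emph{regular} times, and $\theta$ is merely \emph{left lower semicontinuous}. First I would normalise by replacing $\beta$ with $\beta-\beta_{t_0}$ and adding a constant to $\alpha$ so that $\alpha,\beta\ge0$; the finiteness assumptions then make each of $\alpha_t,\beta_t,\theta_t$ integrable (with respect to $\mathcal U$ under (i), and with respect to $\mathcal U|_{\mathfrak B_{t_0}}^{\omega}$ for $\mathcal U$-a.a.\ $\omega$ under (ii)). The basic tool will be the disintegration identity (\cite{StVa}, cf.\ Theorem \ref{T1})
\begin{align}\label{eq:Bdisint}
\E^{\mathcal U}[g]=\E^{\mathcal U}\Big[\E^{\mathcal U|_{\mathfrak B_{t_0}}^{\cdot}}[g]\Big],\qquad g\in L^1(\mathcal U),
\end{align}
where $\E^{\mathcal U|_{\mathfrak B_{t_0}}^{\cdot}}[g]$ is a version of $\E^{\mathcal U}[g\,|\,\mathfrak B_{t_0}]$; throughout I would use the adaptedness and left lower semicontinuity of $\theta$ to keep all the conditional expectations below jointly measurable in $(\omega,\cdot)$.

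For (i)$\Rightarrow$(ii), the two integrability claims are read off from \eqref{eq:Bdisint} with $g=\alpha_t+\beta_t$. For the supermartingale property I would fix a regular time $s\ge t_0$ of $\theta$ under $\mathcal U$; for $t\ge s$, $A\in\mathfrak B_s$ and $B\in\mathfrak B_{t_0}\subset\mathfrak B_s$, applying \eqref{eq:def3.2} to $A\cap B$ and pulling $\mathbf 1_B$ inside the conditional expectation in \eqref{eq:Bdisint} gives
\begin{align*}
\E^{\mathcal U}\Big[\mathbf 1_B\,\E^{\mathcal U|_{\mathfrak B_{t_0}}^{\cdot}}[(\theta_s-\theta_t)\mathbf 1_A]\Big]=\E^{\mathcal U}\big[(\theta_s-\theta_t)\mathbf 1_{A\cap B}\big]\ge0 ,
\end{align*}
and since $B$ is an arbitrary element of $\mathfrak B_{t_0}$ and the inner conditional expectation is $\mathfrak B_{t_0}$-measurable, it is $\ge0$ for $\mathcal U$-a.a.\ $\omega$. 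Running this over $t$ in a countable dense subset of $[s,\infty)$ and $A$ in a countable algebra generating $\mathfrak B_s$, discarding a $\mathcal U$-null set, I would extend the resulting inequality to all $A\in\mathfrak B_s$ by approximation and, for fixed $A$, to all $t\ge s$ by Fatou's lemma along $t'\uparrow t$ through the dense set --- here the left lower semicontinuity of $\theta$ and the bound $\theta_{t'}\ge-\beta_t$ on $[t_0,t]$ are exactly what is needed. This shows that for each regular $s$, the set of $\omega$ for which $s$ is not regular for $\theta$ under $\mathcal U|_{\mathfrak B_{t_0}}^{\omega}$ is $\mathcal U$-null; a Fubini argument --- justified by the joint measurability above and the co-nullity of the regular times of $\theta$ under $\mathcal U$ --- then converts this into: for $\mathcal U$-a.a.\ $\omega$, $\theta$ is an almost sure $((\mathfrak B_t)_{t\ge t_0},\mathcal U|_{\mathfrak B_{t_0}}^{\omega})$-supermartingale.

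For (ii)$\Rightarrow$(i), $\E^{\mathcal U}[\alpha_t+\beta_t]<\infty$ is again immediate from \eqref{eq:Bdisint} and the second integrability hypothesis. For \eqref{eq:def3.2} I would consider the set of pairs $(\omega,s)$ such that $s$ is an exceptional time of $\theta$ under $\mathcal U|_{\mathfrak B_{t_0}}^{\omega}$; by (ii) its $\omega$-sections are Lebesgue-null for $\mathcal U$-a.a.\ $\omega$, so by Fubini a.a.\ of its $s$-sections are $\mathcal U$-null. Hence for a.a.\ $s\ge t_0$ one has, for $\mathcal U$-a.a.\ $\omega$, $\E^{\mathcal U|_{\mathfrak B_{t_0}}^{\omega}}[(\theta_s-\theta_t)\mathbf 1_A]\ge0$ for all $t\ge s$ and $A\in\mathfrak B_s$, and integrating in $\omega$ via \eqref{eq:Bdisint} gives $\E^{\mathcal U}[(\theta_s-\theta_t)\mathbf 1_A]\ge0$.

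The routine part is the measure theory of conditioning. The hard part will be the careful bookkeeping of the two Lebesgue-null sets of exceptional times on the two sides of the equivalence, so that the Fubini interchanges of ``a.a.\ time'' and ``$\mathcal U$-a.a.\ $\omega$'' are legitimate, together with the passage in (i)$\Rightarrow$(ii) from a countable family of test times and events to all of them. Both of these rest squarely on $\theta$ being adapted and left lower semicontinuous --- which provides the required joint measurability and the Fatou estimate --- and on $\beta$ being non-decreasing, which bounds $\theta$ from below by an integrable random variable on every bounded time interval; that is precisely why these hypotheses appear in the statement.
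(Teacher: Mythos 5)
The first thing to note is that the paper itself contains no proof of Proposition \ref{prop:B2}: it is imported verbatim, with attribution, from \cite{FlaRom} (Prop.\ B.4) and used as a black box in Propositions \ref{prop:4.4} and \ref{prop:4.5}. So your argument can only be measured against the original appendix of \cite{FlaRom}, and there your outline is indeed the expected one: the Krylov/Stroock--Varadhan conditioning scheme, i.e.\ the tower identity for the regular conditional probabilities of Theorem \ref{T1}, transfer of the inequality \eqref{eq:def3.2} for a fixed regular time $s$ by testing against $B\in\mathfrak B_{t_0}$, reduction to a countable dense set of times and a countable algebra of events in $\mathfrak B_s$ followed by approximation (correctly an algebra with approximation in measure, not merely a generating $\pi$-system, since an inequality of measures does not pass through a Dynkin argument), a Fatou/left lower semicontinuity step to recover all $t\ge s$, and a Fubini exchange of ``a.a.\ time'' with ``$\mathcal U$-a.a.\ $\omega$''. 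Your treatment of (ii)$\Rightarrow$(i) is clean.

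The one genuine gap is your opening normalisation. Adding a constant to $\alpha$ cannot make it nonnegative: $\alpha$ is only assumed to be a real-valued adapted process, not uniformly bounded below. This is not cosmetic, because the two places where the argument really needs something are exactly where you invoke $\alpha\ge0$: the Fatou passage from dense times $t'\uparrow t$ to all $t\ge s$ requires an integrable minorant for $\theta_{t'}$ on $[s,t]$, which you produce as $\theta_{t'}\ge-\beta_t$, i.e.\ from $\alpha\ge0$ together with the monotonicity of $\beta$; and upgrading the integrability claim in (ii) from a $t$-dependent $\mathcal U$-null set of $\omega$ (which is all the disintegration identity yields for each fixed $t$) to a single null set valid for all $t\ge t_0$ rests on the same kind of lower bound plus monotonicity -- a point your sketch passes over. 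So either a positivity (or uniform lower bound) hypothesis on $\alpha$ must be added -- as in the source \cite{FlaRom} and as is satisfied in the paper's application, where $\alpha^n_\tau$ and $\beta^n_\tau$ are built from the energy and the dissipation and are nonnegative -- or the minorant $-\beta_t$ must be replaced by one available under the stated hypotheses; as written, the reduction step is unjustified. The remaining bookkeeping you defer (joint measurability in $(s,\omega)$, the $\omega$-dependence of the approximating measures) is routine and does go through.
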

We finally mention a result which allows to obtain an estimate for the tail-probability of an almost sure supermartingale.
\begin{Proposition}[\cite{FlaRom}, Cor. B.3]\label{cor:B3FR}
Let $\theta$ be a real-valued, left lower semi-continuous and $(\mB_{t})$-adapted processes on $\Omega$. Assume that $(\theta_t)_{t\geq 0}$ is an almost sure $((\mathfrak B_t)_{t\geq0},U)$-supermartingale.
Assume further that we have $\theta_t=\alpha_t-\beta_t$, where $\alpha_{t}$ and $\beta_{t}$ are positive and $(\beta_{t})_{t\geq 0}$ is non-decreasing. Let $a$ be a regular time-point of $\theta$ and $b>a$. Then we have
\begin{align*}
\lambda\, \mathcal U\bigg[\sup_{a\leq t\leq b}\alpha_t\geq\lambda\bigg]\leq \,2\Big(\E^{\mathcal U}\theta_a+\E^{\mathcal U}\lim_{t\nearrow b}\theta_t+\E^{\mathcal U}\beta_b\Big)\quad\forall\lambda>0.
\end{align*}
\end{Proposition}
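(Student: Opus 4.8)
The plan is to deduce the estimate from the classical Doob maximal inequality for discrete--time supermartingales. Two reductions are needed: first, passing from the merely positive and merely left lower semi-continuous process $\alpha$ to the almost sure supermartingale $\theta$, which is what accounts for the appearance of $\E^{\mathcal U}\beta_b$; and second, passing from continuous to discrete time, where the ``only for a.a.\ $s$'' character of the supermartingale property in Definition~\ref{def:3.2} forces us to sample $\theta$ exclusively along its \emph{regular} times. Throughout we may assume $\E^{\mathcal U}\theta_a<\infty$ and $\E^{\mathcal U}\beta_b<\infty$, since otherwise there is nothing to prove; then $\theta_t\in L^1(\mathcal U)$ for every $t\le b$.

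For the first reduction, use that $\beta$ is non-decreasing and $\alpha\ge 0$: on $[a,b]$ we have $\alpha_t=\theta_t+\beta_t\le\theta_t+\beta_b$, whence $\sup_{a\le t\le b}\alpha_t\le\big(\sup_{a\le t\le b}\theta_t\big)+\beta_b$, and therefore, for every $\lambda>0$,
\[
\mathcal U\Big[\sup_{a\le t\le b}\alpha_t\ge\lambda\Big]\le \mathcal U\Big[\sup_{a\le t\le b}\theta_t\ge\tfrac{\lambda}{2},\ \beta_b<\tfrac{\lambda}{2}\Big]+\mathcal U\Big[\beta_b\ge\tfrac{\lambda}{2}\Big]\le \mathcal U\Big[\sup_{a\le t\le b}\theta_t\ge\tfrac{\lambda}{2}\Big]+\frac{2}{\lambda}\,\E^{\mathcal U}\beta_b
\]
by the Chebyshev inequality. (One may alternatively avoid the splitting altogether by working with the genuinely adapted process $\tilde\theta_t:=\theta_t+\E^{\mathcal U}[\beta_b\,|\,\mathfrak B_t]$, which is again an almost sure supermartingale and satisfies $\tilde\theta_t\ge\alpha_t\ge 0$ on $[a,b]$; the splitting version however makes the constant $2$ and the three terms of the statement transparent.) In either case it remains to prove a maximal inequality for $\theta$ on $[a,b]$.

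For the second reduction, let $R\subset(0,\infty)$ denote the set of regular times of $\theta$; by Definition~\ref{def:3.2} its complement is Lebesgue--null, so $R$ is dense, $R\cap(a,b)$ has full measure in $(a,b)$, and $a\in R$ by hypothesis. Fix a countable set $a=u_0<u_1<u_2<\cdots$ with $\{u_i:i\ge 1\}\subset R\cap(a,b)$ dense in $(a,b)$ and $u_i\nearrow b$, and put $F_n=\{u_0,\dots,u_n\}$. Since each $u_i$ is a regular time, Definition~\ref{def:3.2} gives $\E^{\mathcal U}[\theta_t\mathbf 1_A]\le\E^{\mathcal U}[\theta_{u_i}\mathbf 1_A]$ for all $t\ge u_i$ and all $A\in\mathfrak B_{u_i}$; hence $(\theta_s)_{s\in F_n}$, indexed in increasing order, is a genuine discrete--time $(\mathfrak B_s)_{s\in F_n}$-supermartingale, and the classical Doob maximal inequality yields, for every $\mu>0$,
\[
\mu\,\mathcal U\Big[\max_{s\in F_n}\theta_s\ge\mu\Big]\le \E^{\mathcal U}\theta_{u_0}+\E^{\mathcal U}\theta_{u_n}^{-}=\E^{\mathcal U}\theta_a+\E^{\mathcal U}\theta_{u_n}^{-}.
\]
Now let $n\to\infty$. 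By the left lower semi-continuity of $\theta$ and density of $\{u_i\}$ in $[a,b)$, every value $\theta_t$, $t\in[a,b]$, is dominated by $\sup_i\theta_{u_i}$ (for $t=b$ via $\theta_b\le\liminf_{s\nearrow b}\theta_s=\lim_{s\nearrow b}\theta_s$), so the events increase to $\{\sup_{a\le t\le b}\theta_t>\mu\}$, and a further passage $\mu'\nearrow\mu$ upgrades this to ``$\ge\mu$''; on the right-hand side $\theta_{u_n}^{-}=(\beta_{u_n}-\alpha_{u_n})^{+}\le\beta_{u_n}\le\beta_b$, so $(\theta_{u_n}^{-})_n$ is dominated by $\beta_b\in L^1$ and its limit is the negative part of the left limit $\lim_{t\nearrow b}\theta_t$. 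This gives $\mu\,\mathcal U[\sup_{a\le t\le b}\theta_t\ge\mu]\le\E^{\mathcal U}\theta_a+\E^{\mathcal U}\big(\lim_{t\nearrow b}\theta_t\big)^{-}$. Inserting this with $\mu=\lambda/2$ into the first displayed estimate and multiplying by $\lambda$ yields $\lambda\,\mathcal U[\sup_{a\le t\le b}\alpha_t\ge\lambda]\le 2\big(\E^{\mathcal U}\theta_a+\E^{\mathcal U}\lim_{t\nearrow b}\theta_t+\E^{\mathcal U}\beta_b\big)$, the middle term being understood as the (non-negative) negative part of the left limit, which is in any case $\le\E^{\mathcal U}\beta_b$.

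The main obstacle is precisely the ``almost sure'' nature of the supermartingale property: the exceptional times of $\theta$, although forming a Lebesgue--null set, may be dense, so one cannot apply the discrete maximal inequality along an arbitrary countable grid. The remedy is to sample $\theta$ only within the full-measure set of regular times -- this is legitimate because $a$ is assumed regular and, for a regular time $s$, the inequality of Definition~\ref{def:3.2} holds against \emph{every} later time $t$, regular or not -- and then to recover the supremum over the entire \emph{closed} interval $[a,b]$ from these countably many values, which is exactly where the left lower semi-continuity of $\theta$ is used (and also why the endpoint $b$ must be handled through the left limit $\lim_{t\nearrow b}\theta_t$ rather than through $\theta_b$ directly). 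The remaining ingredients -- the reduction $\alpha\le\theta+\beta_b$ on $[a,b]$, the Chebyshev estimate for $\beta_b$, and the passage to the limit -- are routine.
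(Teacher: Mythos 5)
This proposition is quoted from [FlaRom, Cor.~B.3] and the paper gives no proof of it, so there is nothing internal to compare against; judged on its own, your argument is essentially the standard one behind that corollary: split off $\beta$ via $\alpha_t\le\theta_t+\beta_b$ on $[a,b]$ and Chebyshev, then apply Doob's discrete maximal inequality along countably many \emph{regular} times (legitimate because the inequality in Definition~\ref{def:3.2} at a regular time $s$ holds against every later time), and recover the supremum over the whole interval from left lower semi-continuity, treating the endpoint $b$ through the left limit. Your remark about the middle term is in fact essential rather than cosmetic: what your argument produces, and what is true (and what Remark~\ref{rem:moments}(b) needs), is the bound with $\mathbb{E}\big[(\lim_{t\nearrow b}\theta_t)^-\big]$; the inequality as literally printed, with $\mathbb{E}\lim_{t\nearrow b}\theta_t$, is false. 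Indeed, take on $[0,1]$ the deterministic example $\beta_t=M\mathbf{1}_{[1/2,1]}(t)$, $\alpha_t=0$ for $t<1/2$ and $\alpha_t=2M(1-t)$ for $t\ge 1/2$: then $\theta=\alpha-\beta$ is continuous and non-increasing (so an a.s. supermartingale with every time regular), $\theta_0=0$, $\lim_{t\nearrow 1}\theta_t=-M$, $\beta_1=M$, so the right-hand side as printed is $0$ while $\lambda\,\mathcal U[\sup_{[0,1]}\alpha\ge\lambda]=M$ for $\lambda=M$. So a superscript minus has evidently been lost in the quotation, and your reading (which is the one in [FlaRom], and satisfies $(\lim_{t\nearrow b}\theta_t)^-\le\beta_b$) is the correct target.

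Two repairs to your write-up are needed, both minor. First, the sampling set you ``fix'' does not exist: an increasing sequence $u_0<u_1<\cdots$ with $u_i\nearrow b$ has only finitely many terms below any $c<b$, so it cannot be dense in $(a,b)$. You need both properties, but for different purposes, so take instead a countable set $D$ of regular times dense in $(a,b)$, exhaust it by finite sets $D_n\nearrow D$, and apply discrete Doob to $F_n=\{a\}\cup D_n$ ordered increasingly; then $\max F_n$ is non-decreasing with $\max F_n\nearrow b$, and the rest of your argument (left lower semi-continuity to dominate $\sup_{[a,b]}\theta$ by $\sup_{D\cup\{a\}}\theta$, dominated convergence for $\theta_{\max F_n}^-\le\beta_b$) goes through verbatim. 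Second, you implicitly assume the left limit $\lim_{t\nearrow b}\theta_t$ exists; either take this as presupposed by the statement, or note that $(\theta_s)_{s\in D\cup\{a\}}$ is a discrete supermartingale with $\sup_s\mathbb{E}|\theta_s|\le\mathbb{E}\theta_a+2\,\mathbb{E}\beta_b<\infty$, so the a.s. limit along $\max F_n\nearrow b$ exists by the supermartingale convergence theorem, which is all your dominated-convergence step requires. With these adjustments your integrability reduction ($\theta_t^-\le\beta_b$ and $\mathbb{E}\theta_t\le\mathbb{E}\theta_a$ for $t\in[a,b]$, using that $a$ is regular) and the constant-$2$ bookkeeping are correct.
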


\section{The compressible Navier--Stokes system}
\label{sec:NS}

In this section we are concerned with martingale solutions the compressible Navier--Stokes system. We present the concept of dissipative martingale solutions living on a complete probability space  $
\left( \mathcal{O}, \mathfrak{F}, (\mathfrak{F}_t)_{t \geq 0}, \mathcal{P} \right)
$
with a complete right-continuous filtration $(\mathfrak{F}_t)_{t \geq 0}$.
Furthermore, we introduce a solution to the martingale problem associated with \eqref{E1}--\eqref{E3} which is a probability law on the space of trajectories.
In Proposition \ref{prop:def=D} we show that both concepts are equivalent.

\subsection{Driving force}
\label{ss:force}

In this subjection we give the precise assumptions on the stochastic forcing
in the momentum equation \eqref{E2}.
The  stochastic process $W$ is a cylindrical $(\mf_t)$-Wiener process in a separable Hilbert space $\mathfrak{U}$. It is formally given by the expansion $W(t)=\sum_{k=1}^\infty e_k\, W_k(t)$ where $(W_k)_{k\in\N}$ is a sequence of mutually independent real-valued  Wiener processes relative to $(\mf_t)_{t\geq0}$ and $(e_k)_{k\in\N}$ is a complete orthonormal system in  $\mathfrak{U}$.
Accordingly, the diffusion coefficient ${\mathbb G}$ is defined as a superposition operator
$\mathbb G(\varrho, \bq):\mathfrak{U}\rightarrow L^1(\torN,R^{N})$,
$${\mathbb G}(\varrho,\bq)e_k=\bfG_k(\cdot,\varrho(\cdot),\bq(\cdot)).$$
The coefficients $\bfG_{k} = \bfG_k (x, \varrho, \vc{q}) :\torN \times[0,\8)\times R^N \rightarrow R^N$ are $C^1$-functions such that there exist constants $(g_k)_{k\in\N}\subset  [0,\infty)$ with $\sum_{k=1}^\8 g_k^2<\8$ and uniformly in $x\in\mt$ it holds
\begin{align}\label{growth1'}
|\bfG_{k}(x,\varrho,\bfq)|&\leq g_k(\varrho+|\bfq|) ,\\
|\nabla_{\varrho,\bfq} \bfG_{k}(x,\varrho,\bfq)|&\leq g_k\label{growth2'}.
\end{align}
Finally, we define the auxiliary space $\mathfrak{U}_0\supset\mathfrak{U}$ via
$$\mathfrak{U}_0=\bigg\{v=\sum_{k\geq1}\alpha_k e_k;\;\sum_{k\geq1}\frac{\alpha_k^2}{k^2}<\infty\bigg\},$$
endowed with the norm
$$\|v\|^2_{\mathfrak{U}_0}=\sum_{k\geq1}\frac{\alpha_k^2}{k^2},\qquad v=\sum_{k\geq1}\alpha_k e_k.$$
Note that the embedding $\mathfrak{U}\hookrightarrow\mathfrak{U}_0$ is Hilbert-Schmidt. Moreover, trajectories of $W$ are $\prst$-a.s. in $C([0,T];\mathfrak{U}_0)$ (see \cite{daprato}).

\subsection{Dissipative martingale solution}

In what follows, we assume that the pressure-density state equation is given by
\[
p(\vr) = a \vr^\gamma, \ a > 0, \ \gamma > \frac{N}{2},
\]
and the corresponding pressure potential reads as
$$
P(\vr)=\frac{a}{\gamma-1}\vr^{\gamma}.
$$
We give a rigorous definition of a solution to \eqref{E1}--\eqref{E3}.

\begin{Definition}[Dissipative martingale solution]\label{def:sol}
The quantity  $\big((\mathcal O,\mf,(\mf_t)_{t\geq0},\prst),\varrho,\bfu,W)$
is called a {\em dissipative martingale solution} to \eqref{E1}--\eqref{E3} provided
\begin{enumerate}[(a)]
\item $(\mathcal O,\mf,(\mf_t)_{t\geq0},\prst)$ is a stochastic basis with a complete right-continuous filtration;
\item $W$ is a cylindrical $(\mf_t)$-Wiener process;
\item the density $\vr\geq0$ belongs to the space
$
C_{\rm{loc}}([0,\infty); (L^{\gamma}(\mt),w))
$
$ \mathcal{P}\mbox{-a.s.}$ and is $(\mf_t)$-adapted;

\item the momentum $\vr\bfu$
belongs to the space
$
 C_{\rm{loc}}([0, \infty); (L^\frac{2\gamma}{\gamma+1}(\mt,R^{N}),w))
$
\pas\ and is $(\mf_t)$-adapted;
\item the velocity $\vu$ belongs to $
 L^2_{\rm{loc}}(0, \infty; W^{1,2}(\mt,R^N))
$
\pas and is $(\mathfrak{F}_{t})$-adapted;
\item the total energy
\[
 E(t) = \int_{\mt}\left[ \frac{1}{2} \frac{|\varrho\bfu(t)|^2}{\vr(t)} +P(\varrho(t)) \right]\dx
\]
belongs to the space
$
L^\infty_{\rm{loc}}(0,\infty)
$
\pas;
\item the equation of continuity
\begin{equation*}
\left[ \int_{\mt} \vr \psi \ \dx \right]_{t = 0}^{t = \tau} -
\int_0^\tau \int_{\torN}\varrho\bfu\cdot\Grad\psi\dxt=0
\end{equation*}
holds for all $\tau > 0$, $\psi\in C^1(\torN)$, $\prst$-a.s.;
\item if $b\in C^1(\R)$ such that there exists $M_{b}>0$ with $b'(z)=0$ for all $z\geq M_b$, then
\begin{equation*}
\begin{split}
&\left[ \int_{\mt} b(\vr) \psi \ \dx \right]_{t = 0}^{t = \tau}\\
 &- \int_0^\tau \int_{\torN} b(\varrho)\bfu\cdot\Grad\psi \,\dif x\,\dif t+
\int_0^\tau \int_{\torN} \big(b'(\varrho)\varrho-b(\varrho)\big)\diver\bfu\,\psi \,\dif x\,\dif t=0.
\end{split}
\end{equation*}
for all $\tau > 0$, $\psi \in C^1(\torN)$, $\prst$-a.s.;
\item the momentum equation
\begin{align}
\nonumber
 \left[ \intTN{ \vr \vu \cdot \bfphi } \right]_{t = 0}^{t= \tau}
 &-\int_0^\tau \intTN{ \Big[ \vr \vu \otimes \vu : \Grad \bfphi  +  p(\vr) \Div \bfphi  \Big]  }
\dt\\
&+\int_0^\tau \  \intTN{\mathbb{S}(\Grad \vu) : \Grad \bfphi  } \dt\nonumber\\&= \sum_{k=1}^\infty\int_0^\tau  \left( \intTN{  {\vc{G}_k} (\vr, \vr\vu) \cdot \bfphi } \right) \, \D  W_k
\label{N2}
\end{align}
holds for all $\tau > 0$,  $\bfvarphi\in C^1(\torN;R^N)$, $\prst$-a.s.;
\item
the energy inequality
\begin{equation} \label{N3}
\begin{split}
\frac{1}{n} \left[ \mathbb{E} \Big[ \mathbf 1_{\mathfrak{U}} {E}^n \Big] \right]_{t = \tau_1}^{t = \tau_2}
&+ \mathbb{E} \left[ \mathbf 1_{\mathfrak{U}} \int_{\tau_1}^{\tau_2}  {E}^{n-1} \intTN{ \mathbb{S}(\Grad \vu) : \Grad \vu }
\dt \right] \\
&\leq \mathbb{E} \left[ \mathbf 1_{\mathfrak{U}} \int_{\tau_1}^{\tau_2} {E}^{n-1} \sum_{k = 1}^\infty \intTN{
\vr^{-1} |\mathbf{G}_k (\vr, \vr \vu) |^2 } \dt                           \right] \\
&+ \frac{n-1}{2} \mathbb{E} \left[ \mathbf 1_{\mathfrak{U}} \int_{\tau_1}^{\tau_2} {E}^{n-2} \sum_{k = 1}^\infty \left( \intTN{
 \mathbf{G}_k (\vr, \vr \vu)  } \right)^2 \dt            \right]
\end{split}
\end{equation}
holds for any $n = 0,1,\dots$, any $\tau_2 \geq 0$ and a.a. $\tau_1$, $0 \leq \tau_1 \leq \tau_2$, including $\tau_1 = 0$, and any
$\mathfrak{U} \in \mathfrak{F}_{\tau_1}$.
\end{enumerate}
\end{Definition}

\begin{Remark} \label{ER2}

It is worth noting that it is enough to require validity of the integral identities (g)--(i)
for a countable family of test function that may be formed by the trigonometric polynomials.

\end{Remark}

Note that unlike the density $\vr$ and the momentum $\vr\vu$, the velocity field $\vu$ is not a stochastic process in the classical sense as it is only defined for a.a. time. Thus, adaptedness of $\vu$ to the filtration $(\mathfrak{F}_t)_{t \geq 0}$ shall be understood  in the sense of random distributions introduced in \cite[Section 2.2]{BFHbook}. Namely, the random variable
\[
\int_0^\infty \intTN{ \vu \cdot \bfphi } \dt
\]
is $\mathfrak{F}_\tau$ measurable whenever $\bfphi \in C^\infty_c([0, \tau) \times \torN, R^N)$. This can be reformulated
by means of the following observation.

\begin{Lemma} \label{LE1}

Let
$
\left(\Omega, \mathfrak{F}, (\mathfrak{F}_t)_{t \geq 0}, \mathcal{P} \right)
$
be a stochastic basis.
Then the following statements are equivalent:

\begin{enumerate} [(a)]

\item

$\vu$ is an $(\mathfrak{F}_t)$-adapted  random distribution taking values in $L^2_{{\rm loc}}(0,\infty; W^{1,2}(\torN, R^N))$ $\mathcal{P}$-a.s.;

\item the stochastic process
\[
\vc{U}:\
t \mapsto \int_0^t \vu (s,\cdot) \, {\rm d}s \in W^{1,2}(\torN, R^N)
\]
is $(\mathfrak{F}_t)$-adapted and takes values in $W^{1,2}_{\rm{loc}}(0,\infty;W^{1,2}(\mt,R^{N}))$ $\mathcal{P}$-a.s.
\end{enumerate}

\end{Lemma}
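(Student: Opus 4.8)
The plan is to prove the two implications separately, using the fact that $(\mathfrak{F}_t)$-measurability of a random quantity can be tested against a countable family of test functions (cf. Remark \ref{ER2}), and that $\vc{U}(t) = \int_0^t \vu(s,\cdot)\,\mathrm{d}s$ is the primitive in time of $\vu$.

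First, I would unwind the definitions. By the notion of an $(\mathfrak{F}_t)$-adapted random distribution in the sense of \cite[Section 2.2]{BFHbook} recalled just above the statement, (a) means precisely that for every $\tau>0$ and every $\bfphi\in C^\infty_c([0,\tau)\times\torN,R^N)$ the random variable $\int_0^\infty\intTN{\vu\cdot\bfphi}\dt$ is $\mathfrak{F}_\tau$-measurable; the integrability condition $\vu\in L^2_{\rm loc}(0,\infty;W^{1,2})$ $\mathcal{P}$-a.s.\ is the same in both (a) and (b) (it is equivalent to $\vc{U}$ having trajectories in $W^{1,2}_{\rm loc}(0,\infty;W^{1,2})$, since $\vc{U}$ is exactly the time primitive of $\vu$ and $\partial_t\vc{U}=\vu$ in the sense of distributions). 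So the content of the lemma is the equivalence of the two measurability statements, and it suffices to prove, for each fixed $\tau>0$:
\begin{align*}
\Big(\forall\,\bfphi\in C^\infty_c([0,\tau)\times\torN,R^N):\ \textstyle\int_0^\infty\intTN{\vu\cdot\bfphi}\dt\ \text{is $\mathfrak{F}_\tau$-measurable}\Big)\ \Longleftrightarrow\ \vc{U}(\tau)\ \text{is $\mathfrak{F}_\tau$-measurable},
\end{align*}
together with the obvious remark that once $\vc{U}(\tau)$ is $\mathfrak{F}_\tau$-measurable for every $\tau$, then $\vc{U}$ is $(\mathfrak{F}_t)$-adapted (it is moreover continuous, hence progressively measurable).

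For the implication (a)$\Rightarrow$(b): fix $\tau>0$ and a test function $\psi\in C^\infty(\torN,R^N)$. For $\delta>0$ small choose $\chi_\delta\in C^\infty_c([0,\tau))$ with $0\le\chi_\delta\le1$, $\chi_\delta\equiv1$ on $[0,\tau-\delta]$ and $\mathrm{supp}\,\chi_\delta\subset[0,\tau)$; then $\bfphi_\delta(t,x):=\chi_\delta(t)\psi(x)$ is admissible in (a), so $\int_0^\infty\chi_\delta(t)\intTN{\vu(t)\cdot\psi}\dt$ is $\mathfrak{F}_\tau$-measurable. Letting $\delta\to0$ and using $\vu\in L^1_{\rm loc}(0,\infty;W^{1,2})$ (consequence of the $L^2_{\rm loc}$ bound) together with dominated convergence gives that $\int_0^\tau\intTN{\vu(t)\cdot\psi}\dt = \intTN{\vc{U}(\tau)\cdot\psi}$ is $\mathfrak{F}_\tau$-measurable as an a.s.\ limit of $\mathfrak{F}_\tau$-measurable random variables (completeness of $\mathfrak{F}_\tau$). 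Since this holds for a countable dense family of $\psi$'s (trigonometric polynomials) and $\vc{U}(\tau)\in W^{1,2}(\torN,R^N)$ a.s., this shows $\vc{U}(\tau)$ is $\mathfrak{F}_\tau$-measurable with values in $W^{1,2}$.

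For the converse (b)$\Rightarrow$(a): given $\bfphi\in C^\infty_c([0,\tau)\times\torN,R^N)$, integrate by parts in time,
\begin{align*}
\int_0^\infty\intTN{\vu\cdot\bfphi}\dt = \int_0^\tau\intTN{\vu(t,\cdot)\cdot\bfphi(t,\cdot)}\dt = -\int_0^\tau\intTN{\vc{U}(t,\cdot)\cdot\partial_t\bfphi(t,\cdot)}\dt,
\end{align*}
using $\vc{U}(0)=0$ and $\bfphi(\tau,\cdot)=0$ (the boundary term at $t=\tau$ vanishes since $\mathrm{supp}\,\bfphi\subset[0,\tau)$). The right-hand side is a Bochner/Riemann integral over $[0,\tau]$ of the continuous (hence bounded and measurable) $W^{1,2}$-valued process $t\mapsto\vc{U}(t,\cdot)$ against the smooth kernel $\partial_t\bfphi$; approximating the time integral by Riemann sums $-\sum_i \intTN{\vc{U}(t_i,\cdot)\cdot\partial_t\bfphi(t_i,\cdot)}(t_{i+1}-t_i)$ with $t_i\in[0,\tau]$ exhibits it as an a.s.\ limit of $\mathfrak{F}_\tau$-measurable random variables (each $\vc{U}(t_i)$ is $\mathfrak{F}_{t_i}$- hence $\mathfrak{F}_\tau$-measurable, and pairing with a fixed smooth function is continuous), so it is $\mathfrak{F}_\tau$-measurable by completeness of the filtration. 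Finally, the statement about the path space of $\vc{U}$ follows from the a.s.\ identity $\partial_t\vc{U}=\vu$: $\vc{U}\in W^{1,2}_{\rm loc}(0,\infty;W^{1,2})$ a.s.\ is equivalent to $\vu\in L^2_{\rm loc}(0,\infty;W^{1,2})$ a.s.

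\textbf{Main obstacle.} The only genuinely delicate point is the passage between the distributional pairing $\int\intTN{\vu\cdot\bfphi}$ and the pointwise-in-time values $\vc{U}(\tau)$: one must be careful that $\vu$ is merely an equivalence class in time, so $\vc{U}(\tau)$ is only well defined thanks to the Lebesgue differentiation / absolute continuity of the primitive, and the measurability transfer must go through a \emph{countable} family of test functions (trigonometric polynomials, Remark \ref{ER2}) plus completeness of $\mathfrak{F}_\tau$ to handle the a.s.\ limits. Everything else is routine integration by parts in time and Riemann-sum approximation.
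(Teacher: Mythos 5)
Your proof is correct and follows essentially the same route as the paper: the heart of both arguments is the integration-by-parts identity $\int_0^\infty\intTN{\vu\cdot\bfphi}\dt=-\int_0^\infty\intTN{\vc{U}\cdot\partial_t\bfphi}\dt$, which the paper uses to get (b)$\Rightarrow$(a) while calling the other direction obvious. You merely fill in the routine details the paper omits (the temporal cutoffs $\chi_\delta$ and countable test family for (a)$\Rightarrow$(b), Riemann sums and completeness for the measurability transfer), which is fine.
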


\begin{proof}

The implication (a) $\Rightarrow$ (b) is obvious. To show (a) $\Rightarrow$ (b) we observe that
\begin{equation*}
\int_0^\infty \intTN{ \vu \cdot \bfphi } \dt = - \int_0^\infty \intTN{ \vc{U} \cdot \partial_t \bfphi } \dt
\end{equation*}
for any $\bfphi\in C^{\infty}_{c}((0,\infty)\times\mt,R^{N})$
whence the desired conclusion follows from adaptedness of $\vc{U}$.
\end{proof}

We have the following existence result.

\begin{Theorem} \label{thm:existence}

Let $k>\tfrac{N}{2}$ and
let $\Lambda $ be a Borel probability measure defined on the space $W^{-k,2}(\Q) \times W^{-k,2}(\Q,R^N)$ such that
\begin{align*}
\Lambda&\big\{L^1(\Q) \times L^1(\Q,R^N)\big\}=1,\
\Lambda \{ \vr \geq 0 \} = 1, \\ &\quad\Lambda \bigg\{ 0 < \underline{\vr} \leq \intQ{ \vr } \leq \Ov{\vr} < \infty \bigg\} = 1,
\end{align*}
for some deterministic constants $\underline{\vr}$, $\Ov{\vr}$, and
\[
\int_{L^1_x \times L^1_x}  \left|\, \intQ{ \left[ \frac{1}{2} \frac{|\vc{q} |^2}{\vr} + P(\vr) \right] }
\right|^n   \D \Lambda \textcolor{red}{\leq c(n)}
\]
for $n=1,2,\dots$. Let the diffusion coefficients $\mathbb{G} = (\mathbf{G}_{k})_{k\in\N}$ be continuously differentiable
satisfying \eqref{growth1'} and \eqref{growth2'}.
Then there is a dissipative martingale solution to \eqref{E1}--\eqref{E3} in the sense of Definition \ref{def:sol} with $\Lambda=\mathcal{L}[\vr(0),\vr\bfu(0)]$. \end{Theorem}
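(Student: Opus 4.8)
The plan is to follow the now-standard stochastic compactness (Skorokhod) method, the same strategy used in \cite{BH,BFH2}, and then upgrade it so that the constructed object satisfies the energy inequality \eqref{N3} for \emph{all} moments $n$ simultaneously. First I would introduce an approximation scheme: regularize by adding artificial viscosity $\varepsilon\Delta\vr$ to the continuity equation, an artificial pressure term $\delta\vr^\beta$ with $\beta$ large to the momentum balance, and a Faedo--Galerkin truncation of the velocity equation, together with a suitable truncation of the (possibly unbounded-support) initial law $\Lambda$ so that all approximate initial energies are uniformly bounded; this is exactly the layered scheme of \cite{BFHbook}. On each level one solves the resulting SDE system by a fixed-point argument and derives energy estimates by applying It\^o's formula to $E^n$, which produces, after taking expectations and using the growth assumptions \eqref{growth1'}, \eqref{growth2'} together with $\sum g_k^2<\infty$ and a Gronwall argument, bounds of the form $\mathbb{E}\big[\sup_{t\le T}E(t)^n\big]\le c(n,T)$ uniform in the approximation parameters, for every $n$. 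These yield the uniform bounds: $\vr\in L^\infty_\omega L^\infty_t L^\gamma_x$, $\vr\vu\in L^\infty_\omega L^\infty_t L^{2\gamma/(\gamma+1)}_x$, $\vu\in L^2_\omega L^2_t W^{1,2}_x$, plus time-regularity estimates in negative Sobolev norms coming directly from the equations.

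Next I would pass to the limit. The uniform bounds give tightness of the laws of $(\vr,\vr\vu,\vu,W)$ (and of auxiliary quantities needed to identify the pressure, such as $\vr^\gamma$ and the effective viscous flux) on appropriate path spaces: $\vr$ and $\vr\vu$ in $C_{\rm loc}([0,\infty);(L^p_x,w))$-type spaces via the Arzel\`a--Ascoli / Aldous criterion combined with compact embeddings, $\vu$ weakly in $L^2_{\rm loc}W^{1,2}_x$, $W$ in $C_{\rm loc}([0,\infty);\mathfrak U_0)$. By the Jakubowski--Skorokhod theorem (the quasi-Polish version, since weak topologies are involved) I obtain, on a new probability space, a.s.\ converging copies. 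The convergence of the stochastic integral is handled by the standard martingale-identification argument (showing the limit process is a martingale with the right quadratic variation, hence a stochastic integral against a reconstructed Wiener process), and the delicate nonlinear terms — the convective term and, above all, the pressure — are identified via the compactness of the effective viscous flux and the renormalized continuity equation with the Feireisl oscillation-defect estimate, exactly as in the deterministic theory and its stochastic adaptation in \cite{BFHbook}. The renormalized equation (h) and the continuity equation (g) survive the limit by weak lower semicontinuity and the commutator lemma. Finally, one removes the artificial viscosity and artificial pressure in successive limits, and removes the initial-law truncation using the uniform moment bounds to recover $\Lambda=\mathcal L[\vr(0),\vr\vu(0)]$.

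The main obstacle is the energy inequality \eqref{N3}: it must hold not just for $n=1$ but for all $n=0,1,2,\dots$, for \emph{arbitrary} $\mathfrak U\in\mathfrak F_{\tau_1}$, and for a.a.\ $\tau_1$ including $\tau_1=0$. The per-$n$ inequality is obtained on the Galerkin level from It\^o's formula applied to $t\mapsto E(t)^n$, keeping the dissipation term with a favourable sign and controlling the It\^o correction by the noise growth bounds; the issue is that weak convergence only yields the inequality in the limit (with the left side lower semicontinuous, the dissipation term by weak lower semicontinuity of the quadratic form, and the right side by strong convergence of $\vr$ together with the $\mathbb G$-bounds), and one must be careful that the passage to the limit preserves the inequality for the full range of $\tau_1$ and the $\mathfrak F_{\tau_1}$-measurable weight $\mathbf 1_{\mathfrak U}$ — this forces working with the integrated (in $\tau_1$) form and using that $E$ is only weakly continuous in time, so $\tau_1$ must avoid a null set. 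A secondary technical difficulty, specific to the present paper's needs, is that the velocity is recovered only as a random distribution defined for a.a.\ time, so adaptedness must be phrased through the primitive $\vc U$ as in Lemma \ref{LE1}; one checks this passes through the Skorokhod construction. All of the individual ingredients are available in \cite{BH,BFH2,BFHbook}; the statement here is essentially a repackaging of those results with the moment bounds made quantitative and uniform in $n$, so the proof will mostly consist of citing that machinery and indicating the modifications, rather than redoing the compactness analysis from scratch.
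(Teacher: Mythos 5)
Your proposal follows essentially the same route as the paper: the layered approximation scheme of \cite{BFHbook} (Galerkin, artificial viscosity, artificial pressure), It\^o's formula applied to $E^n$ to get moment bounds uniform in $n$ and in the approximation parameters, Jakubowski--Skorokhod compactness on locally-in-time path spaces, and passage to the limit in the energy inequality via lower semicontinuity on the left and strong convergence for a.a.\ times on the right, with adaptedness of $\vu$ handled through the primitive as in Lemma \ref{LE1}. The only detail the paper spells out that you leave implicit is the final upgrade (following the argument of Lemma A.3 in \cite{FlaRom}) from a $\tau_2$-dependent exceptional set of times $\tau_1$ to a single null set valid for all $\tau_2>\tau_1$, obtained by approximating $\tau_2$ through a countable dense set and using lower semicontinuity of the energy in time; this stronger form is what is later needed for the almost sure supermartingale property.
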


\begin{proof}
Theorem \ref{thm:existence} is only a variant of \cite[Thm. 4.0.2.]{BFHbook}.
The proof is based on a four layer approximation scheme where on each layer the stochastic compactness method based on the Jakubowski--Skorokhod representation theorem \cite{jakubow} is used. Since the formulation of the energy inequality \eqref{N3} is slightly different from that in \cite{BFHbook}, we discuss the main points of the proof in the sequel.

We consider a suitable approximation of the diffusion coefficients. It is convenient to introduce $\mathbb F=\big(\vc{F}_k\big)_{k\in\N}$ by
 $$\vc{F}_k(\varrho,\bfu)=\frac{\vc{G}_k(\varrho,\varrho\bfu)}{\varrho}.$$
Note that, in accordance with hypotheses \eqref{growth1'}--\eqref{growth2'}, the functions $\vc{F}_k$
satisfy the following
\[
\vc{F}_k : {\Q} \times [0, \infty) \times R^N \to R^N ,\ \vc{F}_k \in C^1({\Q} \times (0, \infty) \times R^N),
\]
and there exist  constants $(f_k)_{k\in\N}\subset  [0,\infty)$ such that
\begin{equation*}
\| \vc{F}_k (\cdot, \cdot, 0)  \|_{L^\infty_{x,\varrho}} + \| \nabla_{ \vc{u}} \vc{F}_k \|_{L^\infty_{x,\varrho,\bfu}} \leq f_k,\quad\ \sum_{k=1}^\infty f_k^2 < \infty.
\end{equation*}
Finally, we introduce a regularized noise coefficient
 $\mathbb{F}_\varepsilon=\big(\vc{F}_{k,\ep}\big)_{k\in\N}$ by cutting off small values of the density and large values of the velocity.
The basic \emph{approximate problem} then reads as
\begin{align*} 
&\D \vr + \chi(\| \vu \|_{H_m} - R )\Div (\vr [\vu]_{R} ) \dt  = \ep \Del \vr  \dt ,\\
\D \Pi_m [\vr \vu]
&+ \Pi_m [\chi(\| \vu \|_{H_m} - R )\Div (\vr \vu \otimes \vu) ] \dt
+  \Pi_m \big[ \chi(\| \vu \|_{H_m} - R )\Grad p_\delta (\vr)  \big] \dt\nonumber\\
&= \Pi_m \big[ \ep \Del (\vr \vu) +  \Div \mathbb{S}(\Grad \vu) \big] \dt +
\Pi_m \left[ \vr \Pi_m[ \mathbb{F}_\ep(\vr, \vc{u} ) ]  \right]  \dd W,
\end{align*}
where we recognize the artificial viscosity terms $\ep \Del \vr$, $\ep \Del (\vr \vu)$, pressure
regularization $\delta(\vr+ \vr^\Gamma)$ as well as the cut-off operators applied to various quantities using the function
\[
\chi \in C^\infty(\R), \qquad \chi(z) = \left\{ \begin{array}{l} 1 \ \mbox{for} \ z \leq 0, \\ \chi'(z) \leq 0 \ \mbox{for}\ 0 < z < 1, \\
\chi(z) = 0 \ \mbox{for}\ z \geq 1, \end{array} \right.
\]
together with the operators
\[
[\vc{v}]_R = \chi(\| \vc{v} \|_{H_m} - R ) \vc{v}, \ \mbox{defined for}\  \vc{v} \in H_m,\ R\in\N,
\]
where $H_{m}$ is a finite dimensional function space of dimension $m$.
Finally, $\Pi_m$ is a projection operator onto  $H_{m}$. The aim is to pass to the limits $R\to\infty$, $m\to\infty$, $\varepsilon\to 0$ and $\delta\to0$ (in this order) using the stochastic compactness method.

There are now two principal differences to \cite{BFHbook}:
 namely, we are dealing with an infinite time-interval and  the energy inequality in \cite[Thm. 4.0.2.]{BFHbook}
is only included for $n=1$.  The first issue only requires a fine tuning
of the stochastic compactness argument similar to \cite[Sec. 4]{BFHM}: If $X$ is a reflexive separable Banach space and $q\in(1,\infty)$ then topological spaces of the form
\begin{align*}
L^q_{\text{loc}}([0,\infty);X),\quad (L^q_{\text{loc}}([0,\infty);X),w),\quad C_{\text{loc}}([0,\infty);(X,w)),
\end{align*}
belong to the class of the so-called sub-Polish spaces. That is, there exists a countable family of continuous functions that separate points (see \cite[Definition 2.1.3.]{BFHbook}).
Indeed, $L^q_{\text{loc}}([0,\infty);X)$ is a separable metric space with the metric given by
\begin{align*}
(f,g)\mapsto \sum_{M\in\mn}2^{-M}\big(\|f-g\|_{L^q(0,M;X)}\wedge 1\big).
\end{align*}
A set $\mathcal{K}\subset L^q_{\text{loc}}([0,\infty);X)$ is compact provided  the sets
$$\mathcal{K}_M:=\{f|_{[0,M]};\,f\in\mathcal{K}\}\subset L^q(0,M;X)$$
are compact for every $M\in\mn$. On the other hand, the remaining two spaces are  (generally) nonmetrizable locally convex topological vector spaces, generated by the seminorms
$$
f\mapsto \int_0^M \langle f(t), g(t) \rangle_X\dt,\quad M\in\mn,\,g\in L^{q'}(0,\infty; X^*),\,\tfrac{1}{q}+\tfrac{1}{q'}=1,
$$
and
$$
f\mapsto \sup_{t\in[0,M]}\langle f(t), g \rangle_X,\qquad M\in\mn,\,g\in  X^*,
$$
respectively. As above, a set $\mathcal{K}$ is compact provided its restriction to each interval $[0,M]$ is compact in $(L^q(0,M;X),w)$ and  $C([0,M];(X,w))$, respectively. So, in the spaces above there exists a countable family of continuous functions that separate points.
Consequently, the Jakubowski--Skorokhod theorem \cite[Theorem 2]{jakubow} applies.

Let us now discuss the  energy inequality \eqref{N3}. On the basic level (with $R,m,\ve$ and $\delta$ fixed), and in fact even after passing with $R\rightarrow\infty$, we are dealing with finite dimensional function spaces. Hence, the classical version of It\^{o}'s formula applies and we obtain the following energy balance
 arguing similarly to \cite[Prop. 4.1.14.]{BFHbook}
\begin{align}
 &-\frac{1}{n}\int_0^\infty \partial_t \phi  E_{\delta}^n \dt+ \int_0^\infty \phi  E_{\delta}^{ n - 1}\int_{\Q} \left[ \mathbb{S}(\Grad \vu) : \Grad \vu  + \ep  \vr |\Grad \vu|^2 + \ep  P_\delta''(\vr) |\Grad \vr|^2 \right] \dx \dt  \nonumber\\
&\qquad
 = \frac{1}{2}\sum_{k=1}^\8 \int_0^\infty \phi E_{\delta}^{ n - 1} \int_{\Q} \vr |\Pi_m [\vc{F}_{k,\ep} (\vr ,  \vu )] |^2 \dx \dt \nonumber\\
&\qquad+
\sum_{k=1}^\infty\int_0^\infty\phi E_{\delta}^{ n - 1} \int_{\Q}  \vr \Pi_m [ \bfF_{k,\ep} (\vr ,  \vu ) ]  \cdot \vu \dx  \,\dd W_{k},\nonumber\\
&\qquad+ \frac{n-1}{2} \sum_{k=1}^\infty\int_0^\infty \phi  E_{\delta}^{ n - 2 }
 \left( \intQ{    \vr \Pi_m [ \bfF_{k,\ep} (\vr ,  \vu ) ]\cdot \vu } \right)^2  \dt
+\frac{\phi(0)}{n}E^{n}_{\delta}(0).\label{wWS122}
\end{align}
It holds for all $\phi \in C_c^\infty([0,\infty))$ \pas\  with the approximate pressure potential
\[
P_\delta(\vr) = \vr \int_1^\vr \frac{p_\delta(z)}{z^2} \ {\rm d}z=P(\vr)+ \delta \left( \vr \log(\vr) + \frac{1}{\Gamma - 1} \vr^\Gamma \right)
\]
and the total energy
$$
E_{\delta}(t)=\int_{\Q}\left[\frac12\frac{|\vr\bfu(t)|^{2}}{\vr(t)}+P_{\delta}(\vr(t))\right]\dx.
$$
From \eqref{wWS122} one can deduce the moment estimates
\begin{align}
&\expe{  \sup_{\tau \in [0, T]} E_{\delta}^{nr}(\tau)
 }+ \expe{ \left| \int_0^T E_{\delta}^{ n - 1}\int_{\Q} \left[ \mathbb{S}(\Grad \vu) : \Grad \vu  + \ep  \vr |\Grad \vu|^2 + \ep   P_\delta''(\vr) |\Grad \vr|^2 \right] \dx \dt \right|^r }\nonumber\\
  &\qquad\leq c(T)\,
\big(1+\expe{ E_{\delta}^{nr}(0)}) \ \mbox{whenever}\ \ r \geq 2\label{wWS27}
\end{align}
as in \cite[Prop. 4.2.3.]{BFHbook} for all $n\in\N$. The moment bounds from \eqref{wWS27}
can be used to show tightness of the probability laws. Eventually, on uses the Jakubowski--Skorokhod theorem to obtain compactness on a new probability space.
Thanks to \cite[Thm. 2.9.1.]{BFHbook} the energy balance \eqref{wWS122}
continues to hold on the new probability space. The passage to the limit $m\rightarrow\infty$ in \eqref{wWS122} can still be done along the lines of \cite[Lemma 4.3.16.]{BFHbook}. It follows from the passage to the limit in the stochastic integral (see \cite[Prop. 4.3.14.]{BFHbook}) and the arbitrary high moment estimates \eqref{wWS27}.
The subsequent limits $\ve\rightarrow0$
and $\delta\rightarrow0$ follow along the lines of \cite{BFHbook} with the same modifications.  Only the energy inequality \eqref{N3} needs some further explanation (where we follow \cite{FlaRom}, proof of Lemma A.3). So far, we have only shown that for any $\tau>0$ there is a nullset
$\mathfrak T_\tau$ such that\footnote{In \eqref{wWS122}, approximate $\chi_{[r,t]}$ by a sequence of smooth functions $\phi_m$, multiply by $\mathbf 1_{\mathfrak U}$ and apply expectations. In the limit procedures $m\rightarrow\infty$, $\ve\rightarrow0$ and $\delta\rightarrow0$ we use lower semi-continuity on the left-hand-side for any time and on the right-hand side strong convergence for a.a. time.}
\begin{align}
\label{N3b}
\E\big[\mathbf 1_{\mathfrak U}\mathfrak S^n[\varrho,\bfu]_{\tau} \big]\leq\E\big[\mathbf 1_{\mathfrak U}\mathfrak S^n[\varrho,\bfu]_{r} \big]
\end{align}
for all $r\notin\mathfrak T_\tau$ and all $\mathfrak U\in\mf_{r}$, where
\[
\begin{split}
\mathfrak S^n[\vr,\bfu]_\tau= & \frac{1}{n} {E}_\tau^n + \int_0^\tau \left( {E}^{n-1}_t \intQ{ \mathbb{S}(\Grad \bfu)
: \Grad \bfu } \right) \dt\\& - \frac{1}{2} \int_0^\tau \left( {E}^{n-1}_t \sum_{k=1}^\infty \intQ{ \frac{ |\vc{G}_k(\vr,\vr\bfu)|^2}{\vr} }
\right) \dt \\
&- \frac{n-1}{2} \int_0^\tau \left( {E}^{n-2}_t \sum_{k=1}^\infty \left( \intQ{ \vc{G}_k(\vr,\vr\bfu)\cdot \bfu } \right)^2
\right) \dt ,\\
E_t^n&=\int_{\mt}\big[\varrho(t)|\bfu(t)|^2+P(\varrho(t))\big]\dx.
\end{split}
\]
Now we set $\mathfrak T=\bigcup_{t\in\mathfrak D}$ where $\mathfrak D\subset [0,\infty)$ is countable and dense. We claim that
\eqref{N3b} holds for all $r\not\in\mathfrak T$ and all $\tau>r$ which gives \eqref{N3}. In fact, for $0<r<\tau$ with $r\not\in\mathfrak T$ there is a sequence
$(\tau_m)\subset\mathfrak D$ with $\tau_m\rightarrow\tau$. Now, passing with $m\rightarrow\infty$ in \eqref{N3b} and using lower semi-continuity of the mapping
$$t\mapsto \intQ{ \left[ \frac{1}{2}  \frac{|\vr\vu(t)|^2}{\vr(t)} + P(\vr(t)) \right] }$$
yields \eqref{N3}.
\end{proof}
\begin{Remark}\label{rem:moments}
\begin{itemize}
\item[(a)] It can be seen from the proof of Theorem \ref{thm:existence} that is possible to show a much stronger version of the energy inequality which reads as
\begin{align*} \nonumber
- &\frac{1}{n} \int_0^\infty \partial_t \phi E^n \dt -\frac{1}{n} \phi(0) E^{n}(0)
+ \int_0^\infty \phi  E^{ n - 1}
\intQ{ \mathbb{S}(\Grad \vu) : \Grad \vu }  \dt \nonumber
\\
& \leq \frac{1}{2} \int_0^\infty \phi  E^{ n - 1}
\sum_{k=1}^\infty \intQ{ \vr^{-1} |\vc{G}_k(\vr,\vr\bfu)|^2 }  \dt \\
&+ \frac{n-1}{2} \int_0^\infty \phi  E^{ n - 2 }
\sum_{k=1}^\infty \left( \intQ{  \vc{G}_k (\vr,\vr\bfu)\cdot \vu } \right)^2  \dt\nonumber
\\
&+ \sum_{k = 1}^\infty\int_0^\infty \phi  E^{ n - 1}
 \intQ{ \vc{G}_k(\vr,\vr\bfu) \cdot \vu }\, {\rm d}W_k
\nonumber
\end{align*}
for all $\phi\in C^\infty_c([0, \infty))$, $\phi \geq 0$ and all $n=1,2,\dots$ \pas \ The reason why we decided for \eqref{N3} is that otherwise we are unable to show the equivalence of Definition \ref{def:sol} and Definition \ref{D} (see Proposition \ref{prop:def=D}).
\item[(b)] The energy inequality \eqref{N3} and Proposition \ref{cor:B3FR} imply
\begin{align*}
\p\bigg(\sup_{0\leq \tau\leq T}\bigg[\int_{\mt}\frac{|\vr\bfu(\tau)|^{2}}{\vr(\tau)}+P(\varrho(\tau))\big]\dx\bigg]^n+\bigg[\int_0^T\int_{\mt}|\nabla\bfu|^2\dxt\bigg]^n<\infty\bigg)=1
\end{align*}
for all $n\in\N$ and all $T>0$ provided we have
\begin{align*}
\E\bigg[\int_{\mt}\bigg[\frac{|\vr\bfu(0)|^2}{\varrho(0)}+P(\varrho(0))\bigg]\dx\bigg]^n<\infty.
\end{align*}
\end{itemize}
\end{Remark}

\begin{Remark} \label{R1}

In view of the Skorokhod representation theorem, we may always assume that $(\mathcal{O},\mathfrak{F},\mathcal{P})$ is the standard probability space with
$\mathcal{P}$ being the Lebesgue measure on $[0,1]$.

\end{Remark}

\subsection{Martingale solutions as measures on the space of trajectories}

As it can be seen in the proof of Theorem \ref{thm:existence}, the natural filtration associated to a dissipative martingale solution in the sense of Definition \ref{def:sol} is the joint canonical filtration of $[\vr,\vu,W]$. Note that since we cannot exclude vacuum regions where the density vanishes, this filtration differs from the filtration generated by $[\vr,\vr\vu,W]$. In other words the velocity $\bfu$ is not a measurable function of the density and momentum $[\vr,\vr\vu]$. However, as already mentioned above, the velocity is a class of equivalence with respect to all the variables $\omega,t,x$ and is therefore not a stochastic process in the classical sense. Consequently, it is not clear at first sight, how Markovianity for the system \eqref{E1}--\eqref{E3} shall be formulated.

In order to overcome this issue, we introduce a new variable $\bfU$ which corresponds to the time integral $\int_{0}^{\cdot}\bfu\,\dd s$ and we study the Markov selection for the joint law of $[\vr,\vr\vu,\bfU]$. This stochastic process has continuous trajectories and contains all the necessary information. In particular, the velocity $\bfu$ is a measurable function of $\bfU$. However, as the initial condition for $\bfU$ is changing through the proof of the Markov selection (more precisely, we have $\bfU_{t}=\bfU_{0}+\int_{0}^{t}\vu\,\dd s$), the mapping $\bfU\mapsto \vu$ is not injective.

For future analysis, it is more convenient to consider  martingale solutions as probability measures
$U\in {\rm Prob}[\OTN]$,
where
\[
\OTN = C_{\rm loc}([0, \infty);W^{-k,2}(\mt,R^{2N+1})),
\]
where  $k>\tfrac{N}{2}$. This refers is $X=W^{-k,2}(\mt,R^{2N+1}))$ in the set-up of Section \ref{subsec:markov}.
To this end, let $\mathfrak{B}$ denote the Borel $\sigma$-field on $\Omega$.
Let $\bfxi=(\xi^{1},\bfxi^{2},\bfxi^{3})$ denote the canonical process of  projections, that is,
$$
\bfxi=(\xi^{1},\bfxi^{2},\bfxi^{3}):\Omega\to\Omega,\qquad \bfxi_{t}=(\xi^{1}_{t},\bfxi^{2}_{t},\bfxi^{3}_{t})(\omega)=\omega_{t}\in W^{-k,2}(\mt,R^{2N+1}) \ \mbox{for any}\ t \geq 0,
$$
and let $(\mathfrak{B}_{t})_{t\geq 0}$ denote the associated canonical filtration given by
$$
\mathfrak{B}_{t}:=\sigma(\bfxi|_{[0,t]}),\qquad t\geq 0,
$$
which coincides with the Borel $\sigma$-field on $\Omega^{[0,t]}=C([0,t];W^{-k,2}(\mt,R^{2N+1}))$.

To a dissipative martingale solution $\big((\mathcal O,\mf,(\mf_t)_{t\geq0},\prst),\varrho,\bfu,W)$ in the sense of Definition~\ref{def:sol} we may associate its probability law
\[
U = \mathcal{L}\left[\vr, \vc{q} = \vr \bfu,\bfU=\int_0^\cdot\bfu\,\dd s\right]\in {\rm Prob}[\OTN].
\]
We obtain a probability space $\big( \OTN, \BTN, U \big)$.
Finally, we introduce the space
\begin{align*}
H &= \left\{ [\vr, \vc{q},\bfU] \in\tilde H  \ \Big|  \intQ{  \frac{ |\vc{q}|^2}{|\vr|}
 } <\infty  \right\},\\
\tilde H &= L^\gamma (\mt) \times L^{\frac{2 \gamma}{\gamma + 1}}(\mt, R^N)\times W^{1,2}(\mt,R^N).
\end{align*}
We tacitly include points of the form $(0,\vc{0},\bfU)$ with $\bfU\in W^{1,2}(\mt;R^N)$ in $H$. Hence
it is a Polish space together with the metric
\begin{align}\label{eq:dy}
d_H(y,z)&=d_Y((y^1,\bfy^2,\bfy^3),(z^1,\bfz^2,\bfz^3))=\|y-z\|_X+\Big\|\frac{\bfy^2}{\sqrt{|y^1|}}-\frac{\bfz^2}{\sqrt{|z^1|}}\Big\|_{L^2_x}.
\end{align}

Moreover, it is easy to see that the inclusion $H\hookrightarrow X$ is dense.
We also define the subset
\[
Y = \left\{ [\vr, \vc{q},\bfU] \in X \ \Big| \vr\not\equiv 0,\ \vr \geq 0, \ \intQ{  \frac{ |\vc{q}|^2}{\vr}
 } <\infty  \right\}.
\]
Note that $(Y,d_H)$ is not complete (because of $\vr\not\equiv 0$)
and the inclusion $Y\hookrightarrow X$ is not dense (because of $\vr\geq0$).

The law $U(t, \cdot)$ will be supported on $Y$ which consequently also determines the set of admissible initial conditions. This is a consequence of the energy inequality (recall Remark~\ref{rem:moments} (b)) and the continuity equation (which excludes trivial density states by the balance of mass). The following is a rigorous definition.

\begin{Definition}
\label{D}
A Borel probability measure $U$ on $\OTN$ is called a solution to the martingale problem associated to
 \eqref{E1}--\eqref{E3} provided
\begin{enumerate}[(a)]
\item it holds
\begin{align*}
U& \left( \xi^1 \in C_{\rm loc}\big([0, \infty); \big(L^{\gamma}(\mt),w\big)\big),\ \xi^1\geq0 \right)  = 1,\\
U &\left( \bfxi^2 \in C_{\rm loc}\big([0, \infty); \big(L^{\frac{2\gamma}{\gamma+1}}(\mt,R^N),w\big)\big) \right)  = 1,\\
U &\left( \bfxi^3 \in W^{1,2}_{\rm loc}\big([0, \infty); W^{1,2}(\mt,R^N)\big) \right)  = 1;
\end{align*}
\item it holds
$
\bfxi^2 = \xi^1 \partial_t\bfxi^3
$  $U$-a.s.;
\item the total energy
\[
\mathcal E_{t} = \int_{\mt}\left[ \frac{1}{2} \frac{|\bfxi_{t}^2|^2}{\xi_{t}^1} +P(\xi_{t}^1) \right]\dx
\]
belongs to the space
$
L^\infty_{\rm{loc}}(0,\infty)
$
$U\mbox{-a.s.}$;
\item
it holds $U$-a.s.
\[
\left[ \intQ{ \xi_{t}^1 \psi } \right]_{t = 0}^{t = \tau} -
\int_0^\tau \intQ{ \bfxi_{t}^2 \cdot \Grad \psi  } \dt = 0
\]
for any $\psi \in C^1(\mt)$ and  $\tau \geq 0$;
\item  if $b\in C^1(\R)$ such that there exists $M_{b}>0$ with $b'(z)=0$ for all $z\geq M_b$, then
 there holds
$U$-a.s.
\[
\begin{split}
&\left[ \intQ{ b(\xi_{t}^1) \psi } \right]_{t = 0}^{t = \tau} \\
&- \int_0^\tau \intQ{ \left[ b(\xi_{t}^1) \ups_{t} \cdot \Grad \psi + \left( b(\xi_{t}^1) - b'(\xi_{t}^1) \xi_{t}^1 \right)
\Div \ups_{t}\, \psi \right] } \dt = 0
\end{split}
\]
for any  $\psi\in C^1(\torN)$ and  $\tau \geq 0$.
\item for any $\bfphi \in C^1(\mt, R^N)$,
the stochastic process
\[
\begin{split}
&\mathscr M(\bfphi): [\omega, \tau] \mapsto \left[ \intQ{ \bfxi_{t}^2 \cdot \bfphi } \right]_{t = 0}^{t = \tau} -
\int_0^\tau \intQ{ \left[ \frac{\bfxi_{t}^2 \otimes \bfxi_{t}^2}{\xi_{t}^1} : \Grad \bfphi
+ p(\xi_{t}^1) \Div \bfphi \right] } \dt\\
 &\qquad\qquad\qquad\qquad+\int_0^\tau \intQ{ \mathbb{S}(\Grad \ups_{t}) : \Grad \bfphi }\dt
\end{split}
\]
is a square integrable $((\mathfrak{B}_{t})_{t\geq0},U)$-martingale with quadratic variation
\[
\frac{1}{2} \int_0^\tau \sum_{k=1}^\infty \left( \intQ{ \vc{G}_k(\xi_{t}^1,\bfxi_{t}^2) \cdot \bfphi } \right)^2\dt;
\]
\item for any $n=1,2,\dots$
the stochastic process
\[
\begin{split}
\mathscr S^n:[\omega, \tau] &\mapsto \frac{1}{n} \mathcal{E}_\tau^n + \int_0^\tau \left( \mathcal{E}^{n-1}_t \intQ{ \mathbb{S}(\Grad \ups_{t})
: \Grad \ups_{t} } \right) \dt\\& - \frac{1}{2} \int_0^\tau \left( \mathcal{E}^{n-1}_t \sum_{k=1}^\infty \intQ{ \frac{ |\vc{G}_k(\xi_{t}^1,\bfxi_{t}^2)|^2}{\xi_{t}^1} }
\right) \dt \\
&- \frac{n-1}{2} \int_0^\tau \left( \mathcal{E}^{n-2}_t \sum_{k=1}^\infty \left( \intQ{ \vc{G}_k(\xi_{t}^1,\bfxi_{t}^2)\cdot \ups_{t} } \right)^2
\right) \dt
\end{split}
\]
is an almost sure $((\mathfrak{B}_{t})_{t\geq0},U)$-supermartingale (in the sense of Definition \ref{def:3.2}) and $s=0$ is a regular time.
\end{enumerate}
\end{Definition}

The relation between Definition \ref{def:sol} and Definition \ref{D} is given by the following result.

\begin{Proposition}\label{prop:def=D}
The following statements hold true:
\begin{enumerate}
\item Let $((\mathcal O,\mf,(\mf_t)_{t\geq0},\prst),\varrho,\bfu,W)$ be a dissipative martingale solution to \eqref{E1}--\eqref{E3} in the sense of Definition \ref{def:sol}. Then for every
$\mathfrak F_0$-measurable random variable $\bfU_0$ with values in $W^{1,2}(\mt,R^N)$ we have that
\begin{equation}\label{eq:4.9}
U =\mathcal{L} \left[ {\vr, \vc{q} = \vr \vu,\bfU=\bfU_{0}+\int_0^\cdot\bfu\,\dd s} \right] \in {\rm Prob}[\OTN]
\end{equation}is a solution to the martingale problem associated to \eqref{E1}--\eqref{E3} in the sense of Definition~\ref{D}.
\item Let $U$ be a solution to the martingale problem associated to \eqref{E1}--\eqref{E3} in the sense of Definition \ref{D}. Then there exists $((\mathcal O,\mf,(\mf_t)_{t\geq0},\prst),\varrho,\bfu,W)$ which is a dissipative martingale solution to \eqref{E1}--\eqref{E3} in the sense of Definition \ref{def:sol} and an $\mathfrak F_0$-measurable random variable $\bfU_0$ with values in $W^{1,2}(\mt,R^N)$ such that
\begin{equation}\label{eq:4.9a}
U =\mathcal{L} \left[ {\vr, \vc{q} = \vr \vu,\bfU=\bfU_{0}+\int_0^\cdot\bfu\,\dd s} \right] \in {\rm Prob}[\OTN].
\end{equation}
\end{enumerate}
\end{Proposition}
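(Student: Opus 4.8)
The plan is to prove the two directions more or less independently, with the first one being essentially a bookkeeping exercise and the second requiring a genuine (but by now standard) reconstruction of a stochastic basis from a law on trajectory space.

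\emph{Proof of (1).} Given a dissipative martingale solution $((\mathcal O,\mf,(\mf_t)_{t\geq0},\prst),\varrho,\bfu,W)$ and an $\mf_0$-measurable $\bfU_0$, I would first invoke Lemma \ref{LE1} to see that $\bfU_t=\bfU_0+\int_0^t\bfu\,\dd s$ is $(\mf_t)$-adapted with trajectories a.s.\ in $W^{1,2}_{\rm loc}(0,\infty;W^{1,2}(\mt,R^N))$. Hence the triple $[\vr,\vr\bfu,\bfU]$ is a well-defined random variable with values in $\OTN=C_{\rm loc}([0,\infty);W^{-k,2}(\mt,R^{2N+1}))$ (the weak-in-space continuity of $\vr$ and $\vr\bfu$ from Definition \ref{def:sol}(c),(d) upgrades to strong continuity in the larger space $W^{-k,2}$), so $U=\mathcal L[\vr,\vr\bfu,\bfU]$ is a Borel probability measure on $\OTN$. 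Now I would verify conditions (a)--(g) of Definition \ref{D} one by one. Items (a)--(e) are immediate translations of Definition \ref{def:sol}(c)--(h) under the identification $\xi^1=\vr$, $\bfxi^2=\vr\bfu$, $\bfxi^3=\bfU$, $\ups=\partial_t\bfxi^3=\bfu$ (note $\bfxi^2=\xi^1\ups$ holds because $\bfU$ differs from $\int_0^\cdot\bfu\,\dd s$ only by the time-independent $\bfU_0$, so $\partial_t\bfxi^3=\bfu$ a.e.); Remark \ref{ER2} lets me restrict the test functions to a countable family, which is what is needed for the identities to be Borel measurable in $\omega$. Item (f), the martingale property of $\mathscr M(\bfphi)$ with the stated quadratic variation, follows from the momentum equation \eqref{N2} by the standard argument identifying the canonical martingale on trajectory space with the stochastic integral $\sum_k\int_0^\cdot(\int_\mt\vc G_k(\vr,\vr\bfu)\cdot\bfphi)\,\dd W_k$ (one checks that for $s\le t$ and bounded $\mathfrak B_s$-measurable continuous $g$, $\E^U[(\mathscr M(\bfphi)_t-\mathscr M(\bfphi)_s)g]=0$ and similarly for the squared increment minus the quadratic-variation increment, both of which transfer directly from the corresponding statements on $(\mathcal O,\mf,\prst)$). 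Item (g), the almost sure supermartingale property of $\mathscr S^n$ with $s=0$ a regular time, is exactly the content of the energy inequality \eqref{N3} with $\mathfrak U=\Omega$ and with general $\mathfrak U\in\mf_{\tau_1}$ giving the conditional statement; here I would use Proposition \ref{prop:B2} / Definition \ref{def:3.2} to phrase \eqref{N3} as the required supermartingale inequality, noting that $\mathscr S^n=\alpha-\beta$ with $\beta$ the nondecreasing dissipation integral and $\alpha$ left lower semicontinuous by weak lower semicontinuity of the energy.

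\emph{Proof of (2).} Conversely, given $U$ solving the martingale problem, I would reconstruct a stochastic basis. Put $\vr=\xi^1$, $\vr\bfu=\bfxi^2$, and \emph{define} the velocity by $\bfu:=\partial_t\bfxi^3$, which is a well-defined element of $L^2_{\rm loc}(0,\infty;W^{1,2}(\mt,R^N))$ $U$-a.s.\ by (a), and $\bfU_0:=\bfxi^3_0$. Condition (b) guarantees $\vr\bfu=\vr\bfu$ in the sense that $\bfxi^2=\xi^1\bfu$, so these are mutually consistent. The only missing object is the Wiener process $W$. To produce it I would use condition (f): the family $\{\mathscr M(\bfphi)\}$ of square-integrable martingales with the prescribed (deterministic-form) quadratic variations is, by the standard martingale representation theorem for cylindrical Wiener processes (as in \cite{daprato} or \cite[Sec.~2]{BFHbook}), generated by a cylindrical $(\mathfrak B_t)$-Wiener process $W$ on a possibly enlarged stochastic basis, in such a way that $\mathscr M(\bfphi)_\tau=\sum_k\int_0^\tau(\int_\mt\vc G_k(\vr,\vr\bfu)\cdot\bfphi)\,\dd W_k$; unwinding the definition of $\mathscr M(\bfphi)$ then gives precisely the momentum equation \eqref{N2}. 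The continuity equation (d) and its renormalized version (e) are restatements of Definition \ref{def:sol}(g),(h). The energy inequality \eqref{N3} is recovered from the almost sure supermartingale property of $\mathscr S^n$ in (g) via Definition \ref{def:3.2}, with the case $n=0$ and the conditioning on $\mathfrak U\in\mf_{\tau_1}$ handled as before, and $\tau_1=0$ admissible because $s=0$ is assumed regular. Finally one should complete the filtration and make it right-continuous, and appeal to Remark \ref{R1} to realize everything on the standard probability space if desired. Since the enlargement of the probability space in the martingale representation step does not change the law of $[\vr,\vr\bfu,\bfU]$, \eqref{eq:4.9a} holds.

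\emph{Main obstacle.} The routine-looking but genuinely delicate point is in direction (2): extracting an \emph{honest} cylindrical Wiener process $W$ (together with the right stochastic basis) out of the abstract martingale data of condition (f), and making sure the stochastic-integral structure $\mathbb G(\vr,\vr\bfu)\dW$ is correctly recovered rather than merely the right-hand side of \eqref{N2} as an unstructured process --- this is exactly the difficulty the introduction flags as the whole reason for carrying the auxiliary variable $\bfxi^3$. The key is that the quadratic variation in (f) is given as an explicit functional of $(\xi^1,\bfxi^2)$ for \emph{every} test function $\bfphi$ simultaneously, which is enough to pin down the Gaussian structure; I would set up the cross-variations $\langle\mathscr M(\bfphi),\mathscr M(\bfpsi)\rangle$ and invoke the representation theorem in the form used in \cite{BFHbook}. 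A secondary, more bookkeeping-type subtlety is the careful matching of the ``almost sure'' / ``regular time'' language of Definition \ref{def:3.2} with the ``a.a.\ $\tau_1$, including $\tau_1=0$'' phrasing of \eqref{N3}, where I would lean on Propositions \ref{prop:B2} and \ref{cor:B3FR}.
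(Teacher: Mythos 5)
Your proposal is correct and follows essentially the same route as the paper: part (1) by transferring the increment identities (martingale, quadratic variation, and energy/supermartingale at regular times) through the pushforward under $[\vr,\vr\bfu,\bfU]$ with test functions measurable with respect to $\mathfrak B_s$, and part (2) by setting $\vr=\xi^1$, $\bfu=\partial_t\bfxi^3$, $\bfU_0=\bfxi^3_0$ and invoking the Da Prato--Zabczyk martingale representation theorem on an enlarged stochastic basis to produce $W$ and recover \eqref{N2}, noting that the enlargement leaves the law of $[\vr,\vr\bfu,\bfU]$ unchanged. The points you flag as delicate (recovering the stochastic-integral structure from (f), and matching the regular-time phrasing of \eqref{N3} with Definition \ref{def:3.2}) are exactly the ones the paper handles, in the same way.
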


\begin{proof}
{\em Part 1.:} Let $\big((\mathcal O,\mf,(\mf_t)_{t\geq0},\prst),\varrho,\bfu,W)$ be a dissipative martingale solution to \eqref{E1}--\eqref{E3} in the sense of Definition \ref{def:sol} and let $\bfU_0$ be an arbitrary
 $\mathfrak F_0$-measurable random variable with values in $W^{1,2}(\mt,R^N)$.
We shall show that the probability law given by \eqref{eq:4.9} is a solution to the martingale problem associated to \eqref{E1}--\eqref{E3} in the sense of Definition \ref{D}.

The point (a) in Definition \ref{D} follows from (c), (d), (e) in Definition \ref{def:sol}, Lemma \ref{LE1} and the definition of $U$ as the pushforward measure generated by $[\vr,\bfq,\bfU]$. Similarly, we obtain that
$$
1=\mathcal{P}(\bfq=\vr\vu)=\mathcal{P}(\bfq=\vr\partial_{t}\bfU)=U(\bfxi^{2}=\xi^{1}\partial_{t}\bfxi^{3}),
$$
so (b) in Definition \ref{D} follows.
Since the total energy as well as the left hand side of the continuity equation and the renormalized equation are measurable functions on the subset of $\Omega$ where the law $U$ is supported, we deduce that the points  (c), (d), (e) in Definition \ref{D} hold.

Next, we recall that by definition of the filtration $(\mathfrak{B}_{t})_{t\geq0}$, the canonical process $\bfxi=(\xi^{1},\bfxi^{2},\bfxi^{3})$ is $(\mathfrak{B}_{t})$-adapted. Hence by Lemma \ref{LE1}, $\partial_{t}\bfxi^{3}$ is a $(\mathfrak{B}_{t})$-adapted random distribution taking values in $L^{2}_{\rm{loc}}(0,\infty;W^{1,2}(\mt,R^{N}))$.

In order to show (f) and (g) we observe that all the  expressions appearing in the definition of $\mathscr{M}(\bfphi)$ and $\mathscr{S}(\bfphi)$ are also measurable functions on the subset of $\Omega$ where $U$ is supported. Moreover, from Lemma \ref{LE1} we see that the left hand side of \eqref{N2} is a martingale with respect to the canonical filtration generated by $[\vr,\bfq,\bfU]$. This directly implies the desired martingale property of $\mathscr{M}(\bfphi)$ as follows. We consider increments $X_{t,s}=X_t-X_s$, $s\leq t$, of stochastic processes. Then we obtain for $\bfphi\in C^\infty(\mt,R^N)$
and a continuous function $h:\Omega^{[0,s]}\rightarrow[0,1]$ that
\begin{align*}
\E^U \big[h(\bfxi|_{[0,s]})\mathscr M (\bfphi)_{s,t}\big]&=
\E^\p\big[h([\varrho,\bfq,\bfU]|_{[0,s]})\mathfrak M(\bfphi)_{s,t}\big]=0,
\end{align*}
where
\begin{equation*}
\begin{split}
\mathfrak M(\bfphi)_t&=\int_{\mt}\varrho\bfu(t)\cdot\bfvarphi\dx-\int_{\mt}\varrho\bfu(0)\cdot\bfvarphi\dx-\int_0^t\int_{\mt}\varrho\bfu\otimes\bfu:\nabla\bfvarphi\dx\,\dif r\\&-\int_0^t\int_{\mt}\mathbb S(\nabla_x\bfu):\nabla_x\bfvarphi\dx\,\dif r+a\int_0^t\int_{\mt}\varrho^\gamma\cdot\diver_x\bfphi\dx\,\dif r.
\end{split}
\end{equation*}
Similarly, we obtain
\begin{align*}
&\E^U\bigg[h(\bfxi|_{[0,s]})\Big([\mathscr M (\bfphi)^2]_{s,t}-\mathscr{N}(\bfphi)_{s,t}\Big)\bigg]=\E^\p\bigg[h([\varrho,\bfq,\bfU]|_{[0,s]})\Big([\mathfrak M (\bfphi)^2]_{s,t}-\mathfrak N(\bfphi)_{s,t}\Big)\bigg]=0,
\end{align*}
where
\begin{align*}
\mathfrak N(\bfphi)_t&=\int_0^t\sum_{k=1}^\infty\left(\int_{\mt} \bfG_k(\varrho,\varrho\bfu)\cdot\bfvarphi\dx\right)^2\,\dif r,\\
\mathscr{N}(\bfphi)_t&= \int_0^t \sum_{k=1}^\infty \left( \intQ{ \vc{G}_k(\xi^1_t,\bfxi^2_t) \cdot \bfphi } \right)^2\,\dd r.
\end{align*}
As a consequence we deduce that $\mathscr M (\bfphi)$ is a $(\mathfrak{B}_{t})$-martingale with quadratic variation $\mathscr{N}(\bfphi)$.

The proof of (g) is similar to (f). In fact, there holds for any regular time $s$ and any $t\geq s$ that
\begin{align*}
\E^U \big[h(\bfxi|_{[0,s]})\mathscr S^n_{t}\big]&=
\E^\p \big[h([\vr,\bfq,\bfU]|_{[0,s]})\mathfrak S^n_{t}\big]\leq
\E^\p \big[h([\vr,\bfq,\bfU]|_{[0,s]})\mathfrak S^n_{s}\big]=\E^U \big[h(\bfxi|_{[0,s]})\mathscr S^n_{s}\big]
\end{align*}
using \eqref{N3},
where
\begin{equation*}
\begin{split}
\mathfrak S^n[\varrho,\bfu]_t&=\frac{1}{n} E_t^n + \int_0^t \left( E^{n-1}_r \intQ{ \mathbb{S}(\Grad \vu)
: \Grad \vu } \right) \,\dif r\\& - \frac{1}{2} \int_0^t \left( E^{n-1}_r \sum_{k=1}^\infty \intO{ \vr(r)^{-1} |\vc{G}_k(\vr(r),\varrho\bfu(r))|^2 }
\right) \,\dif r \\
&- \frac{n-1}{2} \int_0^\tau \left( E^{n-2}_t \sum_{k=1}^\infty \left( \intO{ \vc{G}_k(\vr(r),\varrho\bfu(r))\cdot \vc{u}(r) } \right)^2
\right) \,\dif r,\\
E_t^n&=\int_{\mt}\big[\varrho(t)|\bfu(t)|^2+P(\varrho(t))\big]\dx.
\end{split}
\end{equation*}
We have shown that $\mathscr S^n_{t}$ is an a.s. supermartingale, i.e. (g) holds.
This finishes the first part of the proof.

{\em Part 2.:} Let $U\in{\rm Prob}[\OTN]$ be a solution to the martingale problem in the sense of Definition \ref{D}.
We have to find a stochastic basis $(\mathcal O,\mf,(\mf_t)_{t\geq0},\prst)$, density $\vr$, velocity $\bfu$ and
a cylindrical
 $(\mf_t)$-Wiener process $W$ such that $((\mathcal O,\mf,(\mf_t)_{t\geq0},\prst),\vr,\bfu,W)$ is a dissipative martingale solution in the sense of Definition \ref{def:sol}.

In view of (f) in Definition \ref{D} together with the standard martingale representation theorem (see \cite{daprato}, Thm. 8.2) we infer the existence of an extended stochastic basis
\begin{equation*}
(\Omega\times\tilde\Omega,\mathfrak B\otimes{\tilde{\mathfrak B}},(\mathfrak B_{t}\otimes{\tilde{\mathfrak B}}_{t})_{t\geq0} ,U\otimes\tilde U),
\end{equation*}
and a cylindrical Wiener process
$W=\sum_{k=1}^\infty W_k e_k$ adapted to $(\mathfrak B_{t}\otimes{\tilde{\mathfrak B}}_{t})_{t\geq0}$, such that
$$\mathscr M(\bfphi)= \sum_{k=1}^\infty\int_0^t  \left( \intQ{ \vc{G}_k(\vr,\vr\vu) \cdot \bfphi } \right)\,\dd W_k,$$
where
\[
\vr(\omega, \widetilde{\omega}) := \xi^1(\omega), \quad \vu(\omega, \widetilde{\omega}) := \partial_{t}\bfxi^3(\omega),\quad\bfU(\omega,\tilde\omega):=\bfxi^{3}(\omega).
\]
Choosing for $(\mathcal O,\mf,(\mf_t)_{t\geq0},\prst)$ the above extended probability space with the corresponding augmented filtration, then
$
 \big((\mathcal O,\mf,(\mf_t)_{t\geq0},\prst),\vr,\vu,W\big)
$
is a dissipative martingale solution solution to (\ref{E1})--(\ref{E3})
in the sense of Definition \ref{def:sol}. Furthermore, it holds
\begin{equation*}
U =\mathcal{L} \left[ {\vr, \vc{q} = \vr \vu,\bfU=\bfU_{0}+\int_0^\cdot\bfu\,\dd s} \right] \in {\rm Prob}[\OTN],
\end{equation*}
where by definition $\bfU_{0}(\omega,\tilde\omega)=\bfxi^{3}_{0}(\omega).$
\end{proof}

In other words, to every dissipative martingale solution we may associate infinitely many solutions to the martingale problem, which are parametrized by the initial condition $\bfU_{0}$, but whose marginals corresponding to $(\xi^{1},\bfxi^{2})$ coincide. On the other hand, as it can be seen from the proof of {\em Part 2.} of the proof of Proposition \ref{prop:def=D} that having a solution to the martingale problem already determines the initial condition $\bfU_{0}$ used in \eqref{eq:4.9a}.

\section{Main result \& proof}
\label{s:main}

In this section we present our main result which is the existence of an almost sure Markov selection to the compressible Navier--Stokes \eqref{E1}--\eqref{E3}. In the following, if $y\in Y$ is an admissible initial condition, we denote by $U_{y}$ a solution to the martingale problem associated to \eqref{E1}--\eqref{E3} starting from $y$ at time $t=0$. That is, the marginal of $U_{{y}}$ at $t=0$ is $\delta_{y}$.

\begin{Theorem}\label{thm:main}
Let
\[
p(\vr) = a \vr^\gamma, \ a > 0, \ \gamma > \frac{N}{2}.
\]
 Let the diffusion coefficients $\mathbb{G} = (\mathbf{G}_{k})_{k\in\N}$ be continuously differentiable
satisfying \eqref{growth1'} and \eqref{growth2'}. Then there exists a family $\{U_{y}\}_{y\in Y}$ of solutions to the martingale problem associated to \eqref{E1}--\eqref{E2} in the sense of Definition \ref{D} with the  Markov property
(as defined in Definition \ref{def:2.4}).
\end{Theorem}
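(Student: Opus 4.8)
The plan is to verify the hypotheses of the abstract Markov selection theorem, Theorem~\ref{thm:2.8}, for the family of martingale solutions constructed in Section~\ref{sec:NS}. Concretely, one sets $X = W^{-k,2}(\mt,R^{2N+1})$ with $k > N/2$, takes $H$ and $Y$ as defined just before Definition~\ref{D}, and for each $y \in Y$ lets $\mathcal C(y) \subset \mathrm{Prob}_Y[\OTN]$ be the set of all solutions $U_y$ to the martingale problem in the sense of Definition~\ref{D} whose marginal at $t=0$ is $\delta_y$ \emph{and} which satisfy the moment bounds from Remark~\ref{rem:moments}(b) uniformly in a suitable sense. One must check: (i) non-emptiness and convexity of $\mathcal C(y)$; (ii) compactness of $\mathcal C(y)$ in $\mathrm{Prob}[\OTN]$; (iii) measurability of $y \mapsto \mathcal C(y)$; (iv) the disintegration property; and (v) the reconstruction property. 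Then Theorem~\ref{thm:2.8} directly yields the selection $\{U_y\}_{y\in Y}$ with the almost sure Markov property, which combined with Proposition~\ref{prop:def=D} gives the statement.

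For non-emptiness I would invoke Theorem~\ref{thm:existence} with $\Lambda = \delta_{(y^1,\bfy^2)}$ and an arbitrary choice of $\bfU_0$ (say $\bfU_0 = \bfy^3$), using Proposition~\ref{prop:def=D}(1) to pass to the law on $\OTN$; the moment hypotheses of Theorem~\ref{thm:existence} hold because $y \in Y$ means the initial energy is finite and deterministic. Convexity is immediate since all defining conditions in Definition~\ref{D} are either pathwise constraints (conditions (a)--(e)), linear-in-$U$ martingale identities (condition (f), via $\E^U[h \,\mathscr M(\bfphi)_{s,t}] = 0$ and the analogous quadratic-variation identity), or the almost sure supermartingale inequality (condition (g)), which is linear in $U$; a convex combination of two such measures still satisfies them. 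Compactness is the most technical of the ``easy'' points: tightness follows from the uniform moment bounds of Remark~\ref{rem:moments}(b) together with the continuity equation and momentum equation giving time-equicontinuity in the weak topologies, using Aubin--Lions-type arguments and the sub-Polish compactness criteria recalled in the proof of Theorem~\ref{thm:existence}; closedness of $\mathcal C(y)$ under weak convergence requires passing to the limit in the martingale and supermartingale conditions, where one uses lower semicontinuity for the supermartingale (as in Proposition~\ref{prop:B2} and the proof of Theorem~\ref{thm:existence}) and the quadratic-variation characterization (Proposition~\ref{prop:B1}) to preserve the stochastic-integral structure. Measurability of $y\mapsto\mathcal C(y)$ follows by the usual argument: the conditions are expressed through countably many measurable functionals of $U$ (Remark~\ref{ER2}) and the initial marginal, so the graph is Borel and one applies a measurable selection / Kuratowski--Ryll-Nardzewski type result as in \cite{GRZ,FlaRom}.

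The heart of the matter, as in \cite{FlaRom,GRZ}, is the disintegration and reconstruction properties, and here the compressible structure forces the extra care that motivated introducing $\bfU$. For disintegration: given $U \in \mathcal C(y)$ and a regular time $\tau$ (outside the exceptional set coming from condition (g), which has zero Lebesgue measure by Definition~\ref{def:3.2}), one shows $\mathcal S_{-\tau} U|_{\mathfrak B_\tau}^\omega \in \mathcal C(\omega(\tau))$ for $U$-a.a.\ $\omega$. Conditions (a)--(e) pass to the conditioned measure since they are pathwise; condition (f) is handled by Proposition~\ref{prop:B1} applied to $\mathscr M(\bfphi)$ and its quadratic variation; condition (g) is handled by Proposition~\ref{prop:B2} applied with $\alpha$ the positive part and $\beta$ the dissipation integral, which is exactly why the statement of Definition~\ref{D}(g) is phrased as an almost sure supermartingale. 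Crucially, since $\omega(\tau) \in Y$ $U$-a.s.\ (energy inequality plus continuity equation exclude $\vr \equiv 0$), the conditioned measure starts from a legitimate element of $Y$; here the presence of $\bfxi^3$ in the state is what makes $\omega(\tau)$ carry enough information to restart — without it the velocity at the restart time would be unrecoverable. For reconstruction: given a $\mathfrak B_\tau$-measurable family $Q_\omega$ with $\mathcal S_{-\tau} Q_\omega \in \mathcal C(\omega(\tau))$, one shows $U \otimes_\tau Q \in \mathcal C(y)$ using Theorem~\ref{T2}; conditions (a)--(f) glue by the converse directions of Propositions~\ref{prop:B1} and the martingale gluing lemma, while condition (g) uses the converse direction of Proposition~\ref{prop:B2} — note that $s=0$ remains a regular time for the glued measure because it is regular for $U$. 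The main obstacle I anticipate is precisely making the supermartingale manipulations in (g) rigorous across the conditioning/gluing: one must track the exceptional time set carefully, ensure the moment bounds $\E^U[\alpha_t + \beta_t] < \infty$ needed in Proposition~\ref{prop:B2} hold for the conditioned and glued measures (this is where Remark~\ref{rem:moments}(b) and the requirement that $s=0$ be regular are used), and verify that the energy $\mathcal E_t$, which is only weakly lower semicontinuous in time, behaves well under disintegration — this is the compressible analogue of the delicate energy-balance bookkeeping in \cite[Appendix]{FlaRom}.
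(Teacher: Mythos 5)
Your overall architecture is exactly the paper's: verify the hypotheses of Theorem~\ref{thm:2.8} for the family $\{\mathcal{C}_{\mathrm{NS}}(y)\}_{y\in Y}$ of martingale-problem solutions (non-emptiness and convexity via Theorem~\ref{thm:existence} and Proposition~\ref{prop:def=D}, disintegration and reconstruction via Propositions~\ref{prop:B1}--\ref{prop:B2} at regular times of the almost sure supermartingale in Definition~\ref{D}(g)), and your identification of why $\bfxi^3$ is needed at restart times is the paper's key structural point. Two of your deviations are harmless but worth noting: the extra ``uniform moment bound'' constraint in the definition of $\mathcal{C}(y)$ is redundant (the bounds follow from (g) with $s=0$ regular and deterministic data in $Y$) and, if imposed with fixed constants, would create extra verification work under gluing, so the paper simply omits it; and measurability of $y\mapsto \mathcal{C}_{\mathrm{NS}}(y)$ is obtained in the paper not by a Borel-graph/selection argument but as a byproduct of Proposition~\ref{Pnn1}, which proves stability under convergence of \emph{varying} initial data $y_m\to y$ in $d_H$ and then invokes \cite[Thm.~12.1.8]{StVa}.

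The genuine gap is in your compactness/closedness step. Passing to the limit ``in the martingale and supermartingale conditions'' under weak convergence of $U_m$ in $\mathrm{Prob}[\Omega]$, with Proposition~\ref{prop:B1} invoked ``to preserve the stochastic-integral structure,'' does not work as stated: Proposition~\ref{prop:B1} concerns disintegration, not limits, and the functionals defining $\mathscr{M}(\bfphi)$, its quadratic variation and $\mathscr{S}^n$ contain the nonlinear terms $\bfxi^2\otimes\bfxi^2/\xi^1$, $p(\xi^1)$ and $\mathbf{G}_k(\xi^1,\bfxi^2)$, which are not continuous with respect to the path-space topology of $C_{\mathrm{loc}}([0,\infty);W^{-k,2})$; identifying them in the limit requires strong (a.e.\ pointwise) convergence of the density, i.e.\ the compressible-specific effective-viscous-flux/renormalization machinery. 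The paper's Proposition~\ref{Pnn1} handles this by going back to dissipative martingale solutions via Proposition~\ref{prop:def=D}(2), deriving uniform energy moments by Gronwall from \eqref{N3} with $\tau_1=0$, adding the pressure estimate and the $C^\kappa([0,T];W^{-k,2})$ time-regularity bound, proving tightness of the \emph{joint} laws of $[\vr_m,\vr_m\vu_m,\vu_m,\bfU_m,W_m]$ together with the associated Young measures, applying the Jakubowski--Skorokhod theorem \cite{jakubow}, and then invoking the identification arguments of \cite[Sect.~4.5]{BFHbook} (including the limit passage in the stochastic integral) to conclude that the limit is again a dissipative martingale solution, returning to $\mathcal{C}_{\mathrm{NS}}(y)$ via Proposition~\ref{prop:def=D}(1). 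Without this detour through the enlarged space and the existence machinery, your closedness argument would stall exactly at the pressure, convective and noise terms.
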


For each $y=(y^1,\bfy^2,\bfy^3)\in Y$ we denote by $\CC_{\rm{NS}}(y)$  the set of probability laws
$U^y\in\PO$ solving the martingale problem associated to \eqref{E1}--\eqref{E3} with the initial law $\delta_{y}$.
In order to prove Theorem \ref{thm:main} we aim to apply the abstract result from Theorem \ref{thm:2.8} to the family $\{\CC_{\rm{NS}}(y)\}_{y\in Y}$ of solutions to the martingale problem.

\begin{Proposition}\label{prop:convex}
Let $y=(y^1,\bfy^2,\bfy^3)\in Y$. Then $\CC_{\mathrm{NS}}(y)$ is nonempty and convex. Moreover, for every $U\in \CC_{\mathrm{NS}}(y)$, the marginal at every time $t\in(0,\infty)$ is supported on $Y$.
\end{Proposition}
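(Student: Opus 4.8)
\textbf{Proof plan for Proposition \ref{prop:convex}.}
The plan is to split into three claims: nonemptiness, convexity, and the support property of the time marginals, and to treat them in that order since the later ones lean on the structural features established for the first.

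For \emph{nonemptiness}, I would invoke Theorem \ref{thm:existence} with the deterministic initial law $\Lambda=\delta_{(y^1,\bfy^2)}$. The hypothesis $y\in Y$ is exactly what is needed to check the assumptions of that theorem: $y^1\geq 0$, $y^1\not\equiv 0$ (so that the mass $\intQ{y^1}$ lies in an interval $[\underline\vr,\Ov\vr]$ with $0<\underline\vr$), and the finiteness of $\intQ{|\bfy^2|^2/y^1}$ together with the explicit form of $P$ gives the bound on all moments of the energy (they are in fact constants here, since the initial datum is deterministic). This produces a dissipative martingale solution in the sense of Definition \ref{def:sol}; applying Part 1 of Proposition \ref{prop:def=D} with the $\mathfrak F_0$-measurable choice $\bfU_0\equiv \bfy^3$ yields an element $U$ of $\CC_{\mathrm{NS}}(y)$ whose marginal at $t=0$ is $\delta_y$. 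Here one must double-check that the boundedness constants $\underline\vr,\Ov\vr$ in Theorem \ref{thm:existence} may be chosen to depend on $y$ only — they can, since $\intQ{y^1}$ is a fixed positive number.

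For \emph{convexity}, let $U_0,U_1\in\CC_{\mathrm{NS}}(y)$ and $\lambda\in[0,1]$, and set $U=\lambda U_0+(1-\lambda)U_1$. The point is that every condition in Definition \ref{D} is preserved under convex combinations. Conditions (a)--(e) are statements of the form ``$U(A)=1$'' for a fixed Borel set $A\subset\Omega$, hence stable under mixing. For (f), the process $\mathscr M(\bfphi)$ does not depend on the measure, so being a martingale with the prescribed quadratic variation under both $U_0$ and $U_1$ forces the same under $U$; concretely, for bounded $\mathfrak B_s$-measurable $h\geq 0$ one has $\E^U[h\,\mathscr M(\bfphi)_{s,t}]=\lambda\E^{U_0}[\cdots]+(1-\lambda)\E^{U_1}[\cdots]=0$, and likewise for the compensated square, so that the quadratic variation identity passes through by the uniqueness of the Doob--Meyer decomposition. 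For (g), the defining inequality \eqref{eq:def3.2} of an almost sure supermartingale is linear in the expectation, so it is inherited by $U$; the only subtlety is that the exceptional set $\mathfrak T$ may differ for $U_0$ and $U_1$, but one simply takes the union of the two null sets, which is still Lebesgue-null, and $s=0$ remains a regular time for both hence for the mixture. Finally the marginal at $t=0$ of $U$ is $\lambda\delta_y+(1-\lambda)\delta_y=\delta_y$.

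For the \emph{support of the time marginals}, fix $U\in\CC_{\mathrm{NS}}(y)$ and $t>0$; I must show $U(\bfxi_t\in Y)=1$, i.e. that $U$-a.s. $\xi^1_t\geq 0$, $\xi^1_t\not\equiv 0$, and $\intQ{|\bfxi^2_t|^2/\xi^1_t}<\infty$. Nonnegativity and the integrability of the kinetic energy are immediate from (a) and (c) of Definition \ref{D} (the energy $\mathcal E_t$ being a.s. finite for every $t$ by the $L^\infty_{\rm loc}$ statement, or more carefully for a.e. $t$ and then for all $t$ by weak lower semicontinuity together with the supermartingale structure in (g); this is the analogue of Remark \ref{rem:moments}(b)). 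The genuinely substantive point — and the one I expect to be the main obstacle — is that the density cannot become identically zero: $\xi^1_t\not\equiv 0$ $U$-a.s. This is where the continuity equation (d) enters, as foreshadowed in the text right before Definition \ref{D}. Testing (d) with $\psi\equiv 1$ gives $\intQ{\xi^1_t}=\intQ{\xi^1_0}=\intQ{y^1}>0$ for all $\tau=t$, $U$-a.s., since $y\in Y$ forces $\intQ{y^1}>0$; hence the total mass is conserved and strictly positive, which rules out $\xi^1_t\equiv 0$. Assembling these three facts gives $U(\bfxi_t\in Y)=1$ for every $t>0$, completing the proof. The care needed is mainly in handling the ``for all $t$'' versus ``for a.e. $t$'' quantifiers for the energy, and in making sure that conservation of mass from (d) indeed holds simultaneously for all $\tau$ on a single set of full measure, which follows from the continuity in time of $t\mapsto\intQ{\xi^1_t\psi}$ built into (a).
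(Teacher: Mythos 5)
Your proposal is correct and follows essentially the same route as the paper: nonemptiness via Theorem \ref{thm:existence} applied to $\Lambda=\delta_{(y^1,\bfy^2)}$ combined with Proposition \ref{prop:def=D} (with $\bfU_0=\bfy^3$), convexity because every condition in Definition \ref{D} is an affine constraint on the law of the fixed canonical process (so it is stable under mixtures, with the union of the exceptional time sets handling (g)), and the support of the time marginals from the energy inequality together with conservation of mass from the continuity equation, exactly as indicated in the paper (cf.\ Remark \ref{rem:moments}(b) and the discussion preceding Definition \ref{D}). Your write-up merely supplies details the paper leaves implicit.
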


\begin{proof}
If $y\in Y$ then the assumptions of Theorem
\ref{thm:existence} are satisfied for the initial law $\Lambda=\delta_{(y^1,\bfy^2)}$ and existence of a dissipative martingale solution $\big((\mathcal O,\mf,(\mf_t)_{t\geq0},\prst),\varrho,\bfu,W)$ in the sense of Definition \ref{def:sol} with initial law $\delta_{(y^1,\bfy^2)}$ follows.
In view of Proposition \ref{prop:def=D} we therefore deduce that for each $y\in Y$ the set $\CC_{\mathrm{NS}}(y)$ is not empty.

In order to check the convexity, let $U_{{1}},U_{{2}}\in \CC_{\mathrm{NS}}(y)$ and let $U=\lambda U_{{1}}+(1-\lambda)U_{{2}}$ for some $\lambda\in (0,1)$. Since all the points in Definition \ref{D} involve integration with respect to the measure $U$ of measurable functions on the subset of $\Omega$ where the measure is supported and we work with the canonical process $\bfxi$, the convexity follows immediately.

Finally, as a consequence of the energy inequality, see in particular Remark \ref{rem:moments}, the marginal of $U\in \CC_{\mathrm{NS}}(y)$ at every $t\in(0,\infty)$ is supported on $Y$.
\end{proof}

In order to apply Theorem \ref{thm:2.8} it remains to show
 compactness as well as the disintegration and reconstruction property of the family $\{\CC_{\mathrm{NS}}(y)\}_{y\in Y}$. We are going to do this in the following subsections (see Proposition \ref{Pnn1}--\ref{prop:4.5}). Theorem \ref{thm:main}
follows then from Theorem~\ref{thm:2.8}.

\subsection{Compactness}

The following proposition yields compactness of $\CC_{\mathrm{NS}}(y)$ (choosing $y_{m}$ constant) as well as measurability of the map $y\mapsto \CC_{\mathrm{NS}}(y)$ (using \cite[Thm. 12.1.8]{StVa} for the metric space $(Y,d_H)$).

\begin{Proposition} \label{Pnn1}
Let $(y_m=(\varrho_{m,0},\bfq_{m,0},\bfU_{m,0}))_{m\in\N}\subset Y$ be a sequence converging to some $y=(\varrho_{0},\bfq_{0},\bfU_{0})\in Y$ with respect to the metric $d_H$ given in \eqref{eq:dy}.
Let $U_m\in\mathcal C_{\mathrm{NS}}(y_m)$, $m\in\N$. Then $(U_m)_{m\in\N}$ has a subsequence that converges to
some $U\in\CC_{\mathrm{NS}}(y)$ weakly in $\PO$.
\end{Proposition}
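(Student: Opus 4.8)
The plan is to combine a uniform-in-$m$ a priori estimate coming from the energy inequality with a stochastic compactness argument (Jakubowski--Skorokhod), and then identify the limit as a solution of the martingale problem with the right initial datum. First I would derive uniform bounds: since $y_m \to y$ in $d_H$, the energy $\mathcal{E}_0(y_m) = \intQ{\big(\tfrac12 |\bfq_{m,0}|^2/\varrho_{m,0} + P(\varrho_{m,0})\big)}$ converges, hence is bounded uniformly in $m$; the moment version built into Definition~\ref{D}(g) together with Proposition~\ref{cor:B3FR} (applied at the regular time $s=0$) then yields, for every $T>0$ and every $n\in\N$, bounds of the form $\E^{U_m}\big[\sup_{0\le t\le T}\mathcal E_t^n\big] + \E^{U_m}\big[\big(\int_0^T\intQ{|\nabla_x \partial_t\bfxi^3_t|^2}\dt\big)^n\big]\le c(n,T)$ uniformly in $m$, exactly as in Remark~\ref{rem:moments}(b). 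From these I would read off tightness: $\xi^1$ is tight in $C_{\rm loc}([0,\infty);(L^\gamma,w))$, $\bfxi^2$ in $C_{\rm loc}([0,\infty);(L^{2\gamma/(\gamma+1)},w))$, and $\bfxi^3$ in $C_{\rm loc}([0,\infty);W^{1,2})\cap (W^{1,2}_{\rm loc}(0,\infty;W^{1,2}),w)$, the last being the key new point — $\bfxi^3$ has genuine time regularity because $\partial_t\bfxi^3 = \bfu \in L^2_{\rm loc}(0,\infty;W^{1,2})$, so an Arzel\`a--Ascoli argument in the $W^{-k,2}$-topology together with equi-integrability of the $W^{1,2}$-norm gives compactness of the laws on $\OTN = C_{\rm loc}([0,\infty);W^{-k,2}(\mt,R^{2N+1}))$.

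Next, by Prokhorov there is a subsequence with $U_m \rightharpoonup U$ weakly in $\PO$; passing to a Skorokhod representation (valid on the sub-Polish spaces above, as recalled in the proof of Theorem~\ref{thm:existence}) I would obtain random variables converging a.s. in the respective topologies, and then verify each item of Definition~\ref{D} for $U$. Items (a)--(e) pass to the limit by the a.s.\ convergence and lower semicontinuity/weak continuity, with the usual care that the nonlinear terms $\bfxi^2\otimes\bfxi^2/\xi^1$ and $|\bfxi^2|^2/\xi^1$ are handled via the $d_H$-metric convergence $\bfxi^2_m/\sqrt{\xi^1_m}\to \bfxi^2/\sqrt{\xi^1}$ in $L^2_{t,x}$ (this is precisely why the metric $d_H$ is designed the way it is, and why convexity of $z\mapsto |z|^2/r$ gives the lower semicontinuity needed in (c) and (g)); the renormalized continuity equation (e) passes using the div-curl / compactness machinery of the deterministic theory applied pathwise. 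For (f) and (g), the martingale and supermartingale properties are stable under weak convergence of laws: I would test against $h(\bfxi|_{[0,s]})$ for bounded continuous $h$ and bounded continuous cylindrical functionals, use the uniform moment bounds to get uniform integrability (so that the convergence of expectations survives despite the nonlinearity in the stochastic terms), and invoke the identification of the quadratic variation as in \cite{BFHbook}; that $s=0$ remains a regular time of $\mathscr S^n$ for the limit follows because the inequality at $s=0$ in (g) is an inequality between expectations of functionals that converge (left side lower semicontinuous, right side convergent by strong convergence).

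Finally, the initial condition: the marginal of $U_m$ at $t=0$ is $\delta_{y_m}$ and evaluation at $t=0$, $\omega\mapsto\omega(0)$, is continuous $\OTN\to W^{-k,2}$, so the marginal of $U$ at $t=0$ is $\lim_m \delta_{y_m} = \delta_y$ in ${\rm Prob}[W^{-k,2}]$; combined with the fact (Proposition~\ref{prop:convex}, Remark~\ref{rem:moments}(b)) that the energy bound forces the $t=0$ marginal to live on $Y$ with the full $d_H$-structure — here the convergence $\bfq_{m,0}/\sqrt{\varrho_{m,0}}\to\bfq_0/\sqrt{\varrho_0}$ in $L^2$ is exactly the extra information needed to pin down $\mathcal E_0$ — we conclude $U\in\CC_{\rm NS}(y)$. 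The main obstacle I anticipate is the passage to the limit in the nonlinear and stochastic terms with the weak (in $x$) topologies on $\xi^1,\bfxi^2$: one cannot expect strong convergence of $\varrho_m$ or $\bfq_m$ on a fixed time interval, so the whole argument hinges on the pathwise compactness for the pressure and convective terms from the deterministic compressible theory (effective viscous flux, renormalization) combined with the $d_H$-convergence of $\bfq_m/\sqrt{\varrho_m}$; reconciling the probabilistic weak convergence of laws with this PDE compactness, uniformly in the stochastic integral term, is the delicate core of the proof.
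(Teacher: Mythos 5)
Your overall skeleton (uniform energy/moment bounds from the supermartingale formulation, tightness, Prokhorov plus Jakubowski--Skorokhod, identification of the limit, and recovery of the initial law $\delta_y$ using the $d_H$-convergence of the data) matches the paper's strategy. The structural difference is that the paper does not try to verify Definition \ref{D} directly for the limit law: it first converts each $U_m$ into a dissipative martingale solution via Proposition \ref{prop:def=D}, performs the compactness argument for the enlarged family $[\vr_m,\vr_m\bfu_m,\bfu_m,\bfU_m,W_m,\delta_{[\vr_m,\bfu_m,\nabla\bfu_m]}]$ --- i.e.\ \emph{including the Wiener processes and the associated Young measures} --- identifies the Skorokhod limit as a dissipative martingale solution following \cite{BFHbook}, and only then transfers back to the martingale problem through Proposition \ref{prop:def=D}. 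This detour is not cosmetic, and it is exactly where your proposal has a genuine gap: you defer the strong convergence of the density (needed for $p(\xi^1_m)$, for $\bfxi^2_m\otimes\bfxi^2_m/\xi^1_m$ and for the quadratic variation built from $\vc{G}_k(\xi^1_m,\bfxi^2_m)$) to ``the deterministic compressible theory applied pathwise.'' But the paths of a martingale solution do not satisfy a deterministic momentum balance; the effective viscous flux/renormalization argument and the identification of the limit of the noise terms require passing to the limit in the stochastic integral (cf.\ \cite[Prop.~4.4.13, Sect.~4.5.1]{BFHbook}), and for that the driving Wiener processes $W_m$ (and the Young measures used to identify nonlinear limits) must be carried through the tightness/Skorokhod step. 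Without them, your plan to obtain items (b), (f), (g) by testing against cylindrical functionals $h(\bfxi|_{[0,s]})$ and invoking uniform integrability cannot be closed, because the functionals involved are not continuous in the topologies in which you have convergence, and the missing a.s.\ strong convergence of $\xi^1_m$, $\bfxi^2_m$ is precisely what the flux argument (with the stochastic integral) is needed to produce. Note also that the $d_H$-convergence $\bfq_{m}/\sqrt{\vr_m}\to\bfq/\sqrt{\vr}$ is only assumed at $t=0$; it gives nothing about the nonlinear terms at positive times.

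Two smaller points. First, the claimed tightness of $\bfxi^3_m$ in $C_{\rm loc}([0,\infty);W^{1,2}(\mt,R^N))$ with the strong spatial topology is not available (the bound on $\partial_t\bfxi^3$ is only in $L^2_tW^{1,2}_x$, giving H\"older continuity into $W^{1,2}$ but no spatial compactness); what holds, and what the paper uses, is tightness in $(W^{1,2}_{\rm loc}(0,\infty;W^{1,2}),w)$ combined with Arzel\`a--Ascoli in $W^{-k,2}$, which you also state, so this is an overstatement rather than a fatal error. Second, your derivation of the a priori bounds directly from Definition \ref{D}(g) and Proposition \ref{cor:B3FR} (rather than via the It\^o/BDG computation for the representing dissipative solutions, as in \eqref{apriori1}) is legitimate in the spirit of Remark \ref{rem:moments}(b) and of \cite{FlaRom}; just be aware that Proposition \ref{cor:B3FR} gives a tail bound, so expectations of suprema must be extracted by using the inequality for higher powers $n$.
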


\begin{proof}
On account of Proposition \ref{prop:def=D} there is a sequence $((\mathcal O^{m},\mf^{m},(\mf^m_t)_{t\geq0},\prst),\vr_m,\bfu_m, W_{m})$ of dissipative martingale solutions in the sense of Definition \ref{def:sol} and an $\mf^m_0$-measureable random variable $\bfU_{m,0}$ with values in $W^{1,2}(\mt,R^N)$ such that
\[
U_m = \mathcal L \left[\vr_m, \vc{q}_m = \vr_m \vu_m,\bfU_m=\bfU_{m,0}+\int_0^\cdot\vu_m\,\dd s\right].
\]
Choosing $s=0$ in \eqref{N3} we have for any $0<t\leq T$
\begin{align*}
&\frac{1}{n}  \left[ \intQ{ \left( \frac{1}{2} \vr_m |\vu_m|^2 + P(\vr_m) \right) } \right]^n
\\&+ \int_0^t \left( \left[ \intQ{ \left( \frac{1}{2} \vr_m |\vu_m|^2 + P(\vr_m) \right) } \right]^{ n - 1}
\intQ{ \mathbb{S}(\Grad \vu_m) : \Grad \vu_m } \right) \dtau
\\ & \leq \frac{1}{2} \int_0^t \left( \left[ \intQ{ \left( \frac{1}{2} \vr_m |\vu_m|^2 + P(\vr_m) \right) } \right]^{ n - 1}
\sum_{k=1}^\infty \intQ{ \vr_m^{-1} |\vc{G}_k(\vr_m,\vr_m\bfu_m)|^2 } \right) \dtau  \\
&+ \frac{n-1}{2} \int_0^t \left( \left[ \intQ{ \left( \frac{1}{2} \vr_m |\vu_m|^2 + P(\vr_m) \right) } \right]^{ n - 2 }
\sum_{k=1}^\infty \left( \intQ{  \vc{G}_k (\vr_m,\vr_m\bfu_m)\cdot \vu_m } \right)^2 \right) \dtau
\\
&+ \int_0^t \left( \left[ \intQ{ \left( \frac{1}{2} \vr_m |\vu_m|^2 + P(\vr_m) \right) } \right]^{ n - 1}
\sum_{k = 1}^\infty \intQ{ \vc{G}_k(\vr_m,\vr_m\bfu_m) \cdot \vu_m } \right) {\rm d}W_{m,k}\\
&+\frac{1}{n} \left[ \intQ{ \left( \frac{|\bfq_{m,0}|^2}{2\varrho_{m,0}} + P(\vr_{m,0}) \right) } \right]^n.
\end{align*}
By assumption on the initial data and the definition of the metric $d_Y$ in \eqref{eq:dy} the last term stays bounded uniformly in $m$.
All the other terms on the right-hand side, which we denote by $(I),(II)$
and $(III)$, need to be estimated. By \eqref{growth1'} we have
\begin{align*}
(I)&\leq \,c\, \int_0^T \left( \left[ \intQ{ \left( \frac{1}{2} \vr_m |\vu_m|^2 + P(\vr_m) \right) } \right]^{ n - 1}
\intQ{\big(\vr_m+\vr_m|\bfu_m|^2\big)} \right) \dt\\
&\leq\,c(T)+c\,  \int_0^T  \left[ \intQ{ \left( \frac{1}{2} \vr_m |\vu_m|^2 + P(\vr_m) \right) } \right]^{ n }\dt.
\end{align*}
Similarly, we obtain
\begin{align*}
(II)&\leq\,c\,\int_0^T  \left[ \intQ{ \left( \frac{1}{2} \vr_m |\vu_m|^2 + P(\vr_m) \right) } \right]^{ n - 2 }\times\\
&\times\sum_{k=1}^\infty\intQ{\varrho_m^{-1}|\bfG_k(\varrho_m,\varrho_m\bfu_m)|^2} \intQ{\varrho_m|\bfu_m|^2} \dt\\
&\leq\,c(T)+c\,\int_0^T  \left[ \intQ{ \left( \frac{1}{2} \vr_m |\vu_m|^2 + P(\vr_m) \right) } \right]^{ n }\dt.
\end{align*}
Finally, the Burkholder-Davis-Gundy inequality yields for $r>0$ arbitrary
\begin{align*}
\E&\bigg[\sup_{0\leq t\leq T}(III)\bigg]^r\\&\leq\,c\,\E\bigg[\int_0^T \left[ \int_{\Q} \left( \frac{1}{2} \vr_m |\vu_m|^2 + P(\vr_m) \right) \dx \right]^{ 2n - 2}
\sum_{k = 1}^\infty \left(\int_{\Q} \vc{G}_k(\vr_m,\vr_m\bfu_m) \cdot \vu_m \dx \right)^2\dt\bigg]^{\frac{r}{2}}\\
&\leq\,c\,\E\bigg[\int_0^T \left[ \int_{\Q} \left( \frac{1}{2} \vr_m |\vu_m|^2 + P(\vr_m) \right) \dx \right]^{ 2n - 2}\times\\& \times
\sum_{k = 1}^\infty \int_{\Q}\varrho_m^{-1}|\bfG_k(\varrho_m,\varrho_m\bfu_m)|^2\dx \int_{\Q}\varrho_m|\bfu_m|^2\dx \dt\bigg]^{\frac{r}{2}}\\
&\leq\,c(T)+c\, \bigg[\int_0^T\bigg[\int_{\Q} \left( \frac{1}{2} \vr_m |\vu_m|^2 + P(\vr_m) \right) \dx \bigg]^{ 2n }\dt\bigg]^{\frac{r}{2}}\\
&\leq\,c(T)+\frac{1}{2}\, \bigg[\sup_{0\leq t\leq T}\bigg[\int_{\Q} \left( \frac{1}{2} \vr_m |\vu_m|^2 + P(\vr_m) \right) \dx \bigg]^{ n }\dt\bigg]^r\\&+c\, \E\bigg[\int_0^T\bigg[\int_{\Q} \left( \frac{1}{2} \vr_m |\vu_m|^2 + P(\vr_m) \right) \dx \bigg]^{ n }\dt\bigg]^r.
\end{align*}
By Gronwall's lemma we get
\begin{align}\label{apriori1}
& \E\bigg[\sup_{0\leq t\leq T}\left[ \intQ{ \left( \frac{1}{2} \vr_m |\vu_m|^2 + P(\vr_m) \right) } \right]^n\bigg]^r
\\&+ \E\bigg[\int_0^T \left( \left[ \intQ{ \left( \frac{1}{2} \vr_m |\vu_m|^2 + P(\vr_m) \right) } \right]^{ n - 1}\nonumber
\intQ{ \mathbb{S}(\Grad \vu_m) : \Grad \vu_m } \right) \dtau\bigg]^r\\&\leq\,c(n,r,T)\nonumber
\end{align}
uniformly in $m$ for all $T>0$ and all $n,r\in\N$. As in \cite[Section 4.5.2]{BFHbook} we can infer from \eqref{apriori1} the following uniform pressure estimate
\begin{equation*}
\expe{ \left|\int_0^T  \intQ{  p(\vr_m) \vr^\beta_m } \dt \right|^r } \aleq c(r,\Ov{\vr},T)
\end{equation*}
for a certain $\beta > 0$. Here $\overline\vr$ is given by
\begin{align*}
\overline\vr&=\sup_{m\in\N} \intQ{\vr_{m,0}}.
\end{align*}
As in \cite[Prop. 4.5.4.]{BFHbook} we can use momentum and continuity equation to gain  information about the regularity in time:
There exist $\kappa>0$ such that
\begin{equation}\label{apriori3}
\stred\big\|(\vr_m,\vr_m\bfu_m)\|_{C^\kappa([0,T];W^{-k,2}(\mt))}\leq C(T)
\end{equation}
for all $T>0$.
Combining \eqref{apriori1}--\eqref{apriori3} we can show that the
family of joint laws
\[
\left\{\mathcal{L} \left[ \vr_m,
\vr_m \vu_m,\bfu_m, \bfU_m, W_{m}, \delta_{[\vr_m, \vu_m,\nabla\vu_m]}\right] ;\,m\in\N\right\}
\]
is tight on
\[
\mathcal{X} =  \mathcal{X}_{\vr} \times \mathcal{X}_{\vr \vu} \times \mathcal{X}_{\bfu} \times \mathcal{X}_{\bfU}
\times \mathcal{X}_{W} \times \mathcal{X}_{\nu},
\]
where
\begin{align*}
\mathcal{X}_{\vr} &= \big(L^{\gamma + \beta}_{\mathrm{loc}}(0,\infty;L^{\gamma + \beta}(\Q)),w\big) \cap C_{\mathrm{loc}}\big([0,\infty);\big( L^\gamma(\Q),w\big)\big)\cap
C_{\mathrm{loc}}([0,\infty); W^{-k,2}(\Q)),\\
\mathcal{X}_{\vr \vu} &= C_{\mathrm{loc}}\big([0,\infty); \big(L^{\frac{2\gamma}{\gamma + 1}}(\Q;R^N),w\big)\big)\cap
C_{\mathrm{loc}}([0,\infty); W^{-k,2}(\Q;R^N)),\\
\mathcal{X}_{\bfu} &= \left(L^{2}_{\mathrm{loc}}(0,\infty; W^{1,2}(\Q,R^N)),w\right),\\
\mathcal{X}_{\bfU} &= \left(W^{1,2}_{\mathrm{loc}}(0,\infty; W^{1,2}(\Q,R^N)),w\right),\\
\mathcal{X}_{W} &= C_{\mathrm{loc}}([0,T); \mathfrak{U}_0)\\
\mathcal{X}_{\nu} &= (L^\infty_{\mathrm{loc}}((0,\infty) \times \Q; {\rm Prob}( \R^{13})),w^*).
\end{align*}
Note that we also include the Young measure $\nu_m$ associated to $[\vr_{m},\vu_m,\nabla\vu_m]$, that is the weak-$*$ measurable mapping
$$\nu_m:[0,T]\times\mt\to {\rm Prob}\big(\R\times\R^3\times\R^{3\times 3}\big)\simeq {\rm Prob}\big(\R^{13}),\quad
\nu_{m,t,x}(\cdot)=\delta_{[\vr_m,\vu_m,\nabla\vu_m](t,x)}(\cdot).
$$

In particular, we have shown that $\{U_m;\,m\in\N\}$ is tight. By Prokhorov's theorem there is a subsequence converging weakly to some
$U\in\PO$. It remains to show that $U\in \mathcal C_{\mathrm{NS}}(y)$. Following \cite[Prop. 4.5.5.]{BFHbook} we obtain by the Jakubowski--Skorokhod theorem \cite{jakubow} the existence of a complete probability space $(\tilde\Omega,\tilde\mf,\tilde\prst)$ with $\mathcal{X}$-valued  random variables $\left[\tilde\vr_m, \tilde{\bfq}_m,\tilde\bfu_m, \tilde\bfU_m, \tilde W_m, \tilde\nu_{m} \right]$, $m\in\N$, as well as $\left[\tilde\vr, \tilde{\bfq},\tilde\bfu, \tilde\bfU, \tilde W, \tilde\nu \right]$ such that (up to a subsequence)
\begin{enumerate}
 \item  $\mathcal L\left[\tilde\vr_m, \tilde{\bfq}_m, \tilde\bfu_m,\tilde\bfU_m,\tilde W_m,  \tilde\nu_{m} \right]$ and $\mathcal L\left[ {\vr}_m, {\vr_m \vu_m},\bfu_m, {\bfU}_m,  {W}_{m}, {\delta}_{[{\vr_m}, {\vu_m},\nabla\vu_m]}\right]$ coincide for all $m\in\N$, in particular, we have  $\mathcal L[{\tilde\vr}_m, {\tilde\vr_m \tilde\vu_m},\tilde\bfU_{m}]=U_m$;
\item the law of $\left[\tilde\vr,\tilde\bfq,\tilde\bfu, \tilde{\bfU}, \tilde{W} , \tilde{\nu}\right]$ on $\mathcal{X}$  is a Radon measure,
 \item $\left[\tilde{\vr}_m, \tilde{\vr}_m \tilde\vu_m, \tilde\bfu_m,\tilde{\bfU}_m,  \tilde{W}_m , \tilde{\nu}_{m}\right]$ converges $\tilde{\mathbb P}$-a.s. to $\left[\tilde\vr, \tilde{\bfq}, \tilde\bfu,\tilde\bfU, \tilde W, \nu \right]$ in the topology of $\mathcal{X}$;

\item for any Carath\'eodory function $H=H(t,x,\rho,\bfv,\bfV)$ where $(t,x)\in (0,T)\times\mt,$ $(\rho,\bfv,\bfV)\in \R^{13}$, satisfying for some $q>0$ the growth condition
$$
|H(t,x,\rho,\bfv,\bfV)|\lesssim 1 +|\rho|^{q_1}+|\bfv|^{q_2}+|\bfV|^{q_2},
$$
uniformly in $(t,x)$,
denote $\Ov{H(\tilde{\vr}, \tilde{\vu},\nabla\tilde{\vu}) }(t,x)=\langle \tilde\nu_{t,x},H\rangle$. Then it holds true that
\begin{equation*}
\begin{split}
H(\tilde{\vr}_m, \tilde{\vu}_m,\nabla\tilde\vu_m) &\rightharpoonup \Ov{H(\tilde{\vr}, \tilde{\vu}, \nabla\tilde\vu) } \ \mbox{ in }\ L^r((0,T) \times \mt)\\
\ \mbox{ for all }\ & 1<r\leq \frac{\gamma+\beta}{q_1}\wedge \frac{2}{q_2},
\end{split}
\end{equation*}
as $m \to \infty$  $\tilde{\mathbb{P}}$-a.s.
\end{enumerate}

Finally,  it remains to show that $U$ which is the probability law of $[\tilde\vr,\tilde\bfq,\tilde\bfU]$ solves the martingale problem associated to \eqref{E1}--\eqref{E3}. First observe that we have $\tilde\bfu=\partial_{t}\tilde\bfU$ and $\tilde\bfq=\tilde\vr\tilde\vu$ $\tilde\p$-a.s.
Next, we observe that $[\tilde\vr,\tilde\vu]$ is a dissipative martingale solution
to \eqref{E1}--\eqref{E3} in the sense of Definition \ref{def:sol}. This can be shown exactly as in \cite[Sect. 4.5.1.]{BFHbook}. The only difference is the improved energy inequality \eqref{N3}. However, the main difficulty is still the passage to the limit in the stochastic integral, which can be shown along the lines of \cite[Prop. 4.4.13.]{BFHbook}. The rest follows from the improved moment estimates given in \eqref{apriori1}.

In addition, from the convergence of the deterministic initial conditions $\bfU_{m,0}\to\bfU_{0}$ it follows that $\tilde\bfU_{0}=\bfU_{0}$. To summarize, we proved that
$$
U=\mathcal{L}\left[\tilde\vr,\tilde\vr\tilde\vu,\tilde\bfU=\bfU_{0}+\int_{0}^{\cdot}\tilde\vu\,\dd s\right],
$$
where $[\tilde\vr,\tilde\vu]$ is a dissipative martingale solution to \eqref{E1}--\eqref{E3}. According to Proposition \ref{prop:def=D}, $U$ is a solution to the martingale problem with the initial law given by $\delta_{(\vr_{0},\bfq_{0},\bfU_{0})}$. Hence $U\in\CC_{\rm{NS}}(y)$ and the proof is complete.
\end{proof}

\subsection{Disintegration property}
In this subsection we prove that the family $\{\CC_{\mathrm{NS}}(y)\}_{y\in Y}$ is stable with respect to disintegration.
\begin{Proposition}\label{prop:4.4}
The family $\{\CC_{\mathrm{NS}}(y)\}_{y\in Y}$ has the disintegration property of Definition~\ref{def:2.5}.
\end{Proposition}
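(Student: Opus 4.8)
The plan is to follow the disintegration step in the Navier--Stokes argument of \cite{FlaRom}, transferring each of the conditions (a)--(g) of Definition~\ref{D} through the regular conditional probabilities supplied by Theorem~\ref{T1}. Fix $y\in Y$ and $\mathcal U\in\CC_{\mathrm{NS}}(y)$. For $\tau>0$ let $\mathcal U|^\omega_{\mathfrak B_\tau}\in{\rm Prob}[\Omega^{[\tau,\infty)}]$ be the disintegration of $\mathcal U$ at time $\tau$ from Theorem~\ref{T1} and set $\mathcal U^\tau_\omega:=\mathcal S_{-\tau}\,\mathcal U|^\omega_{\mathfrak B_\tau}\in{\rm Prob}[\Omega^{[0,\infty)}]$, which depends $\mathcal U$-measurably on $\omega$. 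I take
\[
\mathfrak T:=\bigcup_{n\geq 1}\big\{\,\tau>0:\ \tau\ \text{is an exceptional time of }\mathscr S^n\ \text{under }\mathcal U\,\big\},
\]
a Lebesgue-null set (a countable union of null sets, by Definition~\ref{def:3.2}). The claim is that for every $\tau\notin\mathfrak T$ one has $\mathcal U^\tau_\omega\in\CC_{\mathrm{NS}}(\omega(\tau))$ for $\mathcal U$-a.a.\ $\omega$, which is precisely the disintegration property of Definition~\ref{def:2.5}.

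First I check the initial condition. By Theorem~\ref{T1}(a), $\mathcal U|^\omega_{\mathfrak B_\tau}$ is, for $\mathcal U$-a.a.\ $\omega$, concentrated on paths taking the value $\omega(\tau)$ at time $\tau$, so $\mathcal U^\tau_\omega$ has time-$0$ marginal $\delta_{\omega(\tau)}$; and by Proposition~\ref{prop:convex} the time-$\tau$ marginal of $\mathcal U$ is supported on $Y$, hence $\omega(\tau)\in Y$ is an admissible datum for $\mathcal U$-a.a.\ $\omega$. Next, the structural and pathwise conditions (a)--(e) hold on Borel sets of full $\mathcal U$-measure; running the identities in (d), (e) and the regularity statements in (a)--(c) over the interval $[\tau,\infty)$ and re-anchoring their ``$t=0$'' evaluation to $t=\tau$ (which is consistent, since $\mathcal U|^\omega_{\mathfrak B_\tau}$-a.s.\ the value at $\tau$ is $\omega(\tau)$), these become $\sigma(\bfxi|_{[\tau,\infty)})$-measurable events; being of full $\mathcal U$-measure they are also of full $\mathcal U|^\omega_{\mathfrak B_\tau}$-measure for $\mathcal U$-a.a.\ $\omega$ by Theorem~\ref{T1}(b), and hence of full $\mathcal U^\tau_\omega$-measure under the isometry $\mathcal S_\tau$. (As in \cite{BFHbook} it suffices to impose these identities over countable dense families of test functions $\psi,\bfphi$ and renormalizing functions $b$, so that all the events in play are genuinely Borel; cf.\ Remark~\ref{ER2}.)

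For the martingale condition (f) I fix $\bfphi$ from a countable dense family of trigonometric polynomials and apply Proposition~\ref{prop:B1} with $t_0=\tau$ to the continuous $(\mathfrak B_t)$-adapted process $\mathscr M(\bfphi)$ and its quadratic variation $\zeta^{\bfphi}_\sigma=\tfrac12\int_0^\sigma\sum_{k=1}^\infty\big(\intQ{\vc{G}_k(\xi^1_t,\bfxi^2_t)\cdot\bfphi}\big)^2\dt$: this gives, for $\mathcal U$-a.a.\ $\omega$, that $(\mathscr M(\bfphi)_t)_{t\geq\tau}$ is a square-integrable $\big((\mathfrak B_t)_{t\geq\tau},\mathcal U|^\omega_{\mathfrak B_\tau}\big)$-martingale whose quadratic variation, for $t\geq\tau$, is $\zeta^{\bfphi}_t-\zeta^{\bfphi}_\tau$. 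A direct computation shows that under $\mathcal S_{-\tau}$ the functional $\mathscr M(\bfphi)$ of the canonical process becomes exactly the increment $\mathscr M(\bfphi)_{\tau+\cdot}-\mathscr M(\bfphi)_\tau$ of the original one (the ``$t=0$'' boundary term in Definition~\ref{D}(f) corresponding to $\mathscr M(\bfphi)_\tau$), and $\zeta^{\bfphi}$ becomes $\zeta^{\bfphi}_{\tau+\cdot}-\zeta^{\bfphi}_\tau$, so the above is precisely condition (f) for $\mathcal U^\tau_\omega$; intersecting the exceptional $\mathcal U$-null sets over the countably many $\bfphi$ preserves it. For the supermartingale condition (g) I write, for each $n\geq 1$, $\mathscr S^n=\alpha^n-\beta^n$ with $\beta^n$ the continuous non-decreasing sum of the last two integrals defining $\mathscr S^n$ and $\alpha^n_\sigma=\tfrac1n\mathcal E^n_\sigma+\int_0^\sigma\mathcal E^{n-1}_t\intQ{\mathbb S(\Grad\ups_t):\Grad\ups_t}\dt$; both are non-negative and $\alpha^n$ is left lower semicontinuous, since $\mathcal E^n$ is lower semicontinuous in time (weak continuity of $\xi^1,\bfxi^2$ together with convexity of the energy density) and the remaining term is continuous. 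Because $\E^{\mathcal U}[\alpha^n_t+\beta^n_t]<\infty$ by Remark~\ref{rem:moments}(b) and $\tau\notin\mathfrak T$ is a regular time of $\mathscr S^n$ under $\mathcal U$, Proposition~\ref{prop:B2}, together with the preservation of regular times under disintegration (as in \cite{FlaRom}, Appendix~B), yields for $\mathcal U$-a.a.\ $\omega$ that $(\mathscr S^n_t)_{t\geq\tau}$ is an almost sure $\big((\mathfrak B_t)_{t\geq\tau},\mathcal U|^\omega_{\mathfrak B_\tau}\big)$-supermartingale with $\tau$ a regular time; translating by $\mathcal S_{-\tau}$ (which merely adds the $\mathfrak B_\tau$-measurable constant $\tfrac1n\mathcal E^n_\tau$, irrelevant for the supermartingale property) this is condition (g), with $s=0$ regular, for $\mathcal U^\tau_\omega$. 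Intersecting over $n\geq 1$ finishes the verification.

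The main obstacle is not a single hard estimate; the argument is structurally the same as in \cite{FlaRom}, the compressible complication (that $\vu$ is not recoverable from $[\vr,\vr\vu]$) having already been removed by carrying the auxiliary variable $\bfU=\bfxi^3$ along in the canonical process. The delicate points are purely organisational: making sure the ``$t=0$'' boundary terms inside $\mathscr M(\bfphi)$ and $\mathscr S^n$ transform correctly under the shift $\mathcal S_{-\tau}$, so that the increment characterisations of Propositions~\ref{prop:B1} and \ref{prop:B2} match Definition~\ref{D}(f)--(g) verbatim; and ensuring that conditioning at a regular time $\tau$ of the \emph{almost sure} supermartingale $\mathscr S^n$ keeps $s=0$ regular for $\mathcal U^\tau_\omega$, which is the only place the ``almost sure'' (rather than genuine) supermartingale machinery of \cite{FlaRom} really enters.
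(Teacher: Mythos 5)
Your proposal is correct and follows essentially the same route as the paper: disintegrate via Theorem~\ref{T1} at a regular time of the supermartingales $\mathscr S^n$, transfer the pathwise conditions (a)--(e) through the disintegration formula using countable families of test functions, and handle (f) and (g) with Propositions~\ref{prop:B1} and~\ref{prop:B2} applied to the same $\alpha^n-\beta^n$ decomposition. The only cosmetic difference is that you fold condition (b) into the general pathwise transfer and spell out the shift bookkeeping and the exceptional-time set $\mathfrak T$ more explicitly than the paper does.
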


\begin{proof}
Let $y\in Y$ and $U \in \CC_{\mathrm{NS}}(y)$. Further let $T \geq 0$ be a regular time (i.e. a time at which the energy inequality in the sense of Definition \ref{D} (e) holds). In accordance with Theorem~\ref{T1},
there is a family of probability measures,
\[
\OTN\ni\tilde{\omega}\mapsto U|^{\tilde{\omega}}_{\mathfrak{B}_T} \in {\rm Prob}[\OTp]
\]
such that
\begin{equation} \label{De2}
\begin{split}
&\omega (T) = \tilde{\omega}(T), \ U|^{\tilde{\omega}}_{\mathfrak{B}_T}\mbox{-a.s.},\quad
U \left( \omega|_{[0,T]} \in {A}, \ \omega|_{[T, \infty)}
\in {B} \right) = \int_{\tilde{\omega} \in {A}}
U|^{\tilde{\omega}}_{\mathfrak{B}_T} \left( {B} \right) \ {\rm d}U,
\end{split}
\end{equation}
for any Borel sets ${B} \subset \OTp$, ${A} \subset \OTm$.
Our goal is to prove that
\begin{align*}
\mathcal S_{-T} U|_{\mathfrak B_\tau}^{\tilde \omega}\in \mathcal C(\omega(T))\ \mbox{for} \ {\tilde\omega} \in \OTN \ U-\mbox{a.s.}
\end{align*}
We aim at finding an $U|^{\tilde\omega}_{\mathfrak{B}_{T}}$-nullset $N$ outside of which points (a)--(f) from Definition \ref{D} hold for $U|^{\tilde\omega}_{\mathfrak{B}_{T}}$. In fact, we will relate nullsets $N_a,\dots, N_f$ to each of the points (a)--(f) and set $N=N_a\cup\dots\cup N_f$. The crucial part hereby is (b), the rest is similar to \cite[Lemma 4.4]{FlaRom}.
However, we still give the details for the convenience of the reader.\\
(a) Setting
 \begin{align*}
S_T&=\Big\{\omega\in\OTN:\,\,\omega|_{[0,T]}\in C\big([0, T]; \big(L^{\gamma}(\mt),w\big)\big)\times C\big(0,T;\big(L^{\frac{2\gamma}{\gamma+1}}(\mt;R^N),w\big)\big)\times\\&\qquad\qquad\qquad\qquad\quad\times W^{1,2}(0,T;W^{1,2}(\mt;R^N))\Big\},\\
S^T&=\Big\{\omega\in\OTN:\,\,\omega|_{[T,\infty)}\in C_{\mathrm{loc}}\big([T,\infty);\big(L^\gamma(\mt),\big)\big)\times C_{\mathrm{loc}}\big([T,\infty);\big(L^{\frac{2\gamma}{\gamma+1}}(\mt;R^N),w\big)\big)\times\\&\qquad\qquad\qquad\qquad\quad\times W^{1,2}_{\mathrm{loc}}(T,\infty;W^{1,2}(\mt;R^N))\Big\},
\end{align*}
we obtain by (a) for $U$ that
$$1=U(S_T\cap S^T)=\int_{S_T}U|^{\tilde\omega}_{\mathfrak{B}_{T}}(S^T)\,\dd U(\tilde\omega).$$
Consequently, there holds $U|^{\tilde\omega}_{\mathfrak{B}_{T}}(S^T)=1$
for $U$-a.a. $\tilde\omega$. The remaining $\tilde\omega\in\Omega$ are contained in a nullset $N_a$.\\
(b) Due to \eqref{De2} and (a) for $U$ we obtain
\begin{align*}
1&=U(\bfxi^2=\xi^1\ups)=U\big(\bfxi^2|_{[0,T]}=(\xi^1\ups)|_{[0,T]},\bfxi^2|_{[T,\infty)}=(\xi^1\ups)|_{[T,\infty)}\big)\\
&=\int_{\{\bfxi^2|_{[0,T]}=(\xi^1\ups)|_{[0,T]}\}}U|^{\tilde\omega}_{\mathfrak{B}_{T}}\big(\bfxi^2|_{[T,\infty)}=(\xi^1\ups)|_{[T,\infty)}\big)\dd U(\tilde\omega),
\end{align*}
which implies that
$$
U|^{\tilde\omega}_{\mathfrak{B}_{T}}\big(\bfxi^2|_{[T,\infty)}=(\xi^1\ups)|_{[T,\infty)}\big)=1\quad U\text{-a.e. }\tilde\omega\in\Omega.
$$
So, we have $\bfxi^2|_{[T,\infty)}=(\xi^1\ups)|_{[T,\infty)}$ $U|^{\tilde\omega}_{\mathfrak{B}_{T}}$-a.s. and obtain the nullset $N_b$.\\
(c) Recalling $\mathcal E_{t} = \int_{\mt}\left[ \frac{1}{2} \frac{|\bfxi_{t}^2|^2}{\xi_{t}^1} +P(\xi_{t}^1) \right]\dx $ we consider the sets
 \begin{align*}
\mathfrak E_T&=\Big\{\omega\in\OTN:\,\,\mathcal E|_{[0,T]}\in L^\infty_{\mathrm{loc}}(0,T)\Big\},\\
\mathfrak E^T&=\Big\{\omega\in\OTN:\,\,\mathcal E|_{[T,\infty)}\in L^\infty_{\mathrm{loc}}(T,\infty)\Big\}.
\end{align*}
As (c)  holds for $U$ we can argue as in the proof of (a) (replacing $S_T$ and $S^T$ by $\mathfrak E_T$ and $\mathfrak E^T$ respectively)
to conclude that $U|^{\tilde\omega}_{\mathfrak{B}_{T}}(\mathfrak E^T)=1$
for $U$-a.a. $\tilde\omega$. This gives us the nullset $N_c^n$.
(d)  Let $(\psi_n)_{n\in\N}$ be a dense subset of $W^{k,2}(\mt)$. To each $n\in\N$ we will relate an $U$-nullset $N_c^n$ and set $N_d=\cup_{n\in\N}N_d^n$. Let us fix some $n\in\N$. We split the continuity equation into two part, namely
\begin{align}
\left[ \intQ{ \xi_{t}^1 \psi_n } \right]_{t = 0}^{t = \tau} +
\int_0^\tau \intQ{ \bfxi_{t}^2 \cdot \Grad \psi_n  } \dt = 0 \quad \forall0\leq\tau\leq T\label{C1},\\
\left[ \intQ{ \xi_{t}^1 \psi_n } \right]_{t = T}^{t = \tau} +
\int_0^\tau \intQ{ \bfxi_{t}^2 \cdot \Grad \psi_n  } \dt = 0 \quad \forall T\leq\tau<\infty. \label{C2}
\end{align}
Now we consider the sets
 \begin{align*}
R_T&=\Big\{\omega\in\OTN:\,\,\omega|_{[0,T]}\text{ satisfies \eqref{C1}}\Big\},\\
R^T&=\Big\{\omega\in\OTN:\,\,\omega|_{[T,\infty)}\text{ satisfies \eqref{C2}}\Big\}.
\end{align*}
As (d)  holds for $U$ we can argue again
as in the proof of (a) and (c) to obtain
the nullset $N_d^n$.
 \\
(e) follows by exactly the same reasoning as in (d).\\
(f) Let $(\bfphi_n)_{n\in\N}$ be a dense subset of $W^{k,2}(\mt;R^N)$. As in (d) we will set $N_f=\cup_{n\in\N}N_f^n$ where for each $n\in\N$ we will obtain an $U$-nullset $N_f^n$. Since (f) holds for $U$ we know that $(\mathscr M_t(\bfphi_n))_{t\geq0}$ is a $((\mB_t)_{t\geq0},U)$-square integrable martingale with quadratic variation
\[
\mathscr{N}(\bfphi_{n})_\tau=\frac{1}{2} \int_0^\tau \sum_{k=1}^\infty \left( \intQ{ \vc{G}_k(\xi_{t}^1,\bfxi_{t}^2) \cdot \bfphi_n } \right)^2\dt.
\]
On account of Proposition \ref{prop:B1} we obtain for $U$-a.a. $\tilde\omega$ that $(\mathscr M_t(\bfphi_n))_{t\geq T}$ is a $((\mB_t)_{{t\geq T}},U|^{\tilde\omega}_{\mathfrak{B}_{T}})_{t\geq T}$-square integrable martingale with quadratic variation $(\mathscr{N}(\bfphi_{n}))_{{t\geq T}}$.\\
(g) We recall that
$\mathcal{E}_t=\int_{\mt}\big[\tfrac{|\bfxi^{2}_{t}|^2}{2\xi^{1}_{t}}+P(\xi^{1}_{t})\big]\dx$
and decompose the process $\mathscr S_\tau^n$ as $\mathscr S_\tau^n=\alpha_\tau^n-\beta_\tau^n$, where
\[
\begin{split}
\alpha_\tau^n&= \frac{1}{n} \mathcal{E}_\tau^n + \int_0^\tau \left( \mathcal{E}^{n-1}_t \intQ{ \mathbb{S}(\Grad \ups_{t})
: \Grad \ups_{t} } \right) \dt,\\
\beta_\tau^n&= \frac{1}{2} \int_0^\tau \left( \mathcal{E}^{n-1}_t \sum_{k=1}^\infty \intO{  \frac{|\vc{G}_k(\xi_{t}^1,\bfxi_{t}^2)|^2}{\xi_t^1} }
\right) \dt \\
&+ \frac{n-1}{2} \int_0^\tau \left( \mathcal{E}^{n-2}_t \sum_{k=1}^\infty \left( \intO{ \vc{G}_k(\xi_{t}^1,\bfxi_{t}^2)\cdot \ups_{t} } \right)^2
\right) \dt .
\end{split}
\]
By (g) for $U$ we know that $ (\mathscr S_t^n)_{t\geq T}$ is an almost sure $((\mB_t)_{t\geq T},U)$-supermartingale and we can show iteratively that $\E^U[\alpha_\tau^n]<\infty$ and $\E^U[\beta_\tau^n]<\infty$ for all $n\in \N$.
We note that $\alpha^{n}_\tau$ is lower semi-continuous, and $\beta_\tau$ non-decreasing such that $\mathscr S^{n}_\tau$ is left lower semi-continuous. Hence,
Proposition \ref{prop:B2} is applicable and yields for $U$-a.a. $\tilde\omega$ that $(\mathscr S_t^n)_{t\geq T}$ is an almost sure $((\mB_t)_{t\geq T},U|^{\tilde\omega}_{\mathfrak{B}_{T}})$-supermartingale. This gives an $U$-nullset $N_g^n$ and we set $N_g=\cup_{n\in\N}N_g^n$.
\end{proof}

\subsection{Reconstruction}
In this subsection we prove that the family $\{\CC_{\mathrm{NS}}(y)\}_{y\in Y}$ is stable with respect to reconstruction.
\begin{Proposition}\label{prop:4.5}
The family $\{\CC_{\mathrm{NS}}(y)\}_{y\in Y}$ has the reconstruction property of Definition~\ref{def:2.5}.
\end{Proposition}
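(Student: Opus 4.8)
The plan is to verify directly that the measure produced by Theorem~\ref{T2} solves the martingale problem of Definition~\ref{D} with the prescribed initial datum. Fix $y\in Y$, $U\in\CC_{\mathrm{NS}}(y)$, and (exactly as in the proof of Proposition~\ref{prop:4.4}) let $\mathfrak T\subset(0,\infty)$ be the Lebesgue-null set collecting, for every $n\in\N$, the exceptional times of the almost sure supermartingale $\mathscr S^n$ under $U$; fix $\tau\notin\mathfrak T$. Let $\omega\mapsto Q_\omega$ be a $\mathfrak B_\tau$-measurable map into $\mathrm{Prob}(\Omega^{[\tau,\infty)})$ with $\mathcal S_{-\tau}Q_\omega\in\CC_{\mathrm{NS}}(\omega(\tau))$ for $U$-a.a. $\omega$; this is meaningful because $\omega(\tau)\in Y$ for $U$-a.a. $\omega$ by Proposition~\ref{prop:convex}. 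Put $\mathcal U:=U\otimes_\tau Q$; by Theorem~\ref{T2} it agrees with $U$ on $\mathfrak B_\tau$, and $\mathcal U|_{\mathfrak B_\tau}^\omega=Q_\omega$ for $\mathcal U$-a.a. $\omega$. Since $\mathfrak B_0\subset\mathfrak B_\tau$, the initial law of $\mathcal U$ equals $\delta_y$, so it remains to verify points (a)--(g) of Definition~\ref{D} for $\mathcal U$.

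Points (a)--(e) amount to gluing. Testing against the countable dense families of test functions used in the proof of Proposition~\ref{prop:4.4}, each of these conditions holds $U$-a.s. for the restriction of the trajectory to $[0,\tau]$ and, after applying the isometry $\mathcal S_\tau$, $\mathcal U|_{\mathfrak B_\tau}^\omega$-a.s. for the restriction to $[\tau,\infty)$; the two pieces share the value at $\tau$ by Theorem~\ref{T1}(a), the integral identities in (d) and (e) are additive across $[0,\tau]$ and $[\tau,\infty)$, and continuity of the canonical trajectory in $X$ together with the weak/Sobolev regularity in (a) ensures that the glued trajectory still lies in the required spaces and that $\bfxi^2=\xi^1\partial_t\bfxi^3$ holds $\mathcal U$-a.s. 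In the same way the energy is $\mathcal U$-a.s. locally bounded.

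For point (f) fix $\bfphi$. On $[0,\tau]$ the process $\mathscr M(\bfphi)$ is a square integrable $((\mathfrak B_t)_{t\geq0},U)$-martingale with quadratic variation $\mathscr N(\bfphi)$, and since its restriction to $[0,\tau]$ is $\mathfrak B_\tau$-measurable this passes to $\mathcal U$. On $[\tau,\infty)$, property (f) for $\mathcal S_{-\tau}Q_\omega\in\CC_{\mathrm{NS}}(\omega(\tau))$ shows, for $\mathcal U$-a.a. $\omega$, that $(\mathscr M(\bfphi)_t)_{t\geq\tau}$ is a square integrable $((\mathfrak B_t)_{t\geq\tau},\mathcal U|_{\mathfrak B_\tau}^\omega)$-martingale with quadratic variation $(\mathscr N(\bfphi)_t)_{t\geq\tau}$; the integrability requirement $\E^{\mathcal U}\big[\E^{\mathcal U|_{\mathfrak B_\tau}^\cdot}[\mathscr N(\bfphi)_t]\big]<\infty$ follows from the uniform energy moment estimates (Remark~\ref{rem:moments}), which propagate from the deterministic datum $y\in Y$. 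Proposition~\ref{prop:B1} then yields that $\mathscr M(\bfphi)$ is a square integrable $((\mathfrak B_t)_{t\geq0},\mathcal U)$-martingale with quadratic variation $\mathscr N(\bfphi)$.

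Point (g) is the main obstacle. Decompose $\mathscr S^n=\alpha^n-\beta^n$ as in the proof of Proposition~\ref{prop:4.4}, with $\alpha^n$ left lower semi-continuous, $\beta^n$ non-decreasing, and $\E^{\mathcal U}[\alpha^n_t+\beta^n_t]<\infty$ by the moment bounds. The point here, as in \cite{FlaRom}, is that $\mathscr S^n_t$ for $t>\tau$ is not determined by the trajectory on $[\tau,\infty)$ alone; one writes $\mathscr S^n_t=c+\widehat{\mathscr S}^n_t$, where $c=\mathscr S^n_\tau-\tfrac1n\mathcal E^n_\tau$ is $\mathfrak B_\tau$-measurable (hence fixed once we condition at $\tau$) and $\widehat{\mathscr S}^n_t=\tfrac1n\mathcal E^n_t+\int_\tau^t(\cdots)$ is a function of the trajectory on $[\tau,\infty)$ only. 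Property (g) for $\mathcal S_{-\tau}Q_\omega$ then says precisely that $(\widehat{\mathscr S}^n_t)_{t\geq\tau}$ is an almost sure supermartingale under $\mathcal U|_{\mathfrak B_\tau}^\omega$ with $\tau$ a regular time, whence, since $c$ is then deterministic, the same holds for $(\mathscr S^n_t)_{t\geq\tau}$; Proposition~\ref{prop:B2} then gives that $(\mathscr S^n_t)_{t\geq\tau}$ is an almost sure $\mathcal U$-supermartingale. Finally, for a regular time $s<\tau$ the supermartingale inequality is checked by hand: for $A\in\mathfrak B_s$ it passes from $U$ when $t\leq\tau$, while for $t>\tau$ we condition on $\mathfrak B_\tau$ and use $\E^{\mathcal U|_{\mathfrak B_\tau}^\omega}[\mathscr S^n_t]\leq c+\E^{\mathcal U|_{\mathfrak B_\tau}^\omega}[\widehat{\mathscr S}^n_\tau]=\mathscr S^n_\tau(\omega)$ together with the regularity of $\tau$ for $U$ to obtain $\E^{\mathcal U}[\mathscr S^n_t\mathbf 1_A]\leq\E^{U}[\mathscr S^n_\tau\mathbf 1_A]\leq\E^{U}[\mathscr S^n_s\mathbf 1_A]=\E^{\mathcal U}[\mathscr S^n_s\mathbf 1_A]$. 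Taking $s=0$ and $A=\Omega$ (admissible since the initial law is $\delta_y$) shows moreover that $s=0$ is a regular time for $\mathcal U$. Hence (g) holds, $\mathcal U=U\otimes_\tau Q\in\CC_{\mathrm{NS}}(y)$, and the reconstruction property follows; apart from the bookkeeping just described the argument parallels the reconstruction step of \cite{FlaRom}.
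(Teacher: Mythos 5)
Your proposal is correct and follows essentially the same route as the paper: you verify points (a)--(g) of Definition \ref{D} for $U\otimes_\tau Q$, gluing the trajectory-wise conditions (a)--(e) across $\tau$ and invoking Proposition \ref{prop:B1} for the martingale property (f) and Proposition \ref{prop:B2} for the almost sure supermartingale property (g). Your explicit decomposition $\mathscr S^n_t=c+\widehat{\mathscr S}^n_t$ with $c$ being $\mathfrak B_\tau$-measurable, and the hand-checked crossing inequality for regular times $s<\tau<t$, merely spell out details the paper leaves implicit when it combines the two time intervals.
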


\begin{proof}
Let $y\in Y$ and $U \in \CC_{\mathrm{NS}}(y)$. Further let $T \geq 0$ be a regular time (i.e. a time at which the energy inequality in the sense of Definition \ref{D} (g) holds). We shall prove the following:
Let $Q_{\omega}:\Omega\to\mathrm{Prob}(\Omega^{[T,\infty)})$ be a $\mathfrak{B}_{T}$-measurable map such that there is $N\in\mathfrak{B}_{T}$ with $U(N)=1$ and for all $\omega\notin N$ it holds
$$
\omega(T)\in Y\quad \text{and} \quad \mathcal{S}_{-T}Q_{\omega}\in \CC_{\mathrm{NS}}(\omega(T));
$$
then $U\otimes_{T}Q\in \CC_{\mathrm{NS}}(y)$. So, we have to verify points (a)--(f) from Definition \ref{D} for $U\otimes_{T}Q$.
As in the proof
of Proposition \ref{prop:4.4} the crucial point is (b) and the rest follows along the lines of \cite[Lemma 4.4]{FlaRom}.\\
(a) As (a) holds for $Q_\omega$ we have $Q_\omega[S^T]=1$ (using the notation from the proof of Proposition \ref{prop:4.4}) such that
$$U\otimes_{T} Q[S_T\cap S^T]=\int_{S_T}Q_\omega[S^T]\,\dd P(\omega)=1.$$
Finally, with probability 1 we have that $\bfxi^3$ is continuous and hence weakly differentiable at time $T$ such that (a) follows.\\
(b) Due to the definition of $U\otimes_{T} Q$, the measure $Q_{\omega}$ is a regular conditional probability distribution of $U\otimes_{T}Q$ on $\mathfrak{B}_{T}$. Hence, it holds
$$
(U\otimes_{T}Q)(A\cap B)=\int_{A}Q_{\omega}(B)\dd U(\omega)
$$
for every two Borel sets $A\subset\Omega^{[0,T]}$ and $B\subset\Omega^{[T,\infty)}$.
Accordingly,
\begin{align*}
(U\otimes_{T}Q)(\bfxi^2=\xi^1\ups)&=(U\otimes_{T}Q)\Big(\bfxi^2|_{[0,T]}=(\xi^1\ups)|_{[0,T]},\bfxi^2|_{[T,\infty)}=(\xi^1\ups)|_{[T,\infty)}\Big)\\
&=\int_{\{\bfxi^2|_{[0,T]}=(\xi^1\ups)|_{[0,T]}\}}Q_{\omega}\Big(\bfxi^2|_{[T,\infty)}=(\xi^1\ups)|_{[T,\infty)}\Big)\dd U(\omega)\\
&=\int_{\{\bfxi^2|_{[0,T]}=(\xi^1\ups)|_{[0,T]}\}}\dd U(\omega)=1.
\end{align*}
This completes the proof of (b).\\
(c)  Using the notation from the proof of Proposition \ref{prop:4.4} we have
$$U\otimes_{T} Q[\mathfrak E_T\cap \mathfrak E^T]=\int_{\mathfrak E_T}Q_\omega[\mathfrak E^T]\,\dd U(\omega)=1$$
due to $Q_\omega(\mathfrak E^T)=1$ by (c) for $Q_\omega$. Consequently, we have $\mathcal E\in L^\infty_{\mathrm{loc}}(0,\infty)$ $U\otimes_{T} Q-$a.s.
\\
(d) Using the notation from the proof of Proposition \ref{prop:4.4} we have
$$U\otimes_{T} Q[R_T\cap R^T]=\int_{R_T}Q_\omega[R^T]\,\dd U(\omega)=1$$
due to $Q_\omega(R^T)=1$ by (d) for $Q_\omega$.\\
(e) follows by exactly the same reasoning as in (d).\\
(f) As (d) holds for $Q_\omega$ we know that $(\mathscr M_t(\bfphi))_{t\geq T}$ is a $((\mB_t)_{t\geq T},Q_\omega)$-square integrable martingale for all $\bfphi\in C^1(\mt;R^N)$. By Proposition \ref{prop:B1} we obtain that $(\mathscr M_t(\bfphi))_{t\geq T}$ is a $((\mB_t)_{t\geq T},U\otimes_{T} Q)_{t\geq T}$-square integrable martingale as well. Since $U$ and $U\otimes_T Q$ coincide on $\BTm$ and $(\mathscr M_t(\bfphi))_{0\leq t\leq T}$ is a
 $((\mB_t)_{0\leq t\leq T},U)$-martingale (as $U$ satisfies (f)) we conclude that $(\mathscr M_t(\bfphi))_{t\geq 0}$ is a $((\mB_t)_{t\geq 0},U\otimes_{T} Q)$ is a martingale.\\
(g) We use again the notation from the proof of Proposition \ref{prop:4.4}
and recall that $\mathscr S^n_\tau$ is left lower semi-continuous.
As (g) holds for $Q_\omega$ we know that $(\mathscr S_t^n)_{t\geq T}$ is an almost sure $((\mB_t)_{t\geq T},Q_\omega)$-supermartingale. By Proposition \ref{prop:B2} we obtain that $(\mathscr S^n_t)_{t\geq T}$ is an almost sure $((\mB_t)_{t\geq T},U\otimes_{T} Q)$-supermartingale as well. Since $U$ and $U\otimes_T Q$ coincide on $\BTm$ and $(\mathscr S_t^n)_{0\leq t\leq T}$ is an almost sure
 $((\mB_t)_{0\leq t\leq T},U)$-supermartingale (as $U$ satisfies (g)) we conclude that $(\mathscr S_t^n)_{t\geq0}$ is an almost sure $((\mB_t)_{t\geq 0},U\otimes_{T} Q)$-supermartingale.
\end{proof}


\begin{thebibliography}{}
\bibitem{BFR} D. Bl\"omker, F. Flandoli, M. Romito: \emph{Markovianity and ergodicity for a surface growth PDE.} Ann. Probab. 37.1, 275--313. (2009)
\bibitem{BFH2} D. Breit, E. Feireisl, M. Hofmanov\'{a}: \emph{Compressible fluids driven by stochastic forcing: The relative energy inequality and applications.} Commun. Math. Phys. 350, 443--473. (2017)
\bibitem{BFH18} D. Breit, E. Feireisl, M. Hofmanov\'a: {\em Local strong solutions to the stochastic compressible Navier-Stokes system,}   Comm. PDE. 43 (2), 313--345. (2018)
\bibitem{BFHbook} D. Breit, E. Feireisl, M. Hofmanov\'a: \emph{Stochastically forced compressible fluid flows.} De Gruyter Series in Applied and Numerical Mathematics. De Gruyter, Berlin/Munich/Boston. (2018)
\bibitem{BFHM} D. Breit, E. Feireisl,  M. Hofmanov\'{a}, B. Maslowski: \emph{Stationary solutions to the compressible Navier--Stokes system driven by stochastic forces.}
To appear in Probab. Theory Relat. Fields. DOI:10.1007/s00440-018-0867-4, Preprint at arXiv:1703.03177v1
\bibitem{BH} D. Breit, M. Hofmanov\'{a}: \emph{Stochastic Navier--Stokes equations for compressible fluids.} Indiana Univ. Math. J.  65, 1183--1250. (2016)
\bibitem{CGT} M.  Coti  Zelati,  N.  Glatt-Holtz,  K.  Trivisa:
\emph{Invariant  measures  for  the  stochastic  one-dimensional compressible Navier--Stokes equations.}
arXiv: 1802.04000
\bibitem{DaPDe1}
G.~Da Prato, A.~Debussche:
\emph{Ergodicity for the 3D stochastic Navier--Stokes equation.}
J. Math. Pures Appl.  82, 877--947. (2003)
\bibitem{DeOd} A. Debussche, C. Odasso. \emph{Markov solutions for the 3D stochastic Navier--Stokes equations with state dependent noise.} Journal of Evolution Equations 6.2, 305--324. (2006)
\bibitem{daprato} G. Da Prato, J. Zabczyk, Stochastic Equations in Infinite Dimensions, Encyclopedia Math. Appl., vol. 44, Cambridge University Press, Cambridge. (1992)
\bibitem{FNP} E. Feireisl, A. Novotn\'y, H. Petzeltov\'a: \emph{On the existence of globally defined weak solutions to the Navier-Stokes equations.} J. Math. Fluid. Mech. 3, 358--392. (2001)
\bibitem{FlaRom}
F.~Flandoli, M.~Romito:
\emph{Markov selections for the 3{D} stochastic {N}avier--{S}tokes
  equations.} Probab. Theory Relat. Fields 140, 407--458. (2008)
\bibitem{GRZ} B. Goldys, M R\"ockner, X. Zhang:
\emph{Martingale solutions and Markov selections for
stochastic partial differential equations.} Stoch. Pr. Appl. 119, 1725--1764. (2009)

\bibitem{FR2} F. Flandoli, M. Romito: \emph{Regularity of transition semigroups associated to a 3D stochastic Navier--Stokes equation}. Stochastic Differential Equations: Theory And Applications: A Volume in Honor of Professor Boris L Rozovskii. 263--280. (2007)

\bibitem{jakubow} A. Jakubowski.
\newblock The almost sure Skorokhod representation for subsequences in nonmetric spaces.
\newblock Teor. Veroyatnost. i Primenen 42 (1997), no. 1, 209-216; translation in Theory Probab. Appl. 42 (1997), no. 1, 167--174. (1998)
\bibitem{Kr} N. V. Krylov: \emph{The selection of a  Markov process from a Markov system of processes, and the construction of quasidiffusion processes (in Russian)}. Izv. Akad. Nauk SSSR Ser. Mat. 37, 691--708. (1973)

\bibitem{Li2} P.\,L. Lions: \emph{Mathematical topics in fluid mechanics. Vol. 2. Compressible models.} Oxford Science Publications, The Clarendon Press, Oxford University Press, New York. (1998)


\bibitem{StVa} D. W. Stroock, S. R.S. Varadhan: \emph{Multidimensional Diffusion Processes}. Springer, Berlin. (1979)


\end{thebibliography}
\end{document}